\theoremstyle{plain}
\newtheorem{Thm}{Theorem}[section]
\theoremstyle{definition}
\newtheorem{Prop}[Thm]{Proposition}
\newtheorem{Lem}[Thm]{Lemma}
\newtheorem{Cor}[Thm]{Corollary}
\newtheorem{Def}[Thm]{Definition}
\theoremstyle{remark}
\newtheorem{Rem}[Thm]{Remark}
\newcommand{\R}{\mathbb{R}}
\newcommand{\C}{\mathbb{C}}
\newcommand{\N}{\mathbb{N}}
\newcommand{\re}{\mathop{\mathrm{Re}}}
\newcommand{\im}{\mathop{\mathrm{Im}}}
\newcommand{\Arg}{\mathop{\mathrm{Arg}}}
\newcommand{\Dom}{\textsf{Dom}}
\newcommand{\Ran}{\textsf{R}}
\newcommand{\Nul}{\textsf{N}}
\newcommand{\lint}{\mathop{\lrcorner}}
\newcommand{\ext}{\mathop{\scriptstyle{\wedge}}}
\newcommand\ca{{\bf 1}}
\newcommand\cB{\mathcal{B}}
\newcommand\cD{\mathcal{D}}
\newcommand\cL{\mathcal{L}}
\newcommand\cQ{\mathcal{Q}}
\newcommand\cS{\mathcal{S}}
\newcommand\cV{\mathcal{V}}
\newcommand{\sppt}{\mathop{\mathrm{sppt}}}
\newcommand{\Div}{\mathop{\mathrm{div}}}
\newcommand{\svt}[1]{{\substack{\vspace{.025cm} \\ #1}}} 
\numberwithin{equation}{section}
\begin{document}
\title[Calder\'{o}n Reproducing Formulas ]{Calder\'{o}n Reproducing Formulas and \\Applications to Hardy Spaces}
\author[Auscher]{Pascal Auscher}
\author[McIntosh]{Alan McIntosh}
\author[Morris]{Andrew J. Morris}

\address{Pascal Auscher\\Universit\'{e} de Paris-Sud\\Laboratoire de Math\'{e}matiques\\{UMR} du {CNRS} 8628\\91405 Orsay Cedex\\France}
\email{Pascal.Auscher@math.u-psud.fr}

\address{Alan McIntosh\\Centre for Mathematics and its Applications\\Mathematical Sciences Institute\\Australian National University\\Canberra\\ACT 0200\\Australia}
\email{Alan.McIntosh@anu.edu.au}

\address{Andrew J. Morris\\Mathematical Institute\\University of Oxford\\Oxford\\OX1 3LB\\UK}
\email{Andrew.Morris@maths.ox.ac.uk}

\subjclass[2010]{Primary: 42B30; Secondary: 35F35, 35R01, 47B44, 47A60, 58J05}
\keywords{Calder\'{o}n reproducing formula, Hardy space embedding, self-adjoint operator, finite propagation speed, sectorial operator, off-diagonal estimate, first-order differential operator, Hodge--Dirac operator, divergence form elliptic operator, Riemannian manifold} 

\date{26 March 2013}

\begin{abstract}
We establish new Calder\'{o}n reproducing formulas for self-adjoint operators $D$ that generate strongly continuous groups with finite propagation speed. These formulas allow the analysing function to interact with $D$ through holomorphic functional calculus whilst the synthesising function interacts with $D$ through functional calculus based on the Fourier transform. We apply these to prove the embedding $H^p_D(\wedge T^*M) \subseteq L^p(\wedge T^*M)$, $1\leq p\leq 2$, for the Hardy spaces of differential forms introduced by Auscher, McIntosh and Russ, where $D=d+d^*$ is the Hodge--Dirac operator on a complete Riemannian manifold $M$ that has polynomial volume growth. This fills a gap in that work. The new reproducing formulas also allow us to obtain an atomic characterisation of $H^1_D(\wedge T^*M)$. The embedding $H^p_L \subseteq L^p$, $1\leq p\leq 2$, where $L$ is either a divergence form elliptic operator on~$\R^n$, or a nonnegative self-adjoint operator that satisfies Davies--Gaffney estimates on a doubling metric measure space, is also established in the case when the semigroup generated by the adjoint $-L^*$ is ultracontractive.
\end{abstract}

\maketitle

\tableofcontents

\section{Introduction and Main Results} \label{section: introduction}
The classical Hardy spaces $H^p(\R^n) \subseteq L^p(\R^n)$ provide a substitute for the $L^p(\R^n)$ scale of spaces on which homogeneous multipliers, such as the Riesz transforms $(R_ju)\, \widehat{\ }(\xi)=i{\xi_j}{|\xi|^{-1}}\widehat{u}(\xi)$ for $j\in\{1,\dots,n\}$, are bounded when $p\in[1,\infty)$. It is well known that $H^p(\R^n)=L^p(\R^n)$ when $p\in(1,\infty)$, whilst $H^1(\R^n) \subset L^1(\R^n)$, and that $H^1(\R^n)$ has an atomic characterisation and a molecular characterisation.

A variety of new Hardy spaces have been designed to obtain a similar theory for useful operators that do not belong to the standard Calder\'{o}n--Zygmund class. We are primarily motivated by the Hardy spaces of differential forms $H^p_D(\wedge T^*M)$ introduced by Auscher, McIntosh and Russ~\cite{AMcR}. We temporarily restrict our attention to these spaces, although the main content of the paper contains a more general theory that can be applied to a variety of the contexts considered elsewhere.

The $H^p_D(\wedge T^*M)$ spaces were designed for the analysis of the Hodge--Dirac operator $D=d+d^*$ and the Hodge--Laplacian $\Delta=D^2$, where $d$ and $d^*$ denote the exterior derivative and its adjoint, acting on the Hilbert space of square integrable differential forms $L^2(\wedge T^*M)$ over a complete Riemannian manifold $M$.  We will always assume that any such manifold $M$ is smooth and connected, and has doubling volume growth in the sense that there exist constants $A\geq1$ and $\kappa\geq0$ such that
\begin{equation}\tag{D$_\kappa$}\label{D1}
0<V(x,\alpha r)\leq A\alpha^{\kappa} V(x,r)<\infty\qquad \forall x\in M, \ \forall r>0, \ \forall \alpha\geq 1,
\end{equation}
where $V(x,r)$ is the Riemannian measure of the geodesic ball $B(x,r)$ in $M$ with centre $x$ and radius~$r$. These spaces were designed so that the geometric Riesz transform $D\Delta^{-1/2}$ is bounded on $H^p_D(\wedge T^*M)$ when $p\in[1,\infty]$, and a molecular characterisation was obtained for $H^1_D(\wedge T^*M)$.

One of the  aims  of this paper is to show that $H^p_D(\wedge T^*M) \subseteq L^p(\wedge T^*M)$ when $p\in[1,2]$. This result was stated in \cite[Corollary~6.3]{AMcR} but the proof contains a gap that we fill here. Another aim is to to show that $H^1_D(\wedge T^*M)$ has an atomic characterisation, thus strengthening the result in \cite[ Theorem~6.2]{AMcR}  that $H^1_D(\wedge T^*M)$ has a molecular characterisation.

We now outline the main ideas. The space $H^p_D(\wedge T^*M)$ is defined as a completion of a normed space $E^p_{D,\psi}(\wedge T^*M)$ associated with a suitably nondegenerate function $\psi$ from the set
\[
\Psi_\sigma^\tau(S_{\theta}^o) = \{\psi \in H^\infty(S_{\theta}^o\cup\{0\}): |\psi(z)| \lesssim \min\{|z|^{\sigma},|z|^{-\tau}\}\},
\]
for some $\sigma,\tau>0$, where $H^\infty(S_{\theta}^o\cup\{0\})$ denotes the algebra of bounded functions on $S_{\theta}^o\cup\{0\}$ that are holomorphic on the open bisector $S_\theta^o$ of angle $\theta\in (0,\pi/2)$ (see~\eqref{sector}). We shall not define $E^p_{D,\psi}(\wedge T^*M)$ precisely here except to mention that
\begin{equation}\label{E}
u \in E^p_{D,\psi} \quad \text{ if and only if } \quad  u= \int_0^\infty \psi_t(D) U_t \frac{dt}{t} \quad \text{ for some } U \in T^p \cap T^2,
\end{equation}
where $T^p=T^p((\wedge T^*M)_+)$ is an appropriate analogue of the tent space $T^p(\R_+^{n+1})$ introduced by Coifman, Meyer and Stein~\cite{CMS}, and $\psi_t(D)=\psi(tD)$ is defined by the holomorphic functional calculus of~$D$ (see Definition~\ref{AMcRDef5.1}).

There is an important distinction between \textit{a completion of $E^p_{D,\psi}$} and \textit{the completion of $E^p_{D,\psi}$ in $L^p$}. The former is unique up to isometric isomorphism and can always be constructed as an abstract space, whereas the latter is a unique subspace of $L^p$ that may or may not exist. See Section~\ref{section: notation} for further details. It was known previously that $E^p_{D,\psi} \subseteq L^p$ when $\psi$ has suitable decay at the origin and infinity, but this does not guarantee, nor was it proved, that the completion of $E^p_{D,\psi}$ in $L^p$ exists. Without this property, a completion of $E^p_{D,\psi}$ must be interpreted as an abstract space consisting of, for example, equivalence classes of Cauchy sequences in $E^p_{D,\psi}$ or elements of the second dual space $(E^p_{D,\psi})^{**}$. Although various realizations of such an abstract Hardy space were known, these were not shown to be contained in any function space. The approach of Hofmann, Mayboroda and McIntosh~\cite[Appendix 2]{HMMc}, for instance, can be used to realize the abstract Hardy space as a space of distributions adapted to~$D$.

We prove that the completion of $E^p_{D,\psi}(\wedge T^*M)$ in $L^p(\wedge T^*M)$ exists by utilizing the finite propagation speed of the $C_0$-group $(e^{itD})_{t\in\R}$ generated by the Hodge--Dirac operator $D$ on $L^2(\wedge T^*M)$. This provides a constant $c_D>0$ such that for all geodesic balls $B(x,r) \subseteq M$, all $u \in L^2(\wedge T^*M)$ with $\sppt(u)\subseteq B(x,r)$ and all $t \in \R$, it holds that $\sppt(e^{itD}u)\subseteq B(x,(1+c_D)r)$.

The main ideas of the argument are as follows. We use suitably nondegenerate Schwartz functions $\eta$ with compactly supported Fourier transform $\widehat{\eta}$ from the set
\[
\widetilde{\Psi}_{N}^\delta (\R) = \{\eta \in \cS(\R) : \sppt\widehat{\eta} \subseteq [-\delta,\delta] \text{ and }\partial^{k-1}\eta(0)=0 \text{ for all } k\in\{1,\ldots, N\}\},
\]
for some $\delta>0$ and $N\in\N$, to interact with the finite propagation speed of the group. We will see that for all $\eta \in \widetilde{\Psi}_{N}^\delta (\R)$ and all $u\in L^2(\wedge T^*M)$ with $\sppt(u)\subseteq B(x,r)$, it holds that $\sppt(\eta_t(D) u)\subseteq B(x,r+c_D\delta t)$, where $\eta_t(D)=\eta(tD)$ is defined by the Borel functional calculus of~$D$. This is in contrast with a function $\psi \in \Psi_\sigma^\tau(S_{\theta}^o)$, for which $\psi_t(D)u$ may be supported everywhere on $M$.

We incorporate the finite propagation speed into the existing theory by choosing $\psi \in \Psi(S^o_\theta)$ and $\eta \in \widetilde{\Psi}(\R)$ so that the following Calder\'{o}n reproducing formula holds:
\begin{equation}\label{rep}
\int_0^\infty \psi_t(D) \eta_t(D)u \frac{dt}{t} = \int_0^\infty \eta_t(D) \psi_t(D)u \frac{dt}{t} = u \qquad \forall u \in E^p_{D,\psi} \cup E^p_{D,\eta} .
\end{equation}
A comparison of \eqref{E} and \eqref{rep} shows that if $u\in E^p_{D,\eta}$ and ${\eta_t(D)u \in T^p \cap T^2}$, then $u\in E^p_{D,\psi}$. This principle allows us to prove that $E^p_{D,\psi} = E^p_{D,\eta}$ when the family of operators $(\psi_t(D) \eta_s(D))_{s,t\in(0,\infty)}$ has enough $L^2$ off-diagonal decay to control volume growth on the manifold. We then use the Sobolev embedding theorem for compact manifolds and standard energy estimates for the group $(e^{itD})_{t\in\R}$ to prove that the completion of $E^p_{D,\eta}$ in $L^p$ exists, hence the completion of $E^p_{D,\psi}$ in $L^p$ exists as well.

Let us remark that the connection between the classical Hardy spaces $H^p(\R^n)$ and the tent spaces $T^p(\R^{n+1}_+)$ was previously understood in terms of reproducing formulas analogous to~\eqref{rep} for convolution operators. In particular, Coifman, Meyer and Stein provided a short proof of the atomic characterisation of $H^p(\R^n)$ for $p\in(0,1]$ in \cite[Section 9b]{CMS} by using the theory of tent spaces and constructing a function $\phi \in C^\infty_c(\R^n)$ satisfying $\int x^\gamma\phi(x)\ dx = 0$ for all $\gamma\in[0,N_p]$ and some $N_p\in\N$ depending on $p$ such that
\[
\int_0^\infty \phi_{(t)} * \partial_t P_{(t)}*f  \ dt = f \qquad \forall f\in H^p(\R^n),
\]
where $P$ is the Poisson kernel and $P_{(t)}(x)=t^{-n}P(x/t)$. This is equivalent to 
\[
\int_0^\infty \widehat\phi(t\xi)(-2\pi t|\xi|)e^{-2\pi t|\xi|}\,\frac{dt}{t} = 1 \qquad \forall \xi\in\R^n \setminus\{0\},
\]
from which the analogy with \eqref{rep} is most apparent when $n=1$, since $\eta(x):=\widehat{\phi}(x)$ is in $\widetilde{\Psi}_{N_p+1}^\delta(\R)$ for some $\delta>0$, whilst $\psi(z):=\begin{cases}-2\pi z e^{-2\pi z}, & \text{if } \re(z)\geq 0 \\ 2\pi z e^{2\pi z}, & \text{if } \re(z)<0\end{cases}$ is in $\Psi_1^\tau(S^o_\theta)$ for all $\tau>0$ and $\theta\in(0,\pi/2)$.

After we establish the embedding $H^1_D(\wedge T^*M)\subseteq L^1(\wedge T^*M)$, the finite propagation speed of the group $(e^{itD})_{t\in\R}$ also allows us to obtain an atomic characterisation of~$H^1_D(\wedge T^*M)$. This builds on the molecular characterisation obtained in~\cite{AMcR}. The molecular space $H^1_{D,\text{mol}(N)}(\wedge T^*M)$ and the atomic space $H^1_{D,\text{at}(N)}(\wedge T^*M)$ are introduced in Definition~\ref{AMcRDef6.1}, where $N\in\N$ is the number of moment conditions satisfied by the molecules and atoms in the respective spaces.

The following theorem summarizes our results for the Hodge--Dirac operator.

\begin{Thm}\label{Thm: HpinLpIntro}
Suppose that $M$ is a complete Riemannian manifold satisfying~\eqref{D} and that $D=d+d^*$ is the Hodge--Dirac operator on $L^2(\wedge T^*M)$. If $p\in[1,2]$, $\theta\in(0,\pi/2)$, $\beta>\kappa/2$ and $\psi\in\Psi_{\beta}(S^o_\theta)$ is nondegenerate, then the completion $H^p_{D,\psi}(\wedge T^*M)$ of $E^p_{D,\psi}(\wedge T^*M)$ in $L^p(\wedge T^*M)$ exists. Moreover, if $N\!\in\!\N$ and $N\!>\!\kappa/2$, then $H^1_{D,\psi}(\wedge T^*M)=H^1_{D,\text{mol}(N)}(\wedge T^*M)=H^1_{D,\text{at}(N)}(\wedge T^*M)$.
\end{Thm}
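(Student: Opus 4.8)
The plan is to exploit the Calderón reproducing formula \eqref{rep} as the bridge that converts the abstract completion problem into a concrete one about functions supported in geodesic balls, and then to run the tent-space machinery with the off-diagonal estimates to identify this concrete completion. First I would fix a nondegenerate $\psi\in\Psi_\beta(S^o_\theta)$ with $\beta>\kappa/2$ and choose, by the Fourier-transform construction sketched after \eqref{rep} (the one-dimensional Coifman–Meyer–Stein-type computation), a companion $\eta\in\widetilde\Psi^\delta_N(\R)$ with $N>\kappa/2$ and a small enough $\delta>0$ so that $\int_0^\infty\psi_t(D)\eta_t(D)u\,\frac{dt}{t}=\int_0^\infty\eta_t(D)\psi_t(D)u\,\frac{dt}{t}=u$ holds on $E^p_{D,\psi}\cup E^p_{D,\eta}$. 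The key structural step is then to prove $E^p_{D,\psi}=E^p_{D,\eta}$ with equivalence of norms: given $u\in E^p_{D,\psi}$, write $u=\int_0^\infty\psi_t(D)U_t\,\frac{dt}{t}$ with $U\in T^p\cap T^2$; then $\eta_s(D)u=\int_0^\infty\eta_s(D)\psi_t(D)U_t\,\frac{dt}{t}$, and one shows that the operator $U\mapsto(s\mapsto\eta_s(D)\psi_t(D)U_t\ \text{integrated in }t)$ is bounded on $T^p\cap T^2$ using the $L^2$ off-diagonal decay of the two-parameter family $(\psi_t(D)\eta_s(D))_{s,t}$ combined with \eqref{D} — this is precisely the place where the hypothesis $\beta>\kappa/2$ (and symmetrically $N>\kappa/2$) is consumed, since the off-diagonal decay must beat the $\alpha^\kappa$ volume growth in the tent-space averaging. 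By \eqref{E} and \eqref{rep}, the resulting $V:=\eta_\bullet(D)u\in T^p\cap T^2$ witnesses $u\in E^p_{D,\eta}$, and symmetry gives the reverse inclusion, so the abstract completions coincide.

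Having reduced to $E^p_{D,\eta}$, I would then prove that the completion of $E^p_{D,\eta}$ in $L^p(\wedge T^*M)$ exists. Here the point is the finite-propagation-speed property recorded in the excerpt: $\sppt(\eta_t(D)u)\subseteq B(x,r+c_D\delta t)$ whenever $\sppt u\subseteq B(x,r)$. Together with the $T^p\cap T^2$ structure this lets one reduce, via a standard atomic decomposition of tent spaces (Coifman–Meyer–Stein applied to the $T^p$ norm, with $p=1$ the critical case), to showing that an individual "molecule" $b=\int_0^\infty\eta_t(D)A_t\,\frac{dt}{t}$ built from a $T^p$-atom $A$ supported in a tent over a ball $B(x,r)$ lies in $L^p$ with controlled norm. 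On such a tent the time variable $t$ is bounded by $r$, so $b$ is supported in a fixed dilate $B(x,(1+c_D\delta)r)$, a relatively compact set; on this compact piece one applies the Sobolev embedding theorem for compact manifolds together with the standard energy estimates $\|e^{itD}\|_{2\to2}\le 1$ (and their differentiated versions controlling $D^k e^{itD}$ against derivatives of data) to bound the relevant $L^p$-norm by the $L^2$-norm over the enlarged ball, which in turn is controlled by the $T^2$-atomic bound and $V(x,r)$. Summing the atomic pieces against \eqref{D} produces a genuine element of $L^p$, and the limit of any $E^p_{D,\eta}$-Cauchy sequence is then forced to live in $L^p$; hence $H^p_{D,\eta}(\wedge T^*M)=H^p_{D,\psi}(\wedge T^*M)$ exists as a subspace of $L^p$.

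For the second assertion with $p=1$, I would use the same molecular construction in the other direction. The molecular characterisation from \cite[Theorem~6.2]{AMcR} already gives $H^1_{D,\psi}=H^1_{D,\text{mol}(N)}$ once $H^1_{D,\psi}\subseteq L^1$ is known, since the molecules produced from $T^1$-atoms via $\eta_\bullet(D)$ (or $\psi_\bullet(D)$) are exactly of the required type and the $T^1$-atomic decomposition reproduces every element of $H^1$. The new content is the atomic characterisation: the inclusion $H^1_{D,\text{at}(N)}\subseteq H^1_{D,\text{mol}(N)}$ is immediate because an atom is a molecule, so one must show that every $H^1_{D,\psi}$ element has an atomic decomposition. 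This is where finite propagation speed is essential and where I expect the main obstacle to lie: one takes a $T^1$-atom $A$ over $B(x,r)$ and needs $\int_0^\infty\eta_t(D)A_t\,\frac{dt}{t}$ to be not merely a molecule but (a fixed multiple of) a genuine $D$-atom — supported in a controlled dilate of $B(x,r)$, which the bound $t\le r$ on the tent plus $\sppt(\eta_t(D)u)\subseteq B(x,r+c_D\delta t)$ delivers, satisfying the $N$ moment conditions, which follows because $\eta\in\widetilde\Psi^\delta_N$ vanishes to order $N$ at the origin so $\eta_t(D)A_t\in\overline{\Ran(D^N)}$ in the appropriate sense, and obeying the size bound, which again comes from the compact-support Sobolev/energy estimate of the previous paragraph. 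Assembling these with care about which range space ($\overline{\Ran(D)}$ versus the full space, and the splitting into $d$- and $d^*$-parts on forms) the moment conditions are interpreted in is the delicate bookkeeping; once it is in place, summing atoms gives $H^1_{D,\psi}\subseteq H^1_{D,\text{at}(N)}$ and the three spaces coincide.
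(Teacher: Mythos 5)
Your first step (constructing the companion $\eta\in\widetilde\Psi_N(\R)$ via the Fourier reproducing formula and proving $E^p_{D,\psi}=E^p_{D,\eta}$ using the off-diagonal decay of the two-parameter family $(\psi_t(D)\eta_s(D))_{s,t}$ together with doubling) is sound and matches what the paper does in Corollary~\ref{Cor: Repr} and Propositions~\ref{Prop: MainEst}--\ref{Prop: AMcRLem5.2FPS}. Your third step (showing that $\cS^D_\eta A$ is, up to a constant, a genuine $H^1_D$-atom with support control from finite propagation speed and moment conditions from $\eta\in\widetilde\Psi_N(\R)$) likewise matches the paper's Theorem~\ref{AMcRThm6.2FPS}.

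The second step contains the real gap, and it is exactly the one the paper is written to fill. What you prove there is that $E^p_{D,\eta}\subseteq L^p$ with uniformly controlled norms on molecules built from $T^p$-atoms, and you conclude from this that ``the limit of any $E^p_{D,\eta}$-Cauchy sequence is forced to live in $L^p$; hence $H^p_{D,\eta}$ exists as a subspace of $L^p$.'' But the paper emphasises (see the paragraph before Theorem~\ref{Thm: H1injectivity}, and Remark~\ref{Rem: Emol}) that the continuous inclusion $E^p_{D,\eta}\subseteq L^p$ was already known from the molecular estimate, and it does \emph{not} by itself imply that the completion in $L^p$ exists. The obstruction is Proposition~\ref{Prop: relativecomp}(3): one must show that any $E^p_{D,\eta}$-Cauchy sequence that converges to $0$ in $L^p$ must converge to $0$ in $E^p_{D,\eta}$ as well; equivalently, the extension $\widetilde I$ of the inclusion from the abstract completion into $L^p$ must be injective. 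Your atomic-reconstruction argument realises a limit in $L^p$ but never addresses this injectivity. The paper's actual mechanism is a duality argument: with $\cQ^D_\eta u_n\to U$ in $T^p$ and $u_n\to 0$ in $L^p$, one tests $U$ against elements $F$ of a weak-star dense subset $\mathscr{E}\subseteq T^{p'}$ for which $\cS^D_\eta F\in L^{p'}$, writes $\langle\cQ^D_\eta u_n,F\rangle_{T^2}=\langle u_n,\cS^D_\eta F\rangle_{L^2}\to 0$, and deduces $U=0$. The Sobolev embedding and the group energy estimates that you invoke are used in the paper precisely to exhibit such a dense class $\mathscr{E}$ (by showing $\cS^D_\eta(C^\infty_c(\cV_+))\subseteq L^\infty$), not to bound $L^p$-norms of molecules; indeed, a $T^1$-atom $A$ is only in $L^2_{\mathrm{loc}}$ in the space variable, so $A_t\notin W^{k,2}$ in general and the Sobolev estimate~\eqref{sobolev} cannot be applied directly to $\eta_t(D)A_t$ the way you propose. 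Fixing your argument requires importing the weak-star density mechanism of Theorem~\ref{Thm: H1injectivityFPS} and verifying its hypothesis~\ref{SFinLqFPS} with $q=1$, which is where Sobolev/energy estimates genuinely enter; without that step, the assertion that $H^p_{D,\psi}$ exists as a subspace of $L^p$ is unjustified.
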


The Hardy space $H^p_{D,\psi}(\wedge T^*M)$ in Theorem~\ref{Thm: HpinLpIntro} is thus the set of all $u$ in $L^p(\wedge T^*M)$ for which there exists a Cauchy sequence $(u_n)_n$ in $E^p_{D,\psi}(\wedge T^*M)$ that converges to $u$ in $L^p(\wedge T^*M)$, together with the norm $\|u\|_{H^p_{D,\psi}} = \lim_n\|u_n\|_{E^p_{D,\psi}}$. The embedding $H^p_{D,\psi}(\wedge T^*M) \subseteq L^p(\wedge T^*M)$ is then automatic. The comments below Definition~\ref{Def: relcompletion} contain more details. 

The results obtained here can also be applied to Hardy spaces designed for higher order operators. In particular, consider the Hardy spaces $H^p_{L,\psi}(\R^n)$ introduced by Hofmann, Mayboroda and McIntosh~\cite{HMMc} for the analysis of divergence form operators $L = -\Div A \nabla = -\sum_{j,k=1}^n\partial_j A_{jk}\partial_k$, acting on $L^2(\R^n)$ and interpreted in the usual weak sense via a sesquilinear form, where $A=(A_{jk}) \in L^\infty(\R^n,\cL(\C^n))$ is elliptic in the sense that there exists $\lambda>0$ such that 
\begin{equation}\label{ellip}
\re \langle A(x)\zeta,\zeta\rangle_{\C^n}\geq\lambda|\zeta|^2 \qquad \forall \zeta\in\C^n, \ \text{ a.e. } x\in\R^n.
\end{equation}
There exists $\omega_L\in[0,\pi/2)$ such that $L$ is $\omega_L$-sectorial, hence $-L$ and $-L^*$ generate analytic semigroups $(e^{-tL})_{t>0}$ and $(e^{-tL^*})_{t>0}$ on $L^2(\R^n)$. In order to embed $H^p_{L,\psi}(\R^n)$ in $L^p(\R^n)$ when $1\leq p\leq 2$, we assume that there exists $g\in L^{ 2}_{\text{loc}}((0,\infty))$ such that
\begin{equation}\label{Gnew}
\|e^{-tL^*}u\|_\infty \leq g(t) \|u\|_2 \qquad \forall u\in L^2(\R^n).
\end{equation}

Let us remark that \eqref{Gnew} is equivalent to the action of the semigroup $(e^{-tL})_{t>0}$ from $L^1(\R^n)$ to $L^2(\R^n)$ (it is usually called ultracontractivity). Hence, this action of the semigroup on $L^1(\R^n)$ suffices to obtain $H^1_{L,\psi}(\R^n)$ as a subspace of $L^1(\R^n)$ in Theorem~\ref{Thm: HpinLpIntroL} below.

Let us also remark that \eqref{Gnew} is immediate when the semigroup $(e^{-tL^*})_{t>0}$ has a kernel $(K_t(\cdot,\cdot))_{t>0}$ defined pointwise almost everywhere on $\R^n\times\R^n$ with the property that for each $T>0$, there exist constants $C_T,c_T>0$ such that
\begin{equation}\label{Gloc}
|K_t(x,y)| \leq C_T t^{-n/2} e^{-c_T|x-y|^2/t} \qquad \forall x,y\in\R^n,\ \forall t\in(0,T].
\end{equation}
In fact, property \eqref{Gnew} is usually obtained as a step toward proving \eqref{Gloc}. For example, the local Gaussian estimates in~\eqref{Gloc} hold when, in addition to  having  $A$ bounded and elliptic, $A$  is uniformly continuous  (see \cite[Theorem~4.8]{AuscherJLMS1996}) or belongs to ~\textit{VMO} or has small \textit{BMO}~norm (see \cite[Chapter~1]{AT}).

The following theorem is essentially known when \eqref{Gloc} holds (see the remark below Proposition~9.1 in \cite{HMMc}). We provide a short proof when \eqref{Gnew} holds as an application of our techniques. 

\begin{Thm}\label{Thm: HpinLpIntroL}
Suppose that $A \in L^\infty(\R^n,\cL(\C^n))$ is elliptic and that $L=-\Div A \nabla$ on $L^2(\R^n)$ satisfies  \eqref{Gnew}. If $p\in[1,2]$, $\theta\in(\omega_L,\pi/2)$, $\beta>n/4$ and $\psi\in\Psi_\beta(S^o_\theta)$ is nondegenerate, then the completion $H^p_{L,\psi}(\R^n)$ of $E^p_{L,\psi}(\R^n)$ in $L^p(\R^n)$ exists. Moreover, if $N\in\N$ and $N>n/4$, then $H^1_{L,\psi}(\R^n)=H^1_{L,\text{mol}(N)}(\R^n)$,  and when $A$ is self-adjoint, then also $H^1_{L,\psi}(\R^n)=H^1_{L,\text{at}(N)}(\R^n)$.
\end{Thm}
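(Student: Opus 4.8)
The plan is to run the machinery behind Theorem~\ref{Thm: HpinLpIntro} on $\R^n$, with the self-adjoint Hodge--Dirac operator and its finite propagation speed replaced by the sectorial operator $L$ and the ultracontractivity hypothesis~\eqref{Gnew}. First I would record the structural facts that put $L$ into the framework: $\R^n$ satisfies~\eqref{D1} with homogeneous dimension $\kappa=n$; $L$ is $\omega_L$-sectorial with a bounded holomorphic functional calculus on every sector $S^o_\theta$, $\theta\in(\omega_L,\pi/2)$; and, since $A$ is bounded and elliptic, the semigroup $(e^{-tL})_{t>0}$, hence every $\psi_t(L)$ with $\psi\in\Psi(S^o_\theta)$, satisfies Davies--Gaffney $L^2$ off-diagonal estimates. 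The threshold $\beta>n/4$ is the exact analogue of $\beta>\kappa/2$ in Theorem~\ref{Thm: HpinLpIntro}, with the homogeneity doubled because $L$ is second order: the substitution $z\mapsto\psi(z^2)$ trades $tL$ for $(\sqrt{t}\,\sqrt{L})^2$, so $\psi\in\Psi_\beta(S^o_\theta)$ corresponds to $\tilde\psi(w):=\psi(w^2)\in\Psi_{2\beta}(S^o_{\theta/2})$, with $2\beta>n/2$.

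The decisive step is to show that the completion of $E^p_{L,\psi}(\R^n)$ in $L^p(\R^n)$ exists. By the general criterion for such completions (see the discussion around Definition~\ref{Def: relcompletion}), it is enough to exhibit a Hausdorff topological vector space $\cX$ with continuous, compatible embeddings of both $E^p_{L,\psi}(\R^n)$ and $L^p(\R^n)$ into $\cX$; the abstract completion of $E^p_{L,\psi}(\R^n)$ then embeds into $\cX$, and its preimage of $L^p(\R^n)$ is the required subspace. In the Hodge--Dirac setting $\cX$ is effectively $L^2_{\text{loc}}$, reached by using finite propagation of $(e^{itD})_{t\in\R}$ together with the Sobolev embedding to turn compactly supported pieces of $E^p$ into square integrable functions, where the clean identity $H^2=L^2$ applies. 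Here I would use~\eqref{Gnew} in its place: ultracontractivity of $(e^{-tL^*})_{t>0}$ is equivalent to the boundedness $e^{-tL}\colon L^1(\R^n)\to L^2(\R^n)$, so applying $e^{-sL}$, $s>0$, carries $E^p_{L,\psi}(\R^n)\subseteq L^p(\R^n)$ into $L^2(\R^n)=H^2_{L,\psi}(\R^n)$ --- continuously in the $E^p$-norm, once the integral representation~\eqref{E} is combined with the Davies--Gaffney bounds and the tent space estimates --- and, since $L$ is injective on $L^2(\R^n)$, the family $(e^{-sL})_{s>0}$ separates points, so one may take $\cX=\prod_{s>0}L^2(\R^n)$. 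This produces $H^p_{L,\psi}(\R^n)$ as a subspace of $L^p(\R^n)$, and the embedding $H^p_{L,\psi}(\R^n)\subseteq L^p(\R^n)$, for all $p\in[1,2]$.

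For the molecular characterisation when $p=1$ I would invoke the isomorphism $H^1_{L,\psi}(\R^n)\cong H^1_{L,\text{mol}(N)}(\R^n)$, $N>n/4$, established for divergence form operators by Hofmann, Mayboroda and McIntosh~\cite{HMMc}. Its molecules lie in $L^1(\R^n)$, so once $H^1_{L,\psi}(\R^n)$ has been realized above as a subspace of $L^1(\R^n)$ with matching norm, this isomorphism becomes an identification of closed subspaces of $L^1(\R^n)$, giving $H^1_{L,\psi}(\R^n)=H^1_{L,\text{mol}(N)}(\R^n)$.

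Self-adjointness enters only for the atomic characterisation, and that is where the main obstacle lies. If $A$ is self-adjoint, then $L$ is nonnegative and self-adjoint, $\sqrt{L}$ is self-adjoint, and the Davies--Gaffney estimates for $(e^{-tL})_{t>0}$ imply that the group $(e^{it\sqrt{L}})_{t\in\R}$ has finite propagation speed (equivalently, the wave operators $\cos(t\sqrt{L})$ propagate at unit speed). Writing $\psi_t(L)=\tilde\psi(\sqrt{t}\,\sqrt{L})$ with $\tilde\psi(w)=\psi(w^2)\in\Psi_{2\beta}(S^o_{\theta/2})$ even and $2\beta>n/2$, the operator $\sqrt{L}$ now falls squarely under the finite-propagation framework of Theorem~\ref{Thm: HpinLpIntro}: choosing a suitably nondegenerate Schwartz function $\eta\in\widetilde{\Psi}(\R)$ with $\sppt\widehat{\eta}$ compact and sufficiently many vanishing moments at the origin, the operators $\eta_t(\sqrt{L})$ propagate at finite speed, the Calder\'{o}n reproducing formula~\eqref{rep} holds with $\sqrt{L}$ in place of $D$, and the refinement of molecules into compactly supported atoms proceeds exactly as in the proof of Theorem~\ref{Thm: HpinLpIntro}, giving $H^1_{L,\psi}(\R^n)=H^1_{L,\text{at}(N)}(\R^n)$. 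For non-self-adjoint $A$ no such finite-propagation functional calculus is available --- $\sqrt{L}$ is not self-adjoint, so the compactly supported Fourier transform trick has no substitute --- which is precisely why the atomic characterisation is not claimed in that case, even though ultracontractivity still delivers the $L^1$ embedding and the molecular characterisation. The genuinely hard part of the argument is therefore the second paragraph: making~\eqref{Gnew} do, quantitatively and uniformly enough in the parameters $s$ and $t$ to control the tent space integrals, what finite propagation does in the Hodge--Dirac setting.
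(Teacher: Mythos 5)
The decisive step in your second paragraph does not go through as written. Exhibiting a Hausdorff topological vector space $\cX$ into which both $E^p_{L,\psi}(\R^n)$ and $L^p(\R^n)$ embed compatibly does \emph{not} imply that the abstract completion of $E^p_{L,\psi}(\R^n)$ embeds into $\cX$: the extension of a continuous injection to the completion exists, but its injectivity is precisely the point at issue (this is condition~(2) of Proposition~\ref{Prop: relativecomp}). Worse, your specific choice $\cX=\prod_{s>0}L^2(\R^n)$ makes the argument circular. Each map $T_s\colon\cH\to L^2$ you obtain by extending $u\mapsto e^{-sL}u$ from $E^p_{L,\psi}$ to the abstract completion $\cH$ factors through the canonical map $\widetilde I\colon\cH\to L^p$, because $e^{-sL}\colon L^p\to L^2$ is bounded and the two compositions agree on the dense subspace $E^p_{L,\psi}$. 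Hence $(T_s h)_{s>0}=0$ iff $e^{-sL}(\widetilde Ih)=0$ for all $s$, iff $\widetilde Ih=0$, so the injectivity of $\cH\to\prod_{s>0}L^2$ is \emph{equivalent} to the injectivity of $\widetilde I$ --- which is exactly the claim you are trying to prove. The missing ingredient is the tent-space duality argument: one must exploit that a Cauchy sequence $(u_n)$ in $E^p_{L,\psi}$ makes $\cQ^L_{\tilde\psi}u_n$ Cauchy in $T^p$, with limit $U$, and then show $U=0$ by pairing against $F\in T^{p'}$ with $\cS^{L^*}_{\tilde\psi^*}F\in L^{p'}$. The role of the ultracontractivity hypothesis~\eqref{Gnew} is precisely to verify hypothesis~\ref{SFinLq}, namely that with $\tilde\psi(z)=ze^{-z}$ one has $\cS^{L^*}_{\tilde\psi^*}(T^2_c)\subseteq L^\infty$ (estimate~\eqref{H4est}), so that such $F$ are weak-star dense in $T^{p'}$. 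That is the content of Theorems~\ref{Thm: H1injectivity} and~\ref{Thm: HpinLpLNEW}, and it is not reproduced by your separation-of-points argument.

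A secondary issue: in your molecular paragraph you cannot simply cite the abstract isomorphism $H^1_{L,\psi}\cong H^1_{L,\text{mol}(N)}$ from~\cite{HMMc} and declare it an equality of subspaces of $L^1(\R^n)$. Two subsets of $L^1$ can be abstractly isomorphic without coinciding; one must check that the isomorphism is the identity on the dense set $E^1_{L,\psi}$, and also that $H^1_{L,\psi}\cap L^2=E^1_{L,\psi}$ (which the paper proves in Theorem~\ref{Thm: H1injectivity} and then feeds into Theorem~\ref{AMcRThm6.2}). Your setup, first paragraph, and the self-adjoint atomic paragraph are otherwise in line with the paper's Sections~\ref{section: ApplicationsI} and~\ref{section: HLMMY}.
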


A theory of Hardy spaces was developed by Hofmann, Lu, Mitrea, Mitrea and Yan~\cite{HLMMY} for nonnegative self-adjoint operators $L$ satisfying Davies--Gaffney estimates (see \eqref{DG}) on doubling metric measure spaces $M$. For example, when $A$ is self-adjoint, then $L=-\Div A \nabla$ has these properties. The framework developed here provides an embedding for these spaces when $L$ acts on a vector bundle $\cV$ over $M$, as defined in Section~\ref{section: notation}, and there exists $g\in L^{ 2}_{\text{loc}}((0,\infty))$ such that
\begin{equation}\label{GnewSA}
\|e^{-tL}u\|_\infty \leq g(t) \|u\|_2 \qquad \forall u\in L^2(\cV).
\end{equation}
In this context, since $L$ is self-adjoint, it is well known that \eqref{GnewSA} is equivalent to pointwise kernel estimates for the semigroup $(e^{-tL})_{t>0}$ (see \cite[Lemma~2.1.2]{Davies1989}).

\begin{Thm}\label{Thm: HpinLPforHLMMY}
Suppose that $M$ is a doubling metric measure space satisfying~\eqref{D} and that $L$ is a nonnegative self-adjoint operator on $L^2(\cV)$ satisfying Davies--Gaffney estimates and \eqref{GnewSA}. If $p\in[1,2]$, $\theta\in(0,\pi/2)$, $\beta>\kappa/4$ and $\psi\in\Psi_\beta(S^o_\theta)$ is nondegenerate, then the completion $H^p_{L,\psi}(\cV)$ of $E^p_{L,\psi}(\cV)$ in $L^p(\cV)$ exists. Moreover, if $N\in\N$ and $N>\kappa/4$, then $H^1_{L,\psi}(\cV)=H^1_{L,\text{mol}(N)}(\cV)=H^1_{L,\text{at}(N)}(\cV)$.
\end{Thm}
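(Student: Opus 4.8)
The plan is to mirror, in the metric-measure and vector-bundle setting, the argument already outlined for the Hodge--Dirac operator, replacing the Hodge--Dirac operator $D$ by $L^{1/2}$ so that the Calder\'on reproducing formula \eqref{rep} can be applied to $L$ through $L^{1/2}$. The key point is that since $L$ is nonnegative and self-adjoint, $L^{1/2}$ is also nonnegative self-adjoint, and the Davies--Gaffney estimates for $(e^{-tL})_{t>0}$ are well known to be equivalent to the \emph{finite propagation speed} of the $C_0$-group $(e^{itL^{1/2}})_{t\in\R}$: there is $c_L>0$ so that $\sppt(e^{itL^{1/2}}u)\subseteq B(x,r+c_L|t|)$ whenever $\sppt u\subseteq B(x,r)$. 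Thus $L^{1/2}$ plays exactly the role that $D$ plays in Theorem~\ref{Thm: HpinLpIntro}, and a function $\eta\in\widetilde\Psi_N^\delta(\R)$, extended to an even function on $\R$, satisfies $\sppt(\eta_t(L^{1/2})u)\subseteq B(x,r+c_L\delta t)$; note $\eta(\lambda^{1/2})$ as a function of $\lambda$ corresponds to a function holomorphic on a sector, so $\eta_t(L^{1/2})=\widetilde\eta_t(L)$ for an appropriate $\widetilde\eta$, and membership in $\Psi_\beta(S_\theta^o)$ is preserved after halving the decay exponent (this is the source of the passage from $\beta>\kappa/2$ to $\beta>\kappa/4$ and from $N>\kappa/2$ to $N>\kappa/4$).

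First I would record the equivalence of Davies--Gaffney estimates with finite propagation speed for $(e^{itL^{1/2}})_{t\in\R}$, and the consequent compact-support propagation for $\eta_t(L^{1/2})$ when $\eta\in\widetilde\Psi_N^\delta(\R)$. Second, I would invoke the reproducing formula \eqref{rep} with $D$ replaced by $L^{1/2}$, choosing $\psi\in\Psi_\beta(S_\theta^o)$ nondegenerate with $\beta>\kappa/4$ and $\eta\in\widetilde\Psi_N^\delta(\R)$ nondegenerate, which requires checking that the family $(\psi_t(L^{1/2})\eta_s(L^{1/2}))_{s,t>0}$ enjoys enough $L^2$ off-diagonal decay to absorb the doubling constant $A\alpha^\kappa$ — here the Davies--Gaffney/off-diagonal estimates for $L$ together with the quadratic change of variable $t\mapsto t^2$ supply decay of order comparable to $\beta$ at $0$ and $\infty$, and the condition $\beta>\kappa/4$ is precisely what makes the resulting integral converge. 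This gives $E^p_{L,\psi}(\cV)=E^p_{L,\eta}(\cV)$. Third, the compact support of $\eta_t(L^{1/2})u$ for $u$ supported in a ball, combined with the ultracontractivity \eqref{GnewSA} (equivalently, the pointwise Gaussian-type kernel bounds for small time) and a local Sobolev-type embedding, lets one control $\|\eta_t(L^{1/2})U_t\|_p$ and show that the map $U\mapsto\int_0^\infty\eta_t(L^{1/2})U_t\,\frac{dt}{t}$ extends boundedly from $T^p\cap T^2$ into $L^p(\cV)$; hence the completion of $E^p_{L,\eta}(\cV)$ in $L^p(\cV)$ exists, and therefore so does $H^p_{L,\psi}(\cV)$.

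For the case $p=1$, I would then argue as follows. The inclusion $H^1_{L,\text{at}(N)}(\cV)\subseteq H^1_{L,\text{mol}(N)}(\cV)$ is trivial since atoms are molecules. The inclusion $H^1_{L,\text{mol}(N)}(\cV)\subseteq H^1_{L,\psi}(\cV)$ is the content of the molecular embedding already available from the general theory (the molecular decomposition of tent spaces composed with the synthesis operator), valid once $N>\kappa/4$. The remaining and principal inclusion $H^1_{L,\psi}(\cV)\subseteq H^1_{L,\text{at}(N)}(\cV)$ is where finite propagation speed does the decisive work: given $u\in E^1_{L,\psi}(\cV)$ with $u=\int_0^\infty\psi_t(L^{1/2})U_t\,\frac{dt}{t}$, one replaces $\psi$ by $\eta\in\widetilde\Psi_N^\delta(\R)$ via the reproducing formula, applies the atomic decomposition $U=\sum_j\lambda_j A_j$ of the tent space $T^1$ with $A_j$ supported in a Carleson box over a ball $B_j$, and observes that $\int_0^\infty\eta_t(L^{1/2})(A_j)_t\,\frac{dt}{t}$ is, up to normalisation, an $L$-atom associated with the dilate of $B_j$: it is supported in a fixed enlargement of $B_j$ because of the compact-support propagation of $\eta_t(L^{1/2})$, it lies in the range of $L^N$ (equivalently satisfies the $N$ moment conditions) because each $\eta\in\widetilde\Psi_N^\delta$ vanishes to order $N$ at the origin so factors as $\eta(\lambda)=\lambda^N\tilde\eta(\lambda)$ through the functional calculus of $L^{1/2}$, and it satisfies the $L^2$ size bound by the tent-space normalisation of $A_j$ together with quadratic estimates for $L$. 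Summing, $u=\sum_j\lambda_j a_j$ with $\sum_j|\lambda_j|\lesssim\|U\|_{T^1}\simeq\|u\|_{E^1_{L,\psi}}$, which gives the atomic bound and closes the circle of inclusions.

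The main obstacle I anticipate is the second step: verifying that the composed family $(\psi_t(L^{1/2})\eta_s(L^{1/2}))_{s,t>0}$ has $L^2$ off-diagonal decay strong enough, after the change of variables converting $L^{1/2}$-estimates into $L$-estimates, to dominate the polynomial volume growth $\alpha^\kappa$ — this is exactly the arithmetic behind the hypothesis $\beta>\kappa/4$, and it has to be tracked carefully because the reproducing formula is being transported through the square-root correspondence, which halves decay exponents. Everything else is either a direct transcription of the Hodge--Dirac argument (Theorem~\ref{Thm: HpinLpIntro}) with $D\rightsquigarrow L^{1/2}$ or an appeal to the already-established general machinery of tent spaces, off-diagonal estimates, and molecular/atomic decompositions.
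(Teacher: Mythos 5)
Your plan is built on a claim that is false, and the failure is not a detail that can be patched around. You assert that the Davies--Gaffney estimates for $(e^{-tL})_{t>0}$ are equivalent to finite propagation speed of the unitary group $(e^{itL^{1/2}})_{t\in\R}$, and you then want to apply the Section~\ref{section: FPS} machinery (in effect Theorem~\ref{Thm: HpinLpD}, reproducing formulas with $\widetilde\Psi(\R)$, and $\sppt(e^{itL^{1/2}}u)\subseteq B(x,r+c_L|t|)$) with $D\rightsquigarrow L^{1/2}$. But since $L\geq 0$, the operator $L^{1/2}$ has spectrum in $[0,\infty)$ and $e^{itL^{1/2}}$ is the analogue of the half-wave propagator, which does not have finite propagation speed even for $L=-\Delta$ on $\R^n$. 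What Davies--Gaffney is actually equivalent to is finite propagation of the \emph{cosine} family $\cos(t\sqrt L)=\tfrac12(e^{it\sqrt L}+e^{-it\sqrt L})$ (Sikora, Coulhon--Sikora). This matters structurally: to exploit finite speed one must restrict to \emph{even} $\eta$ and use $\eta_t(\sqrt L)=\tfrac1\pi\int_0^\infty\widehat{\eta_t}(s)\cos(s\sqrt L)\,ds$, and correspondingly one needs $\eta\in\widetilde\Psi_{2N}(\R)$ (vanishing to order $2N$) so that the factorization $\eta(\lambda)=\lambda^{2N}\tilde\eta(\lambda)$ translates into $a=L^N b$ with $b$ compactly supported, giving atoms of type $N$ for $L$. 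This is where the bookkeeping $N>\kappa/4$ for $L$ versus $2N>\kappa/2$ for $\sqrt L$ actually lives, not in the smoothness order of $\psi$.

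Separately, your third step — proving that the completion exists by combining the (nonexistent) compact-support propagation with "a local Sobolev-type embedding" and ultracontractivity — does not transfer to a doubling metric measure space, where no Sobolev embedding is available, and the ultracontractivity route cannot be pushed through the $\widetilde\Psi(\R)$ class directly: the factorization that makes \eqref{GnewSA} usable (as in \eqref{H4est}) relies on $\tilde\psi(z)=ze^{-z}$ having an $e^{-z}$ factor to split across the semigroup, and a function $\eta$ with compactly supported Fourier transform has no such super-exponential decay on $\R$. The paper instead verifies \ref{SFinLq} (the $\Psi$-class hypothesis, not the $\widetilde\Psi$ one) with $q=1$ directly from \eqref{GnewSA} and analyticity of $(e^{-tL})$, exactly as in \eqref{H4est}; this already yields the existence of the completion and, via Theorem~\ref{AMcRThm6.2} and the (H3) verification with $m=2$ through the cosine group and the resolvent formula $(zI-L)^{-1}=\tfrac{\pm1}{i\sqrt z}\int_0^\infty e^{\pm i\sqrt z t}\cos(t\sqrt L)\,dt$, the molecular characterisation, without any appeal to finite propagation. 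Finite propagation (of the cosine group, with even $\eta$) enters only in the final atomic step, where your outline is conceptually on the right track once corrected as above.
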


It remains an open question as to whether Theorems~\ref{Thm: HpinLpIntroL} and~\ref{Thm: HpinLPforHLMMY} hold in the absence of ultracontractivity estimates such as~\eqref{Gnew} and \eqref{GnewSA}. The first-order methods developed here, however, provide a new proof of Theorem~\ref{Thm: HpinLpIntroL} that does not rely on ultracontractivity but instead requires that $A$ is self-adjoint with smooth coefficients. We present this proof at the conclusion of the paper as a basis for future work.

The structure of the paper is as follows. In Section~\ref{section: notation}, we fix notation and discuss when the completion of a normed space inside a given Banach space exists. In Section~\ref{section: Pointwise}, we briefly recast the theory of Hardy spaces from~\cite{AMcR} in the context of a vector bundle $\cV$ over a doubling metric measure space $M$ for any operator ${\cD}$ on $L^2(\cV)$ that is bisectorial with a bounded holomorphic functional calculus and that satisfies polynomial off-diagonal estimates. We then introduce an additional hypothesis \ref{SFinLq} on ${\cD}$, based on the $\Psi(S^o_\theta)$ class, that guarantees the embedding $H^p_{\cD}(\cV) \subseteq L^p(\cV)$, when $p\in[1,2]$, and the molecular characterisation of $H^1_{\cD}(\cV)$. This is the content of Theorems~\ref{Thm: H1injectivity} and \ref{AMcRThm6.2}.

In Section~\ref{section: FPS}, we restrict consideration to any operator $D$ that is self-adjoint on $L^2(\cV)$ and for which the associated $C_0$-group $(e^{itD})_{t\in\R}$ has finite propagation speed. This allows us to introduce an alternative hypothesis \ref{SFinLqFPS} on $D$, based on the $\widetilde\Psi(\R)$ class, that guarantees the embedding $H^p_D(\cV) \subseteq L^p(\cV)$, when $p\in[1,2]$, and the atomic characterisation of $H^1_D(\cV)$. This is the content of Theorems~\ref{Thm: H1injectivityFPS} and~\ref{AMcRThm6.2FPS}. In Theorem~\ref{Thm: HpinLpD}, we verify \ref{SFinLqFPS} when $M$ is a complete Riemannian manifold and $D$ is a smooth-coefficient, self-adjoint, first-order, differential operator with bounded principal symbol

The results for the Hodge--Dirac operator $D=d+d^*$ and the divergence form operator $L=-\Div A \nabla$ in Theorems~\ref{Thm: HpinLpIntro} and~\ref{Thm: HpinLpIntroL} are deduced in Sections~\ref{section: ApplicationsI} and~\ref{section: ApplicationsII}. In Section~\ref{section: HLMMY}, we combine the techniques of the preceding two sections to prove Theorem~\ref{Thm: HpinLPforHLMMY}. Section~\ref{section:ODE} is an appendix that contains the technical off-diagonal estimates used to prove Theorems~\ref{Thm: H1injectivityFPS} and \ref{AMcRThm6.2FPS}.

\section{Notation and Preliminaries}\label{section: notation}
Throughout the paper, let $M$ denote a {\it metric measure space} with a metric $\rho$ and a $\sigma$-finite measure $\mu$ that is Borel with respect to the $\rho$-topology. A ball in $M$ will always refer to an open $\rho$-ball. For $x\in M$ and $\alpha, r>0$, let $B(x,r)$ denote the ball in $M$ with centre $x$ and radius $r$, let $V(x,r) = \mu(B(x,r))$ and $(\alpha B)(x,r)=B(x,\alpha r)$. The metric measure space $M$ is called \textit{doubling} when there exist constants $A\geq1$ and $\kappa\geq0$ such that \begin{equation}\tag{D$_\kappa$}\label{D}
0<V(x,\alpha r)\leq A\alpha^{\kappa} V(x,r)<\infty\qquad \forall x\in M, \ \forall r>0, \ \forall \alpha\geq 1.
\end{equation}
For any $E, F\subseteq M$, set $\rho(E,F) = \inf \{ \rho(x,y) : x\in E,\, y\in F\}$.

A {\it vector bundle} $\cV$ over $M$ refers to a  complex vector bundle $\pi:\cV\rightarrow M$ equipped with a Hermitian metric $\langle\cdot,\cdot\rangle_x$ that depends continuously on $x\in M$. For any vector bundle $\cV$, there are naturally defined Banach spaces $L^p(\cV)$, $1\leq p\leq \infty$, of measurable sections. The Hilbert space $L^2(\mathcal{V})$ of square integrable sections of $\mathcal{V}$ has the inner product $\langle u, v \rangle = \int_M \langle u(x), v(x) \rangle_x\, d\mu(x)$. For any linear operator $T$ on $L^2(\cV)$, the domain $\Dom(T)$, range $\Ran(T)$ and null space $\Nul(T)$ are subspaces of $L^2(\cV)$, and the operator norm $\|T\| = \sup \{\|Tu\|_{L^2(\mathcal{V})}/\|u\|_{L^2(\mathcal{V})} : u\in\Dom(T), u\neq0 \}$. The Banach algebra of all bounded linear operators on $L^2(\mathcal{V})$ is denoted by $\mathcal{L}(L^2(\mathcal{V}))$.

For normed spaces $X$ and $Y$, we write $X\subseteq Y$ when $X$ is a subset of $Y$ with the property that there exists $C>0$ such that $\|x\|_Y\leq C \|x\|_X$ for all $x\in X$, and we write $X=Y$ when $X\subseteq Y \subseteq X$. A completion $(\mathcal{X},\imath)$ of a normed space $X$ consists of a Banach space $\mathcal{X}$ and an isometry $\imath:X\rightarrow\mathcal{X}$ such that $\imath(X)$ is dense in $\mathcal{X}$. Every normed space has a completion but this abstract construction is not sufficient for our purposes. It is convenient to formalise the following related notion.

\begin{Def}\label{Def: relcompletion}
Let $X$ be a normed space and suppose that $X\subseteq Y$ for some Banach space $Y$. A Banach space $\widetilde{X}$ is called \textit{the completion of $X$ in $Y$} when $X\subseteq\widetilde{X}\subseteq Y$, the set $X$ is dense in~$\widetilde{X}$, and  $\|x\|_X = \|x\|_{\widetilde{X}}$ for all $x\in X$. 
\end{Def}  

It is easily checked that the completion $\widetilde{X}$ of $X$ in $Y$ is unique whenever it exists. Moreover, the set $\widetilde{X}$ consists of all $x$ in $Y$ for which there is a Cauchy sequence $(x_n)_n$ in $X$ such that $(x_n)_n$ converges to $x$ in $Y$, and the norm $\|x\|_{\widetilde{X}} = \lim_{n\rightarrow\infty} \|x_n\|_{X}$. This can be deduced from the following necessary and sufficient conditions for the existence of a completion inside a given Banach space. The proof is left to the reader.

\begin{Prop}\label{Prop: relativecomp}
Let $X$ be a normed space and suppose  that $X\subseteq Y$ for some Banach space $Y$, so the identity $I:X\rightarrow Y$ is bounded. The following are equivalent:
\begin{enumerate}
\item the completion of $X$ in $Y$ exists;
\item if $(\mathcal{X},\imath)$ is a completion of $X$, then the unique operator $\widetilde{I}$ in $\mathcal{L}(\mathcal{X},Y)$ defined by the commutative diagram below, is injective;
\[\label{fig1}
\xymatrix{
X \ar[d]_-{{\imath}} \ar[r]^-{I}
& Y\\
\mathcal{X}  \ar[ur]_-{\widetilde{I}}
}\]
\item for each Cauchy sequence $(x_n)_n$ in $X$ that converges to $0$ in $Y$, it follows that $(x_n)_n$ converges to $0$ in $X$.
\end{enumerate}
\end{Prop}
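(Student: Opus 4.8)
The plan is to prove Proposition~\ref{Prop: relativecomp} by establishing the cycle of implications $(1)\Rightarrow(2)\Rightarrow(3)\Rightarrow(1)$, exploiting throughout the universal property of the abstract completion $(\mathcal{X},\imath)$: since $\imath$ is an isometry with dense range and $Y$ is complete, the bounded map $I:X\to Y$ factors uniquely through a bounded map $\widetilde I\in\mathcal{L}(\mathcal{X},Y)$ with $\widetilde I\circ\imath=I$, as in the displayed diagram. A useful preliminary observation is that $\Nul(\widetilde I)$ is precisely the set of limits in $\mathcal{X}$ of $\imath$-images of Cauchy sequences $(x_n)_n$ in $X$ with $x_n\to0$ in $Y$: every element of $\mathcal{X}$ has such a representing sequence because $\imath(X)$ is dense, and $\widetilde I$ of that limit equals $\lim_n I(x_n)=\lim_n x_n$ in $Y$.

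For $(1)\Rightarrow(2)$: assume the completion $\widetilde X$ of $X$ in $Y$ exists. Then the inclusion $X\hookrightarrow\widetilde X$ is an isometry with dense range into the Banach space $\widetilde X$, so by uniqueness of the abstract completion there is an isometric isomorphism $\Phi:\mathcal{X}\to\widetilde X$ with $\Phi\circ\imath=(\text{inclusion }X\hookrightarrow\widetilde X)$. Composing with the inclusion $\widetilde X\hookrightarrow Y$ and using uniqueness of the factorisation gives $\widetilde I=(\widetilde X\hookrightarrow Y)\circ\Phi$, which is injective since $\Phi$ is a bijection and $\widetilde X\subseteq Y$. For $(2)\Rightarrow(3)$: let $(x_n)_n$ be Cauchy in $X$ with $x_n\to0$ in $Y$. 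By completeness of $\mathcal{X}$, $(\imath(x_n))_n$ converges to some $\xi\in\mathcal{X}$, and by the preliminary observation $\widetilde I(\xi)=\lim_n x_n=0$ in $Y$; injectivity of $\widetilde I$ forces $\xi=0$, so $\imath(x_n)\to0$ in $\mathcal{X}$, and since $\imath$ is an isometry, $x_n\to0$ in $X$. For $(3)\Rightarrow(1)$: assume the Cauchy-sequence condition. I claim $\widetilde I$ is injective: if $\xi\in\Nul(\widetilde I)$, pick a sequence $(x_n)_n$ in $X$ with $\imath(x_n)\to\xi$; then $(x_n)_n$ is Cauchy in $X$ and $x_n=I(x_n)=\widetilde I(\imath(x_n))\to\widetilde I(\xi)=0$ in $Y$, so by (3) $x_n\to0$ in $X$, hence $\xi=\lim_n\imath(x_n)=0$. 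Thus $\widetilde I$ is an injective bounded map, and $\widetilde X:=\widetilde I(\mathcal{X})$, equipped with the norm transported from $\mathcal{X}$ (equivalently $\|y\|_{\widetilde X}:=\|\widetilde I^{-1}y\|_{\mathcal{X}}$), is a Banach space with $X\subseteq\widetilde X\subseteq Y$; the density of $\imath(X)$ in $\mathcal{X}$ gives density of $X$ in $\widetilde X$, and $\widetilde I\circ\imath=I$ together with $\imath$ an isometry gives $\|x\|_{\widetilde X}=\|x\|_X$ for $x\in X$. This is exactly the completion of $X$ in $Y$.

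I do not expect any genuine obstacle here; the argument is a routine exercise in the universal property of completions, which is presumably why the authors leave it to the reader. The one point requiring mild care is the identification of the norm on $\widetilde X$ in step $(3)\Rightarrow(1)$ and the verification that the resulting inclusion $\widetilde X\subseteq Y$ is in the sense of the paper's convention (a bounded inclusion of normed spaces): this is immediate since $\widetilde I$ is bounded, so $\|y\|_Y=\|\widetilde I(\widetilde I^{-1}y)\|_Y\le\|\widetilde I\|\,\|y\|_{\widetilde X}$. Finally, the concluding description in the text---that $\widetilde X$ consists of all $y\in Y$ arising as $Y$-limits of $X$-Cauchy sequences, with $\|y\|_{\widetilde X}=\lim_n\|x_n\|_X$---follows by unwinding the isometric isomorphism $\widetilde I^{-1}:\widetilde X\to\mathcal{X}$ through the representing-sequence description of $\mathcal{X}$ noted at the outset.
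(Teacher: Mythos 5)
The paper deliberately omits a proof here (``The proof is left to the reader''), so there is nothing to compare against; your argument is a complete and correct filling of that gap via the expected route, namely the universal property of the abstract completion. All three implications check out: the preliminary characterisation of $\Nul(\widetilde I)$ as limits of $\imath$-images of $X$-Cauchy sequences that tend to $0$ in $Y$ is exactly what makes $(2)\Leftrightarrow(3)$ transparent, and your construction in $(3)\Rightarrow(1)$ of $\widetilde X=\widetilde I(\mathcal X)$ with the transported norm, together with the boundedness estimate $\|y\|_Y\le\|\widetilde I\|\,\|y\|_{\widetilde X}$, verifies every clause of Definition~\ref{Def: relcompletion}, including the paper's convention that $\widetilde X\subseteq Y$ means a bounded inclusion. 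The only point I would flag for polish, not correctness, is in $(1)\Rightarrow(2)$: the assertion $\widetilde I=(\widetilde X\hookrightarrow Y)\circ\Phi$ deserves the one-line justification that both sides are bounded operators $\mathcal X\to Y$ agreeing on the dense set $\imath(X)$, which is the uniqueness you invoke.
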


We adopt the convention for estimating $x,y\geq0$ whereby $x\lesssim y$ means that there exists a constant $C\geq1$, which only depends on constants specified in the relevant preceding hypotheses, such that $x\leq Cy$. We write $x\eqsim y$ when $x\lesssim y \lesssim x$. The set of positive integers is denoted by $\mathbb{N}$ whilst $\mathbb{N}_0=\mathbb{N}\cup\{0\}$ and $\R_+=(0,\infty)$. Finally, we apologise in advance for the excess of notation, but it is required to handle some delicate points.

\section{Sectorial Operators with Off-Diagonal Estimates}\label{section: Pointwise}
Auscher, McIntosh and Russ~\cite{AMcR} designed the Hardy spaces of differential forms $H^p_D(\wedge T^*M)$, $1\leq p\leq \infty$, for the Hodge--Dirac operator $D=d+d^*$ acting on $L^2(\wedge T^*M)$ over a doubling Riemannian manifold $M$. We briefly recast that theory in the context of a vector bundle $\cV$ over a doubling metric measure space $(M,\rho,\mu)$. Instead of the Hodge--Dirac operator, we consider any closed, densely defined operator $\cD: \Dom(\cD) \subseteq L^2(\cV) \rightarrow L^2(\cV)$ that is bisectorial with a bounded holomorphic functional calculus (e.g. this holds when $\cD$ is self-adjoint) and satisfies polynomial off-diagonal estimates (e.g. these hold for suitable classes of differential operators~$\cD$, not necessarily of first-order). The setup below allows us to define these properties.

For $0\leq\mu<\theta<\pi/2$, define the following bisectors in the complex plane:
\begin{align}\begin{split}\label{sector}  
S_\mu &= \{z\in\C : z=0 \text{ or } |\arg z|\leq\mu \text{ or } |\pi-\arg z|\leq\mu\};\\
S_\theta^o &= \{z\in\C\setminus\{0\} : |\arg z|<\theta \text{ or } |\pi-\arg z|<\theta\}.
\end{split}\end{align}
A function on $S_\theta^o$ is called \textit{nondegenerate} when it is not identically zero on each component of $S_\theta^o$. The algebra of bounded complex-valued functions on $S_{\theta}^o\cup\{0\}$ that are holomorphic on $S_\theta^o$ is denoted by $H^\infty(S_\theta^o\cup\{0\})$. For $\sigma,\tau>0$, define
\begin{align*}
\Psi_\sigma^\tau(S_{\theta}^o) = \{\psi \in H^\infty(S_{\theta}^o\cup\{0\}): |\psi(z)| \lesssim \min\{|z|^{\sigma},|z|^{-\tau}\}\},
\end{align*}
$\Psi_\sigma(S_{\theta}^o)=\bigcup_{\tau>0}\Psi_\sigma^\tau(S_{\theta}^o)$, $\Psi^\tau(S_{\theta}^o)=\bigcup_{\sigma>0}\Psi_\sigma^\tau(S_{\theta}^o)$ and $\Psi(S_{\theta}^o)=\bigcup_{\sigma>0}\bigcup_{\tau>0}\Psi_\sigma^\tau(S_{\theta}^o)$. For functions $f:S_{\theta}^o\rightarrow\C$, define $f^*(z)=\overline{f(\bar z)}$, and for $t>0$, define $f_t(z)=f(tz)$.

Consider the following hypotheses concerning a closed, densely defined operator $\cD : \Dom(\cD) \subseteq L^2(\cV) \rightarrow L^2(\cV)$, where $\ca_E$ denotes the characteristic function of a measurable set $E\subseteq M$, and $\langle \alpha \rangle = \min \{\alpha, 1\}$ and $\langle \frac{\alpha}{0} \rangle = 1$ when $\alpha>0$.
\begin{equation}\tag{H1}\label{H1}
\begin{minipage}[t]{0.915\textwidth}
There exists $\omega\in[0,\pi/2)$ such that $\cD$ is $\textit{type }S_{\omega}$, which is defined to mean that the spectrum $\sigma(\cD) \subseteq S_{\omega}$ and that for each $\theta\in(\omega,\pi/2)$, there exists $C_\theta>0$ such that  $\|(zI-\cD)^{-1}u\|_2 \leq {C_\theta}\|u\|_2/{|z|}$ for all $z\in \C\setminus S_\theta$ and $u\in L^2(\cV)$.
\end{minipage}
\end{equation}
\begin{equation}\tag{H2}\label{H2}
\begin{minipage}[t]{0.915\textwidth}
For each $\theta\in(\omega,\pi/2)$, the operator $\cD$ has a \textit{bounded $H^\infty(S_{\theta}^o\cup\{0\})$ functional calculus in $L^2(\cV)$}, which is defined to mean that there exists $c_\theta>0$ such that  $\|\psi(\cD) u\|_2\leq c_\theta \|\psi\|_\infty\|u\|_2$ for all $\psi\in \Psi(S^o_\theta)$ and ${u\in L^2(\cV)}$.
\end{minipage}
\end{equation}
\begin{equation}\tag{H3}\label{H3}
\begin{minipage}[t]{0.915\textwidth}
There exists $m\in\N$ such that for each $\theta\in(\omega,\pi/2)$ and $N\in\N$ it holds that 
\[
 \|\ca_E (zI-\cD)^{-1} \ca_F u \|_2 \leq  \frac{C_{\theta,N}}{|z|} \left\langle \frac{1}{\rho(E,F)^{m}|z|} \right\rangle^N  \|u\|_2
\]
for all $z\in\C\setminus S_\theta$, $u\in L^2(\cV)$, measurable sets $E,F\subseteq M$, and some $C_{\theta,N}>0$.
\end{minipage}
\end{equation}

Let us note that \eqref{H1} is implicit in \eqref{H2} and \eqref{H3}. It is well known that \eqref{H1} and \eqref{H2} hold with $\omega=0$, $C_\theta=1/\sin\theta$ and $c_\theta=1$, whenever $\cD$ is self-adjoint. The number $m$ in \eqref{H3} indicates that the off-diagonal estimates associated with $\cD$ resemble those associated with an $m$th-order differential operator.

The theory of type $S_{\omega}$ operators is well known (see, for instance, \cite{McIntosh1986,ADMc,AMcN}). If \eqref{H1} holds, then for $\theta \in (\omega,\pi/2)$ and $\psi\in\Psi(S^o_\theta)$, define $\psi(\cD)\in\mathcal{L}(L^2(\cV))$ by
\begin{equation}\label{eq:Pfc}
\psi(\cD)u = \frac{1}{2\pi i} \int_{\partial S^o_{\mu}} \psi(z) (zI-\cD)^{-1}u\, d z \qquad \forall u\in L^2(\cV),
\end{equation}
where $\mu\in(\omega,\theta)$ is arbitrary and $\partial S^o_{\mu}$ is the positively oriented boundary of $S^o_{\mu}$. It holds that $L^2(\cV) = \overline{\Ran(\cD)} \oplus N(\cD)$ when $\cD$ is type $S_{\omega}$ (see \cite[Theorem~3.8]{CDMcY}) and so
\begin{equation}\label{eq:PfcRan}
\psi(\cD)u = \mathsf{P}_{\overline{\Ran(\cD)}}\, \psi(\cD)\, \mathsf{P}_{\overline{\Ran(\cD)}}\, u \qquad \forall u\in L^2(\cV),
\end{equation}
where $\mathsf{P}_{\overline{\Ran(\cD)}}$ denotes the projection from $L^2(\cV)$ onto $\overline{\Ran(\cD)}$ (see~\cite[Lemma~4.5]{Morris2010}).

It is well known (see \cite{ADMc,McIntosh1986}) that \eqref{H2} holds if and only if the quadratic estimate
\begin{equation}\label{eq:QE}
\int_0^\infty \|\psi_t(\cD)u\|_2^2\ \frac{ d t}{t} \eqsim \|u\|^2 \qquad \forall u\in\overline{\Ran(\cD)}
\end{equation}
holds for all nondegenerate $\psi\in\Psi(S^o_{\theta})$, where $\psi_t(z)=\psi(tz)$. If \eqref{H2} holds, then for $f\in H^\infty(S^o_{\theta}\cup\{0\})$, define $f(\cD)\in\mathcal{L}(L^2(\cV))$ satisfying $\|f(\cD)\|\leq c_\theta \|f\|_\infty$ by
\begin{equation}\label{eq:Hfc}
f(\cD)u = \lim_{n\rightarrow\infty} (f\psi_{(n)})(\cD)u + f(0)\mathsf{P}_{N(\cD)}u\qquad \forall u\in L^2(\cV),
\end{equation}
where $(\psi_{(n)})_{n\in\N}$ is an arbitrary sequence of uniformly bounded functions in $\Psi(S^o_{\theta})$ that converges to~1 uniformly on compact subsets of $S^o_{\theta}$. The mapping $f~\mapsto~f(\cD)$ given by~\eqref{eq:Hfc} is the unique algebra homomorphism from $H^\infty(S_{\theta}^o\cup\{0\})$ into $\mathcal{L}(L^2(\cV))$ with the following properties (see~\cite[Lecture 2]{ADMc}):  
\begin{align}
&\label{IdH}\textrm{if $\mathbf{1}(z)=1$ on $S^o_\theta\cup\{0\}$, then $\mathbf{1}(\cD)=I$ on $L^2(\cV)$;}\\
&\label{ResH}\textrm{if $\lambda\in \C\setminus S_\omega$ and $f(z)=(\lambda-z)^{-1}$ on $S^o_\theta \cup \{0\}$, then $f(\cD)=(\lambda I - \cD)^{-1}$;}\\
&\label{cvH}\textrm{\begin{minipage}[t]{0.9\textwidth}if $(f_n)_n$ is a sequence in $H^\infty(S_\theta^o\cup\{0\})$ that converges uniformly on compact sets to a function $f$ in $H^\infty(S_\theta^o\cup\{0\})$, and $\textstyle \sup_n \|f_n\|_{\infty} < \infty$, then $\textstyle \lim_n f_n(\cD)u = f(\cD)u$ for all $u\in L^2(\cV)$.
\end{minipage}}
\end{align}

Hypotheses \eqref{H1}--\eqref{H3} are sufficient to construct Hardy spaces $H^p_\cD(\cV)$ as in~\cite{AMcR}. To begin, we use~\eqref{eq:Pfc} to obtain the following extension of \cite[Lemma~3.6]{AMcR} (for the improved $\Psi(S^o_\theta)$ class exponents presented here, see~\cite[Lemma~ 7.3]{HvNP}): if $0<\delta<\sigma$, $\theta\in(\omega,\pi/2)$ and $\psi\in\Psi_\sigma(S^o_\theta)$, then there exists $C>0$ such that 
\begin{equation}\label{eq:PfcOD}
 \|\ca_E (f\psi_t)(\cD) \ca_F u\|_2 \leq C\|f\|_\infty \left\langle \frac{t}{\rho(E,F)^{m}} \right\rangle^{\sigma-\delta}\|u\|_2
\end{equation}
for all $t>0$, $f\in H^\infty(S^o_\theta\cup\{0\})$, $u\in L^2(\cV)$, and measurable sets $E,F\subseteq M$.

The theory of tent spaces $T^p(\R^{n+1}_+)$ developed by Coifman, Meyer and Stein~\cite{CMS} has the following extension when $\pi:\mathcal{V}\rightarrow M$ is a vector bundle over a \textit{doubling} metric measure space $M$. Let $\mathcal{V_+}$ denote the vector bundle $\pi_+:\mathcal{V}\times\R_+\rightarrow M\times\R_+$ over $M\times\R_+$ defined by $\pi_+(v,t):=(\pi(v),t)$ for all $v\in\cV$, $t\in\R_+$. For $x\in M$, $t\in\R_+$ and sections $U, V$ of $\mathcal{V_+}$, since $U(x,t)\in \pi^{-1}(\{x\}) \times \{t\}$, we let $U_t(x)$ denote the component of $U(x,t)$ in $\pi^{-1}(\{x\})$, and define the Hermitian metric on $\cV_+$ by $\langle U(x,t),V(x,t) \rangle_{x,t} := \langle U_t(x), V_t(x)\rangle_x$. For $p\in[1,\infty)$, the tent space $T^p(\mathcal{V}_+)$ is the Banach space of all $U$ in $L^2_{\text{loc}}(\mathcal{V}_+)$ satisfying
\[
\|U\|_{T^p} := \left(\int_M \bigg(\iint_{\Gamma(x)} |U_t(y)|_{y}^2\ \frac{ d\mu(y)}{V(y,t)} \frac{ d t}{t}\bigg)^{{p}/{2}}d\mu(x) \right)^{{1}/{p}} < \infty,
\]
where the cone $\Gamma(x) = \{(y,t)\in M\times \R_+\ |\ \rho(x,y)<t\}$. The tent space ${T^\infty(\mathcal{V}_+)}$ is the Banach space of all $U$ in $L^2_{\text{loc}}(\mathcal{V}_+)$ satisfying
\[
\|U\|_{T^\infty} := \sup_{x\in M} \sup_{B\in\mathcal{B}(x)}\bigg(\frac{1}{\mu(B)}\iint_{T(B)} |U_t(y)|_{y}^2\ d\mu(y)\frac{ d t}{t}\bigg)^{{1}/{2}} < \infty,
\]
where $\mathcal{B}(x)$ denotes the set of all balls $B\subseteq M$ with the property that $x\in B$, and the tent $T(B) = \{(y,t)\in M\times\R_+\ |\ \rho(y,M\setminus B)\geq t\}$.

We require the following properties, which can be proved as in the references cited when $M$ is a \textit{doubling} metric measure space: 
\begin{align}
\label{TpDual}\textrm{\begin{minipage}[t]{0.9\textwidth}
if $p\in[1,\infty)$ and $1/p+1/p'=1$, then $T^{p'}$ is realized as the dual of $T^p$ by the pairing $\textstyle\langle U, V \rangle_{T^2} := \int_0^\infty\int_M \langle U_t(x),V_t(x) \rangle_x\, d\mu(x)dt/t$ (see \cite[Theorem 1]{CMS});
\end{minipage}}\\
\label{TpInterp}\textrm{\begin{minipage}[t]{0.9\textwidth}
if $\theta\in(0,1)$, $1\leq p_0 < p_1\leq \infty$ and $1/p_\theta=(1-\theta)/p_0+\theta/p_1$, then the complex interpolation space $[T^{p_0},T^{p_1}]_\theta = T^{p_\theta}$ (see \cite{HVT, Bernal, CV, Amenta}).
\end{minipage}}
\end{align}

There is also the following atomic characterisation of  $T^1(\mathcal{V}_+)$, for which a section $A \in L^2(\mathcal{V}_+)$ is called a $T^1$-\textit{atom} when there is a ball $B \subseteq M$ such that $A$ is supported on the tent $T(B)$ and the norm $\|A\|_{T^2}\leq \mu(B)^{-1/2}$.

\begin{Thm}\label{maintentatomic}
Suppose that $\cV$ is a vector bundle over a doubling metric measure space $M$ and that $p\in[1,\infty)$. For each $U$ in $T^1(\mathcal{V}_+)\cap T^p(\mathcal{V}_+)$, there exist a sequence $(\lambda_j)_j$ in $\ell^1$ and a sequence $(A_j)_j$ of $T^1$-atoms such that $\sum_j\lambda_jA_j$ converges to $U$ in $T^1(\mathcal{V}_+)$, in $T^p(\mathcal{V}_+)$ and almost everywhere in $M\times\R_+$, such that $\|U\|_{T^{1}} \eqsim \|(\lambda_j)_j\|_{\ell^1}$.
\end{Thm}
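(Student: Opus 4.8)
The plan is to carry out the Coifman--Meyer--Stein stopping-time decomposition from \cite{CMS}, adapted to a doubling metric measure space; since the area functionals below involve only the scalar quantities $|U_t(y)|_y$, the bundle structure of $\cV$ is immaterial and the argument is formally the same as in the scalar Euclidean case. I would also organise the construction so that the pieces have pairwise \emph{disjoint} supports, which makes the $T^p$ and a.e.\ convergence claims --- the new content relative to the classical statement --- essentially automatic.

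First I would fix a large aperture $a>1$, to be pinned down by the geometric lemma below, and write $\cA_a(U)$ for the corresponding area function; by the (standard) aperture independence of tent-space norms on a doubling space, $\|\cA_a(U)\|_{L^1(M)}\eqsim\|U\|_{T^1}$ and $\|\cA_a(U)\|_{L^p(M)}\eqsim\|U\|_{T^p}$. For $k\in\Z$ set $O_k=\{\cA_a(U)>2^k\}$, which is measurable with $\mu(O_k)\le2^{-k}\|\cA_a(U)\|_{L^1}<\infty$ and $\mu(O_k)\to0$ as $k\to+\infty$, and let $O_k^*=\{x\in M:\cM\ca_{O_k}(x)>1-\gamma\}$ for a fixed $\gamma\in(0,1)$, where $\cM$ is the Hardy--Littlewood maximal operator on $(M,\rho,\mu)$. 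Then $O_k^*$ is open, $O_k\subseteq O_k^*$ up to a null set by Lebesgue differentiation, and $\mu(O_k^*)\lesssim\mu(O_k)$ by the weak-type $(1,1)$ bound for $\cM$, which holds by \eqref{D}. Next I would take a Whitney covering of each $O_k^*$ by balls $(B_{k,j})_j$ with radii comparable to $\rho(B_{k,j},M\setminus O_k^*)$ and with fixed dilates of bounded overlap, choose a Borel partition $(Q_{k,j})_j$ of $O_k^*$ with $Q_{k,j}\subseteq B_{k,j}$, and define
\[
A_{k,j}=U\cdot\ca_{\,(Q_{k,j}\times\R_+)\,\cap\,(T(O_k^*)\setminus T(O_{k+1}^*))}.
\]
Since $O_{k+1}^*\subseteq O_k^*$ one has $T(O_{k+1}^*)\subseteq T(O_k^*)$ and $\bigcap_kT(O_k^*)=\emptyset$ (as $V(y,t)>0$ while $\mu(O_k^*)\to0$), so the $A_{k,j}$ have pairwise disjoint supports and $\sum_{k,j}A_{k,j}=U$ on $\bigcup_kT(O_k^*)$; a routine Lebesgue-differentiation argument shows $U=0$ a.e.\ off $\bigcup_kT(O_k^*)$, whence $\sum_{k,j}A_{k,j}=U$ a.e.

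The technical heart, as in \cite{CMS}, is a pair of estimates. For the \emph{support}, if $(y,t)\in T(O_k^*)$ then $\rho(y,M\setminus O_k^*)\ge t$, so the Whitney ball containing $y$ has radius $\gtrsim t$; hence $\sppt A_{k,j}\subseteq T(c_1B_{k,j})$ for a fixed dilation $c_1$. For the \emph{size}, the localised Coifman--Meyer--Stein maximal lemma states that, for $a$ chosen large enough, every $(y,t)\notin T(O_{k+1}^*)$ satisfies $\mu\big((M\setminus O_{k+1})\cap B(y,at)\big)\ge\gamma'V(y,at)$ for some $\gamma'>0$ depending only on $\gamma,a$ and the doubling constants; combining this with $\sppt A_{k,j}\subseteq T(c_1B_{k,j})$, Fubini, and the bound $\cA_a(U)\le2^{k+1}$ on $M\setminus O_{k+1}$ gives
\[
\|A_{k,j}\|_{T^2}^2\le\frac1{\gamma'}\int_{(M\setminus O_{k+1})\cap c_2B_{k,j}}\cA_a(U)^2\,d\mu\lesssim2^{2k}\mu(B_{k,j}),
\]
the last step by \eqref{D}. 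In particular each $A_{k,j}$ lies in $T^2(\cV_+)$, so setting $\lambda_{k,j}:=\|A_{k,j}\|_{T^2}\,\mu(c_1B_{k,j})^{1/2}$ and discarding the null terms, each $A_{k,j}/\lambda_{k,j}$ is a $T^1$-atom and $\lambda_{k,j}\lesssim2^k\mu(B_{k,j})$. Summing, with the bounded overlap of the Whitney balls and \eqref{D},
\[
\sum_{k,j}\lambda_{k,j}\lesssim\sum_k2^k\sum_j\mu(B_{k,j})\lesssim\sum_k2^k\mu(O_k^*)\lesssim\sum_k2^k\mu(O_k)\eqsim\|\cA_a(U)\|_{L^1}\eqsim\|U\|_{T^1},
\]
so $\|(\lambda_j)_j\|_{\ell^1}\lesssim\|U\|_{T^1}$. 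For the reverse inequality I would note that a $T^1$-atom $A$ on $T(B)$ has $\cA(A)$ supported in $B$, so $\|A\|_{T^1}=\int_B\cA(A)\,d\mu\le\mu(B)^{1/2}\|A\|_{T^2}\le1$; hence $\|U\|_{T^1}\le\sum_j\lambda_j$ once $T^1$-convergence is in place.

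Finally, the three modes of convergence follow from the disjointness of the supports. Enumerating $(A_{k,j})$ as $(A_j)$ (so that $\lambda_jA_j$ runs through the $A_{k,j}$), at a.e.\ $(y,t)$ the series $\sum_j\lambda_jA_j(y,t)$ has at most one nonzero term, equal to $U(y,t)$, so the partial sums converge to $U$ a.e.\ and equal $U\ca_{G_N}$ for an increasing family $G_N\uparrow\bigcup_kT(O_k^*)$. Since $\cA(U-U\ca_{G_N})\le\cA(U)\in L^1(M)\cap L^p(M)$ and $\cA(U-U\ca_{G_N})\to0$ a.e., the dominated convergence theorem gives $\|U-U\ca_{G_N}\|_{T^1}+\|U-U\ca_{G_N}\|_{T^p}\to0$, so the series converges to $U$ in $T^1$ and in $T^p$ as well; note this uses only $U\in T^1\cap T^p$ and never $U\in T^2$. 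I expect the main obstacle to be the localised Coifman--Meyer--Stein size estimate together with the attendant measure-theoretic bookkeeping on the doubling space --- the maximal-function enlargement, the Whitney decomposition, and the aperture change relating $\cA_a$ to $\cA$; all of this is classical, while the genuinely new assertions, convergence in $T^p$ and a.e., are precisely the parts that the disjoint-support construction makes routine.
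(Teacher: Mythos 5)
Your proof is correct and follows essentially the same route as the paper, whose proof simply cites Russ's doubling-space adaptation of the Coifman--Meyer--Stein stopping-time decomposition for the construction of the atoms and the $T^1$ convergence, and cites HMMc or CMcM for the dominated-convergence step yielding $T^p$ convergence. You have written out, with the disjoint-support bookkeeping made explicit, the details that the paper delegates to those references.
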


\begin{proof}
This follows the proof in \cite[Theorem~1.1]{Russ}, which is based on~\cite[Theorem~1]{CMS}. The convergence in $T^p$ is not explicit in those references, but it follows by dominated convergence, as in~\cite[Proposition~3.25]{HMMc} or \cite[Theorem~3.6]{CMcM}.
\end{proof}

We follow~\cite{AMcR} to begin the development of Hardy spaces $H^p_\cD(\cV)$ in earnest.

\begin{Def}\label{Def: QandS}
Suppose that $\cD$ satisfies \eqref{H1}--\eqref{H3} on $L^2(\cV)$ for some $\omega\in[0,\pi/2)$ and $m \in\N$. For $\theta \in (\omega,\pi/2)$ and $\psi\in\Psi(S_{\theta}^o)$, define $\cQ _\psi^\cD$ in $\mathcal{L}(L^2,T^2)$ by
\[
(\cQ _\psi^\cD u)_t = \psi(t^{m}\cD)u \qquad \forall t>0,\ \forall u\in L^2(\mathcal{V})
\]
and $\cS _\psi^\cD$ in $\mathcal{L}(T^2,L^2)$ by
\[
\cS _{\psi}^\cD U = \int^\infty_0\psi(s^{m}\cD) U_s\frac{ d s}{s} \qquad \forall U\in T^2(\mathcal{V}_+).
\]
\end{Def}

The operator $\cQ_\psi^\cD$ is bounded because \eqref{H2} is equivalent to the quadratic estimate in~\eqref{eq:QE}. The operator $\cS_\psi^\cD$ is bounded because $\cS_\psi^\cD= (\cQ_{\psi^*}^{\cD^*})^*$ and the adjoint $\cD^*$ satisfies \eqref{H2} if and only if $\cD$ satisfies \eqref{H2} (see, for instance, \cite[Lecture 3]{ADMc}). These operators provide the following Calder\'{o}n reproducing formula (see~\cite[Remark 2.1]{AMcR}).

\begin{Prop}\label{Prop: AMcR_Remark2.1}
Suppose that $\cD$ satisfies \eqref{H1}--\eqref{H3} for some $\omega\in[0,\pi/2)$ and $m \in\N$. If $\sigma,\tau>0$, $\theta\in(\omega,\pi/2)$ and $\psi\in\Psi(S_\theta^o)$ is nondegenerate, then there exists a nondegenerate $\tilde\psi\in\Psi_\sigma^\tau(S_\theta^o)$ such that $\cS_{\svt{\psi}}^\cD\cQ_{\tilde\psi}^\cD u = \cS_{\tilde\psi}^\cD\cQ_{\svt{\psi}}^\cD u = \mathsf{P}_{\overline{\Ran(\cD)}}\,u$ for all $u\in L^2(\cV)$.
\end{Prop}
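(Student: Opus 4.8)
The plan is to reduce the claim to a scalar identity in the holomorphic functional calculus. Since $\psi\in\Psi(S_\theta^o)$ is nondegenerate, the quadratic estimate \eqref{eq:QE} together with the nondegeneracy guarantees that there exists $\tilde\psi\in\Psi(S_\theta^o)$ satisfying the Calder\'on-type identity
\[
\int_0^\infty \psi(t^m z)\,\tilde\psi(t^m z)\,\frac{dt}{t} = 1 \qquad \forall z\in S_\theta^o,
\]
and by symmetry of the integrand the same holds with the roles of $\psi$ and $\tilde\psi$ interchanged. The standard construction (cf.\ \cite{McIntosh1986,ADMc}) is to set $\tilde\psi(z) = \psi(z)/\big(\int_0^\infty |\psi(t^m z)|^2\,\frac{dt}{t}\big)$ after rotating $\psi$ on each component of the bisector so that the denominator, which is a nonzero constant on each ray by homogeneity and nondegeneracy, is well defined and bounded below; a routine estimate shows that one can moreover arrange the prescribed decay, so that $\tilde\psi\in\Psi_\sigma^\tau(S_\theta^o)$ for the given $\sigma,\tau>0$ (if the decay of $\psi$ itself is insufficient one multiplies by an auxiliary factor $z^{\sigma'}(1+z)^{-\sigma'-\tau'}$ and renormalises). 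Nondegeneracy of $\tilde\psi$ is immediate since it is a nonvanishing multiple of $\psi$ on each component.

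Next I would translate this scalar identity into the operator identity on $L^2(\cV)$. Unwinding Definition~\ref{Def: QandS}, for $u\in L^2(\cV)$ we have
\[
\cS_{\psi}^\cD \cQ_{\tilde\psi}^\cD u = \int_0^\infty \psi(s^m\cD)\,\tilde\psi(s^m\cD)\,u\,\frac{ds}{s}
= \int_0^\infty (\psi_{s^m}\tilde\psi_{s^m})(\cD)\,u\,\frac{ds}{s},
\]
where I am writing $\psi_t(z)=\psi(tz)$. To pass the integral inside the functional calculus I would truncate: for $0<\ep<R<\infty$, the function $z\mapsto \int_\ep^R \psi(s^m z)\tilde\psi(s^m z)\,\frac{ds}{s}$ lies in $\Psi(S_\theta^o)$, is uniformly bounded in $\ep,R$ by the decay of $\psi,\tilde\psi$, and converges uniformly on compact subsets of $S_\theta^o$ to the constant function $1$ as $\ep\to0$, $R\to\infty$. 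By the convergence property \eqref{cvH} of the $H^\infty$ functional calculus together with \eqref{IdH}, the corresponding operators converge strongly to $\mathbf{1}(\cD)=I$; but by \eqref{eq:PfcRan} each $(\psi_{s^m}\tilde\psi_{s^m})(\cD)$, and hence each truncated integral, maps into and is supported on $\overline{\Ran(\cD)}$, so in the limit one lands on $\mathsf{P}_{\overline{\Ran(\cD)}}$ rather than $I$. Interchanging the order of integration (over $s$) with the Bochner integral defining $\cS_\psi^\cD$ on the truncated pieces is justified by Fubini together with the boundedness of $\cQ_{\tilde\psi}^\cD$ and $\cS_\psi^\cD$ from \eqref{eq:QE} and its adjoint form. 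This yields $\cS_{\psi}^\cD\cQ_{\tilde\psi}^\cD u = \mathsf{P}_{\overline{\Ran(\cD)}}u$, and the identity $\cS_{\tilde\psi}^\cD\cQ_{\psi}^\cD u = \mathsf{P}_{\overline{\Ran(\cD)}}u$ follows identically from the symmetric scalar identity.

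The main obstacle I anticipate is purely bookkeeping around the decay: ensuring that the normalisation $\tilde\psi = \psi/\big(\int_0^\infty|\psi(t^m\cdot)|^2\frac{dt}{t}\big)$ — after the necessary rotations on the two components of the bisector to make the denominator a legitimate nonvanishing constant — can simultaneously be made to have the prescribed decay exponents $\sigma$ at the origin and $\tau$ at infinity without destroying the reproducing identity. This is handled by the auxiliary-factor trick above, at the cost of checking that multiplying $\psi$ by $z^{\sigma'}(1+z)^{-\sigma'-\tau'}$ and renormalising still yields a function in the required $\Psi_\sigma^\tau$ class; the off-diagonal estimate \eqref{eq:PfcOD} is not needed for this proposition but will be the reason one wants the flexibility in $\sigma,\tau$ downstream. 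Everything else is a routine application of \eqref{cvH}, \eqref{IdH}, \eqref{eq:PfcRan}, and Fubini.
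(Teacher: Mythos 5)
Your overall strategy is the right one and is the standard one: manufacture a nondegenerate $\tilde\psi$ with prescribed decay so that the scalar reproducing identity holds on each component of $S_\theta^o$, then pass from scalars to operators via truncation, \eqref{cvH}, \eqref{IdH} and \eqref{eq:PfcRan}. The paper itself offers no proof (it cites \cite[Remark 2.1]{AMcR}), but the same construction appears explicitly in Corollary~\ref{Cor: Repr} of this paper, so there is a direct in-paper model to compare against.

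The concrete gap is in the formula you propose for $\tilde\psi$. Setting $\tilde\psi(z)=\psi(z)/c(z)$ with $c(z)=\int_0^\infty|\psi(t^m z)|^2\,\frac{dt}{t}$ does not produce a holomorphic function: by the scaling substitution $s=t|z|^{1/m}$ one sees $c(z)$ is constant on each \emph{ray} $\arg z=\mathrm{const}$, but it genuinely varies with $\arg z$ across a component, so $\psi/c$ is not in $H^\infty$ (a one-line check with $\psi(z)=ze^{-z}$, $m=1$ gives $c(z)=|z|^2/4(\re z)^2$, hence $\tilde\psi(z)=4e^{-z}(\re z)^2/\bar z$, which contains $\bar z$). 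Moreover, even if $c$ were constant on a component, the identity you would get is $\int_0^\infty \psi(t^m z)^2\,\frac{dt}{t}/c$, not $\int_0^\infty|\psi(t^m z)|^2\,\frac{dt}{t}/c=1$, because $\tilde\psi=\psi/c$ rather than $\psi^*/c$. The correct construction — exactly as in Corollary~\ref{Cor: Repr} — is to take $\tilde\psi(z)=\alpha_\pm\,\zeta(z)\,\psi^*(z)$ on $\{\pm\re z>0\}$, where $\psi^*(z)=\overline{\psi(\bar z)}$, $\zeta$ is a fixed holomorphic factor on the bisector supplying whatever decay $\sigma,\tau$ you need (e.g.\ $\zeta(z)=(\pm z)^{\sigma'}e^{\mp z\sec\theta}$, not your $(1+z)^{-\sigma'-\tau'}$, which vanishes at $z=-1\in S_\theta^o$), and $\alpha_\pm$ are the \emph{constants} $\alpha_\pm^{-1}=\int_0^\infty\zeta(\pm t)|\psi(\pm t)|^2\,\frac{dt}{t}$, strictly positive by nondegeneracy. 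Then $\int_0^\infty\psi(\pm t)\tilde\psi(\pm t)\,\frac{dt}{t}=1$, and a contour rotation using the holomorphy and decay of $\psi\tilde\psi$ extends this to $\int_0^\infty\psi(tz)\tilde\psi(tz)\,\frac{dt}{t}=1$ for all $z\in S_\theta^o$; the renormalisation then brings in the factor $1/m$ if you prefer to state it with $t^m z$. Finally, your description of the limit is slightly off: the truncations $f_n(z)=\int_\epsilon^R\psi(s^mz)\tilde\psi(s^mz)\,\frac{ds}{s}$ satisfy $f_n(0)=0$, so they converge on compacts of $S_\theta^o\cup\{0\}$ to the function $g$ equal to $1$ on $S_\theta^o$ and $0$ at the origin, not to $\mathbf{1}$; thus \eqref{cvH} and \eqref{eq:Hfc} give $f_n(\cD)\to g(\cD)=\mathsf{P}_{\overline{\Ran(\cD)}}$ directly, and there is no tension to resolve. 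With these corrections the argument is complete.
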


In preparation for defining the Hardy space $H^p_{\cD,\psi}(\cV)$, we now define a possibly incomplete space $E^p_{\cD,\psi}(\cV)$.

\begin{Def}\label{AMcRDef5.1}
Suppose that $\cD$ satisfies \eqref{H1}--\eqref{H3} on $L^2(\cV)$ for some $\omega\in[0,\pi/2)$ and $m \in\N$. For $\theta\in(\omega,\pi/2)$, $\psi\in\Psi(S^o_\theta)$ and $p\in[1,\infty]$, the space $E^p_{\cD,\psi}(\cV)$ consists of the set $\cS_{\psi}^\cD(T^p\cap T^{2})$ together with the seminorm
\[
\|u\|_{E^p_{\cD,\psi}} := \inf\{\|U\|_{T^p} : U\in T^p\cap T^{2} \text{ and } u=\cS_{\psi}^\cD U\}
\]
for all $u\in\cS_{\psi}^\cD(T^p\cap T^2)$.
\end{Def}

In \cite{AMcR}, the Hardy space $H^p_{\cD,\psi}(\cV)$ is defined to be an abstract completion of $E^p_{\cD,\psi}(\cV)$. Our question here is whether we can define $H^p_{\cD,\psi}(\cV)$ to be the completion of $E^p_{\cD,\psi}(\cV)$  in $L^p(\cV)$. So does the completion of $E^p_{\cD,\psi}(\cV)$ in $L^p(\cV)$ exist? This is immediate when \eqref{H2} holds and $p=2$, since for each $\theta \in (\omega,\pi/2)$ and nondegenerate $\psi\in\Psi(S^o_\theta)$, we have by~\eqref{eq:PfcRan}, \eqref{eq:QE} and Proposition~\ref{Prop: AMcR_Remark2.1} that $\cS^\cD_\psi(T^2)=\overline{\Ran(\cD)}$ with
\begin{equation}\label{eq:H2equiv}
\|u\|_{E^2_{\cD,\psi}} \eqsim \|\cQ_\psi^\cD u\|_{T^2} \eqsim \|u\|_2 \qquad \forall u\in\overline{\Ran(\cD)}.
\end{equation}
This motivates the following definition.

\begin{Def}
Suppose that $\cD$ satisfies \eqref{H1}--\eqref{H3} on $L^2(\cV)$ for some $\omega\in[0,\pi/2)$ and $m \in\N$. For each $\theta \in (\omega,\pi/2)$ and nondegenerate $\psi\in\Psi(S^o_\theta)$, let $H^2_{\cD,\psi}(\cV)$ denote the set $\overline{\Ran(\cD)}$ together with the norm $\|u\|_{H^2_{\cD,\psi}} := \|u\|_{E^2_{\cD,\psi}}$.
\end{Def}

When $p\in [1,2)$, we do not know whether or not the completion of $E^p_{\cD,\psi}(\cV)$ in $L^p(\cV)$ always exists, so we proceed under additional hypotheses on $\cD$. We begin by recording a routine extension of~\cite[Theorem~4.9 and Lemma 5.2]{AMcR}. In particular, the improved $\Psi(S^o_\theta)$ class exponents in the theorem below follow from~\eqref{eq:PfcOD} (for details, see~\cite[Proposition~7.5]{HvNP} or \cite[Theorem~6.2]{CMcM}).

\begin{Thm}\label{Thm: AMcR_4.9_5.2}
Suppose that $M$ is a doubling metric measure space satisfying~\eqref{D} and that $\cD$ satisfies \eqref{H1}--\eqref{H3} on $L^2(\cV)$ for some $\omega\in[0,\pi/2)$ and $m \in\N$. If $p\in[1,2]$, $\theta\in(\omega,\pi/2)$, $\beta>\kappa/2m$, $\varphi\in\Psi_{\beta}(S^o_\theta)$, $\psi\in\Psi_{\beta}(S^o_\theta)$ and $\tilde\psi\in\Psi^{\beta}(S^o_\theta)$, then
\begin{equation}\label{eq:AMcR4.9}
\|\cQ_{\tilde\psi}^\cD\cS_\svt{\psi}^\cD U\|_{T^p} \lesssim \|U\|_{T^p} \qquad \forall U\in T^p\cap T^2.
\end{equation}
If, in addition, all of $\varphi$, $\psi$ and $\tilde\psi$ are nondegenerate, then
\begin{equation}\label{eq:AMcR5.2Sets}
\cS_\varphi^\cD(T^p\cap T^2) = \cS_\psi^\cD(T^p\cap T^2) = \{u\in\overline{\Ran(\cD)} : \cQ_{\tilde\psi}^\cD u \in T^p\}
\end{equation}
with the norm equivalence
\begin{equation}\label{eq:AMcR5.2Norms}
\|u\|_{E^p_{\cD,\varphi}} \eqsim \|u\|_{E^p_{\cD,\psi}} \eqsim \|\cQ_{\tilde\psi}^\cD u\|_{T^p} \qquad \forall u\in E^p_{\cD,\varphi} = \cS_{\varphi}^\cD(T^p\cap T^{2}).
\end{equation}
Moreover, if the completion $H^p_{\cD,\varphi}$ of $E^p_{\cD,\varphi}$ in $L^p$ exists, and $H^p_{\cD,\varphi}\cap L^2=E^p_{\cD,\varphi}$, then there are unique extensions $\cS^\cD_{\psi}\in\mathcal{L}(T^p,H^p_{\cD,\varphi})$ and $\cQ^\cD_{\tilde\psi} \in \mathcal{L}(H^p_{\cD,\varphi},T^p)$, and $H^p_{\cD,\varphi} = \cS^\cD_{\psi}(T^p)$ with the norm equivalence
\begin{equation}\label{eq:HpEquivNorms}
\|u\|_{H^p_{\cD,\varphi}}
\eqsim 
\inf\{\|U\|_{T^p} : U\in T^p \text{ and } u=\cS^\cD_{\psi} U\}
\eqsim \|\cQ^\cD_{\tilde\psi}u\|_{T^p} \quad \forall u\in H^p_{\cD,\varphi}.
\end{equation}
\end{Thm}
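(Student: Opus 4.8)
The plan is to prove the identities on the dense subspace $E^p_{\cD,\varphi}\subseteq L^2$ first and then extend by continuity. The starting point is the Calder\'{o}n reproducing formula of Proposition~\ref{Prop: AMcR_Remark2.1}: given nondegenerate $\varphi,\psi\in\Psi_\beta(S^o_\theta)$, I would pick $\tilde\psi\in\Psi^\beta_\sigma(S^o_\theta)$ nondegenerate (for $\sigma$ large, to be fixed against $\kappa/2m$) so that $\cS^\cD_{\tilde\psi}\cQ^\cD_\psi u = \cS^\cD_\psi\cQ^\cD_{\tilde\psi}u = \mathsf{P}_{\overline{\Ran(\cD)}}u$ for all $u\in L^2(\cV)$. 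On $\overline{\Ran(\cD)}$ this is the identity, which together with \eqref{eq:AMcR5.2Sets} gives $E^p_{\cD,\varphi} = \cS^\cD_\psi(T^p\cap T^2)$ and, for $u\in E^p_{\cD,\varphi}$, the representation $u = \cS^\cD_\psi(\cQ^\cD_{\tilde\psi}u)$ with $\cQ^\cD_{\tilde\psi}u\in T^p\cap T^2$. Combined with \eqref{eq:AMcR5.2Norms}, this already yields $\|u\|_{H^p_{\cD,\varphi}} = \|u\|_{E^p_{\cD,\varphi}} \eqsim \inf\{\|U\|_{T^p}: U\in T^p\cap T^2,\ u = \cS^\cD_\psi U\} \eqsim \|\cQ^\cD_{\tilde\psi}u\|_{T^p}$ on the dense subspace $E^p_{\cD,\varphi}$.

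Next I would extend the operators. By \eqref{eq:AMcR5.2Norms}, $\cS^\cD_\psi:T^p\cap T^2\to E^p_{\cD,\varphi}\subseteq H^p_{\cD,\varphi}$ is bounded for the $T^p$ and $H^p_{\cD,\varphi}$ norms; since $T^p\cap T^2$ is dense in $T^p$ (by the atomic decomposition of Theorem~\ref{maintentatomic}, every $T^p$ element is approximated in $T^p$ by truncations lying in $T^p\cap T^2$) and $H^p_{\cD,\varphi}$ is complete, there is a unique bounded extension $\cS^\cD_\psi\in\mathcal{L}(T^p,H^p_{\cD,\varphi})$. For $\cQ^\cD_{\tilde\psi}$: estimate \eqref{eq:AMcR4.9} gives $\|\cQ^\cD_{\tilde\psi}\cS^\cD_\psi U\|_{T^p}\lesssim\|U\|_{T^p}$ for $U\in T^p\cap T^2$, which says precisely that on the dense subspace $E^p_{\cD,\varphi}\subseteq H^p_{\cD,\varphi}$ the map $u\mapsto\cQ^\cD_{\tilde\psi}u$ is bounded into $T^p$; since $T^p$ is complete, it extends uniquely to $\cQ^\cD_{\tilde\psi}\in\mathcal{L}(H^p_{\cD,\varphi},T^p)$. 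Here I must check the extension agrees with the original operator: on $E^p_{\cD,\varphi}\cap L^2 = E^p_{\cD,\varphi}$ it does by construction, and the hypothesis $H^p_{\cD,\varphi}\cap L^2 = E^p_{\cD,\varphi}$ ensures no ambiguity arises when we later restrict to $L^2$ elements of $H^p_{\cD,\varphi}$.

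Then I would promote the reproducing identity and the surjectivity $H^p_{\cD,\varphi} = \cS^\cD_\psi(T^p)$ to all of $H^p_{\cD,\varphi}$. Given $u\in H^p_{\cD,\varphi}$, take $(u_n)\subseteq E^p_{\cD,\varphi}$ with $u_n\to u$ in $H^p_{\cD,\varphi}$; then $\cQ^\cD_{\tilde\psi}u_n\to\cQ^\cD_{\tilde\psi}u$ in $T^p$ by boundedness of the extension, and $\cS^\cD_\psi(\cQ^\cD_{\tilde\psi}u_n) = u_n\to u$, so by continuity of the extended $\cS^\cD_\psi$ we get $\cS^\cD_\psi(\cQ^\cD_{\tilde\psi}u) = u$, proving $H^p_{\cD,\varphi}\subseteq\cS^\cD_\psi(T^p)$; the reverse inclusion is the boundedness of $\cS^\cD_\psi$ into $H^p_{\cD,\varphi}$. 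The norm equivalence \eqref{eq:HpEquivNorms} then follows by passing to the limit in the three-way equivalence established on $E^p_{\cD,\varphi}$: each of the three quantities is continuous in $u\in H^p_{\cD,\varphi}$ (the first is the $H^p_{\cD,\varphi}$-norm; the third is $\|\cQ^\cD_{\tilde\psi}u\|_{T^p}$, continuous by the extension; the middle infimum is controlled above by $\|\cQ^\cD_{\tilde\psi}u\|_{T^p}$ via the representation and below, after a density argument using atoms, by a constant times $\|u\|_{H^p_{\cD,\varphi}}$), and they agree on the dense subspace.

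I expect the main obstacle to be the middle term of \eqref{eq:HpEquivNorms}, the infimum over $T^p$ representations: the lower bound $\inf\{\|U\|_{T^p}: u = \cS^\cD_\psi U\}\gtrsim\|u\|_{H^p_{\cD,\varphi}}$ does not follow from continuity alone, because the infimum is not obviously a continuous function of $u$, so one genuinely needs to run the argument uniformly over representations. The clean way is: for any $U\in T^p$ with $\cS^\cD_\psi U = u$, apply \eqref{eq:AMcR4.9} to $U$ (approximated in $T^p$ by elements of $T^p\cap T^2$, using that both $\cS^\cD_\psi$ and $\cQ^\cD_{\tilde\psi}$ are now continuous on their respective extended domains) to get $\|\cQ^\cD_{\tilde\psi}u\|_{T^p} = \|\cQ^\cD_{\tilde\psi}\cS^\cD_\psi U\|_{T^p}\lesssim\|U\|_{T^p}$, then take the infimum over $U$ and combine with $\|\cQ^\cD_{\tilde\psi}u\|_{T^p}\eqsim\|u\|_{H^p_{\cD,\varphi}}$; the upper bound on the infimum is just the explicit representation $u = \cS^\cD_\psi(\cQ^\cD_{\tilde\psi}u)$. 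A secondary point requiring care is that the hypothesis block ``if the completion $H^p_{\cD,\varphi}$ of $E^p_{\cD,\varphi}$ in $L^p$ exists, and $H^p_{\cD,\varphi}\cap L^2 = E^p_{\cD,\varphi}$'' is used precisely to identify the extended $\cQ^\cD_{\tilde\psi}$ with the a priori operator on $L^2\cap H^p_{\cD,\varphi}$ and to make the density of $E^p_{\cD,\varphi}$ in $H^p_{\cD,\varphi}$ interact correctly with $L^2$ arguments — this should be flagged explicitly rather than glossed.
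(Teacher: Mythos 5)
Your proposal is correct and follows essentially the same route as the paper's terse proof: take \eqref{eq:AMcR4.9}--\eqref{eq:AMcR5.2Norms} as already established, extend $\cS^\cD_\psi$ and $\cQ^\cD_{\tilde\psi}$ by density (using that $T^p\cap T^2$ is dense in $T^p$ and $E^p_{\cD,\varphi}$ is dense in $H^p_{\cD,\varphi}$, with the hypothesis $H^p_{\cD,\varphi}\cap L^2 = E^p_{\cD,\varphi}$ identifying the extensions with the a priori operators), and pass the identity and norm equivalences to the limit; your explicit handling of the infimum term, which cannot be treated as a continuous functional and must instead be attacked uniformly over all $T^p$-representations via \eqref{eq:AMcR4.9}, is a legitimate point the paper glosses over. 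One small repair: the theorem's $\tilde\psi$ is an \emph{arbitrary} nondegenerate element of $\Psi^{\beta}(S^o_\theta)$, whereas you select a specific reproducing $\tilde\psi$ from Proposition~\ref{Prop: AMcR_Remark2.1} so that $\cS^\cD_\psi\cQ^\cD_{\tilde\psi}=\mathsf{P}_{\overline{\Ran(\cD)}}$; to conclude for the given $\tilde\psi$ you should introduce a separate auxiliary reproducing function $\tilde\varphi$, prove surjectivity $H^p_{\cD,\varphi}=\cS^\cD_\psi(T^p)$ and the infimum upper bound via the representation $u=\cS^\cD_\psi(\cQ^\cD_{\tilde\varphi}u)$, and then invoke \eqref{eq:AMcR5.2Norms} (extended by density) to transfer from $\|\cQ^\cD_{\tilde\varphi}u\|_{T^p}$ to $\|\cQ^\cD_{\tilde\psi}u\|_{T^p}$.
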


\begin{proof}
In view of the remarks preceding the theorem, it remains to prove~\eqref{eq:HpEquivNorms}. It follows from \eqref{eq:AMcR5.2Sets} that $\|\cS^\cD_{\psi} U\|_{E^p_{\cD,\varphi}} \leq \|U\|_{T^p}$ for all $U\in T^p \cap T^2$, and so $\cS^\cD_{\psi}$ in $\mathcal{L}(T^2,L^2)$ extends by density to a unique operator in $\mathcal{L}(T^p,H^p_{\cD,\varphi})$. It follows from \eqref{eq:AMcR5.2Norms} that $\cQ^\cD_{\tilde\psi}$ in $\mathcal{L}(L^2,T^2)$ restricts to an operator in $\mathcal{L}(E^p_{\cD,\varphi},T^p)$, and since $E^p_{\cD,\varphi} = H^p_{\cD,\varphi} \cap L^2$, the density of $E^p_{\cD,\varphi}$ in $H^p_{\cD,\varphi}$ provides the unique extension of $\cQ^\cD_{\tilde\psi}$ in $\mathcal{L}(H^p_{\cD,\varphi},T^p)$. We then obtain  \eqref{eq:HpEquivNorms} by using the extended operators to appropriately extend~\eqref{eq:AMcR4.9}--\eqref{eq:AMcR5.2Norms}. This complete the proof.
\end{proof}

\begin{Rem}
In the context of Theorem~\ref{Thm: AMcR_4.9_5.2}, if the completion $H^p_{\cD,\psi}(\cV)$ of $E^p_{\cD,\psi}(\cV)$ in $L^p(\cV)$ exists, and $H^p_{\cD,\psi}(\cV)\cap L^2(\cV)=E^p_{\cD,\psi}(\cV)$, for \textit{some} nondegenerate $\psi\in\Psi_{\beta}(S^o_\theta)$, then \eqref{eq:HpEquivNorms} implies that these properties hold for \textit{all} nondegenerate $\psi\in\Psi_{\beta}(S^o_\theta)$. Therefore, we could adopt the notation in \cite{AMcR} whereby $H^p_{\cD}(\cV)$ denotes any of the equivalent Banach spaces $H^p_{\cD,\psi}(\cV)$. We found it convenient not to do this, however, given the technical nature of this article.
\end{Rem}

We now introduce atoms and molecules in order to show that $E^p_{\cD,\psi}(\cV) \subseteq L^p(\cV)$. 

\begin{Def}\label{moleculedef}
Suppose that $\cD$ satisfies \eqref{H1} and \eqref{H3} on $L^2(\cV)$ for some $m\in\N$. For $N\in\N$, a section $a\in L^2(\cV)$ is called an $H^1_\cD(\cV)$-$\textit{molecule of type }N$ when there exists a section $b \in \Dom(\cD^N)$ and a ball $B\subseteq M$ of radius $r(B)>0$ such that $a=\cD^N b$ and the following hold for all $k\in\N_0$:
\begin{enumerate}\setlength\itemsep{3pt}
\item $\|\ca_k(B)a\|_2 \leq 2^{-k} \mu(2^kB)^{-1/2}$;
\item $\|\ca_k(B)b\|_2 \leq r(B)^{m N} 2^{-k}\mu(2^kB)^{-1/2}$,
\end{enumerate}
where $\ca_0(B)=\ca_{B}$ and $\ca_k(B)=\ca_{2^{k}B\setminus 2^{k-1}B}$ for all $k\in\N$. An $H^1_\cD(\cV)$-$\textit{atom of type }N$ is defined in the same way, except that $a$ and $b$ are required to be supported on the ball $B$, which obviates (1) and (2) when $k\geq1$.
\end{Def}

The following proof uses a molecular characterisation obtained in~\cite[Section 6.1]{AMcR}.

\begin{Lem}\label{Lem: EpinLp}
Suppose that $M$ is a doubling metric measure space satisfying~\eqref{D} and that $\cD$ satisfies \eqref{H1}--\eqref{H3} on $L^2(\cV)$ for some $\omega\!\in\![0,\!\pi/2)$ and $m\in\N$. If $p\in[1,2]$, $\theta\in(\omega,\pi/2)$, $\beta>\kappa/2m$ and $\psi \in \Psi_{\beta}(S^o_\theta)$ is nondegenerate, then $E^p_{\cD,\psi}(\cV) \subseteq L^p(\cV)$.
\end{Lem}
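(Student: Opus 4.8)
The strategy is to decompose an arbitrary $u\in E^p_{\cD,\psi}(\cV)$ into a sum of $H^1_\cD(\cV)$-molecules of type $N$ (for suitable $N$) when $p=1$, and to interpolate between this endpoint and the $L^2$ estimate \eqref{eq:H2equiv} for $p\in(1,2]$. By Definition~\ref{AMcRDef5.1} there is $U\in T^p\cap T^2$ with $u=\cS_\psi^\cD U$ and $\|U\|_{T^p}\lesssim\|u\|_{E^p_{\cD,\psi}}$. First I would treat $p=1$. Apply Theorem~\ref{maintentatomic} to write $U=\sum_j\lambda_j A_j$ with $T^1$-atoms $A_j$ supported on tents $T(B_j)$, convergence in $T^1\cap T^2$ (and a.e.), and $\|(\lambda_j)_j\|_{\ell^1}\eqsim\|U\|_{T^1}$. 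Since $\cS_\psi^\cD\in\cL(T^2,L^2)$ is bounded and the partial sums converge in $T^2$, we get $u=\sum_j\lambda_j\,\cS_\psi^\cD A_j$ with convergence in $L^2$. The key point is then that each $\cS_\psi^\cD A_j$ is (a fixed multiple of) an $H^1_\cD(\cV)$-molecule of type $N$, for any $N$ with $\beta>\kappa/2m$ large enough relative to $N$; this is precisely the content of the molecular decomposition machinery from \cite[Section~6.1]{AMcR}, which in turn rests on the off-diagonal estimates \eqref{eq:PfcOD}. Concretely, writing $\psi=z^N\tilde\psi$ for some $\tilde\psi\in\Psi_{\beta-N}(S^o_\theta)$ (legitimate once $\beta>N$, which we may assume by first choosing $N$ and then noting that $\Psi_\beta\subseteq\Psi_{\beta'}$-type reductions via \eqref{eq:AMcR5.2Norms} let us pass to such a $\psi$ up to norm equivalence), one sets $b_j=\int_0^\infty (s^m\cD)^{-N}\psi(s^m\cD)(A_j)_s\,\tfrac{ds}{s}\cdot(\text{appropriate }s\text{-power})$ so that $\cS_\psi^\cD A_j=\cD^N b_j$, and verifies the annular $L^2$-bounds (1) and (2) of Definition~\ref{moleculedef} using \eqref{eq:PfcOD} to absorb the volume growth $V(y,t)$ in the $T^1$-atom normalisation against the polynomial off-diagonal decay of exponent $\sigma-\delta$ with $\sigma=\beta$ (or $\beta-N$).

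Granting that $\cS_\psi^\cD A_j$ is a constant multiple of an $H^1_\cD(\cV)$-molecule, the molecular theory of \cite[Section~6.1]{AMcR} gives $\|\cS_\psi^\cD A_j\|_{L^1}\lesssim 1$ uniformly in $j$, whence $\sum_j\lambda_j\cS_\psi^\cD A_j$ converges in $L^1(\cV)$ as well (since $\sum_j|\lambda_j|<\infty$), and its $L^1$-limit must agree with the $L^2$-limit $u$ on the overlap, so $u\in L^1(\cV)$ with $\|u\|_{L^1}\lesssim\|(\lambda_j)_j\|_{\ell^1}\eqsim\|U\|_{T^1}\lesssim\|u\|_{E^1_{\cD,\psi}}$. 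That settles $E^1_{\cD,\psi}(\cV)\subseteq L^1(\cV)$. For $p\in(1,2)$ I would interpolate: the map $\cS_\psi^\cD$ is bounded $T^1\cap T^2\to L^1$ by the case just proved and bounded $T^2\to L^2$ by \eqref{eq:H2equiv}; by \eqref{TpInterp} the complex interpolation space $[T^1,T^2]_\theta=T^{p}$ for the appropriate $\theta$, and $[L^1,L^2]_\theta=L^p$, so $\cS_\psi^\cD$ is bounded $T^p\cap T^2\to L^p$ (the intersection with $T^2$ is needed only to stay in the domain where $\cS_\psi^\cD$ is already defined, and density lets the interpolated bound pass to it). Equivalently, one can simply note that the $T^1$-atomic decomposition combined with $L^2$-orthogonality bounds gives an $L^p$ bound on each molecule directly via the annular estimates (1)–(2) by a standard computation, avoiding interpolation altogether; either route yields $\|u\|_{L^p}\lesssim\|u\|_{E^p_{\cD,\psi}}$, i.e. $E^p_{\cD,\psi}(\cV)\subseteq L^p(\cV)$.

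\textbf{Main obstacle.} The technical heart is verifying that $\cS_\psi^\cD A$ genuinely satisfies the molecular bounds of Definition~\ref{moleculedef}, with the correct normalisation constants and the correct relation between the decay exponent $\beta$, the volume-growth exponent $\kappa$, the order $m$, and the number of moments $N$. This requires pairing a $T^1$-atom supported on $T(B)$ against $\psi(s^m\cD)$ and splitting $M$ into the annuli $2^kB\setminus 2^{k-1}B$: on each annulus one integrates in $s\in(0,r(B))$ (the tent restricts $s$), uses \eqref{eq:PfcOD} with $E=2^kB\setminus 2^{k-1}B$, $F=B$ to get a factor $\langle s^m/\rho(E,F)^m\rangle^{\sigma-\delta}\lesssim (s/2^k r(B))^{\sigma-\delta}$ for $k\geq2$, integrates the resulting $s$-power, and then converts the $T^2$-normalisation $\|A\|_{T^2}\le\mu(B)^{-1/2}$ together with the doubling bound $V(y,s)\gtrsim (s/ r(B))^\kappa$ relative to $\mu(B)$ into the required $2^{-k}\mu(2^kB)^{-1/2}$ bound; this is exactly where $\beta>\kappa/2m$ is consumed, and it is the place where the argument of \cite{AMcR} applies essentially verbatim once the $\Psi(S^o_\theta)$-class exponents are taken from \eqref{eq:PfcOD}. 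I expect everything else — the passage between the abstract $\cS_\psi^\cD$ and the concrete integral, the identification of $L^1$- and $L^2$-limits, and the interpolation step — to be routine.
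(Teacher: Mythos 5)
Your proposal follows essentially the same route as the paper's proof: reduce to the case $p=1$ via the norm equivalence \eqref{eq:AMcR5.2Norms} and a specially chosen $\psi$, apply the atomic decomposition of $T^1$ from Theorem~\ref{maintentatomic}, use the fact (cited from \cite[Lemma~6.7]{AMcR}) that $\cS^\cD_\psi$ maps $T^1$-atoms to $H^1_\cD(\cV)$-molecules with a uniform $L^1$ bound, identify the $L^1$- and $L^2$-limits through $L^1_{\mathrm{loc}}$, and then interpolate via \eqref{TpInterp} for $p\in(1,2)$. The only difference is cosmetic: the paper simply cites \cite[Lemma~6.7]{AMcR} for the molecular mapping property, whereas you sketch how one would prove it; both are the same argument.
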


\begin{proof}
When $p=2$, the result holds by~\eqref{eq:H2equiv}. When $p\in[1,2)$, it suffices to prove the result for a fixed nondegenerate $\psi$ in $\Psi_{\beta}(S^o_\theta)$ by \eqref{eq:AMcR5.2Norms}. Therefore, we fix $N\in\N$ and use the construction in \cite[Lemma~6.7]{AMcR} to fix a nondegenerate $\psi$ in $\Psi_{\beta}(S^o_\theta)$ such that $\cS^\cD_\psi (A)$ is an $H^1_\cD$-molecule of type $N$ whenever $A$ is a $T^1$-atom.

Now consider when $p=1$. For all $H^1_\cD$-molecules $a$ of type $N$, note that 
\begin{equation}\label{eq: L1uniform.mol}
\|a\|_1 \leq  \sum_{k=0}^\infty \mu(2^kB)^{{1}/{2}}\|\ca_k(B) a\|_2 \leq 2.
\end{equation}
Suppose that $u\in E^1_{\cD,\psi}$ and $V \in T^{1}\cap T^{2}$ such that $u=\cS^\cD_{\psi}V$ and $\|V\|_{T^1} \leq 2 \|u\|_{E^1_{\cD,\psi}}$. The atomic characterisation of $T^{1}$ in Theorem~\ref{maintentatomic} provides a sequence $(\lambda_j)_j$ in $\ell^1$ and a sequence $(A_j)_j$ of $T^{1}$-atoms such that $\sum_j\lambda_j A_j$ converges to $V$ in $T^{1}$ and $T^{2}$, and $\|(\lambda_j)_j\|_{\ell^1}\eqsim \|V\|_{T^1}$. The operator $\cS^\cD_{\psi}$ in $\mathcal{L}(T^2,L^2)$ is bounded from $(T^1\cap T^2, \|\cdot\|_{T^1})$ into $E^1_{\cD,\psi}$, by the definition of $E^1_{\cD,\psi}$, so $\sum_j \lambda_j \cS^\cD_{\psi} A_j$ converges to $u$ in $E^1_{\cD,\psi}$ and $L^2$. Now recall that $\psi$ has the property whereby each $\cS^\cD_{\psi} A_j$ is an $H^1_\cD$-molecule of type $N$, so in accordance with \eqref{eq: L1uniform.mol}, the sequence $(\cS^\cD_{\psi} A_j)_j$ is uniformly bounded in $L^1$, and as such, there exists $\tilde u$ in $L^1$ such that $\sum_j \lambda_j \cS^\cD_{\psi} A_j$ converges to $\tilde u$ in $L^1$. We must have $u =\tilde u \in L^1$, since $L^1$ and $L^2$ are embedded in $L^1_\text{loc}$, and so $\sum_j \lambda_j \cS^\cD_{\psi} A_j$ converges to $u$ in~$L^1$ with $\|u\|_1 = \lim_{n\rightarrow\infty} \|\sum_{j=1}^n \lambda_j \cS^\cD_{\psi} A_j\|_1 \lesssim \|(\lambda_j)_j\|_{\ell^1} \lesssim \|V\|_{T^1} \lesssim \|u\|_{E^1_{\cD,\psi}}$. This completes the proof when $p=1$.

Now consider when $p\in(1,2)$. We have shown that $E^1_{\cD,\psi} \subseteq L^1$, so by the definition of $E^1_{\cD,\psi}$, it follows that $\|\cS^\cD_{\psi} U\|_1 \lesssim \|\cS^\cD_{\psi} U\|_{E^1_{\cD,\psi}} \leq \|U\|_{T^1}$ for all $U\in T^1 \cap T^2$. Therefore, the operator $\cS^\cD_{\psi}$ in $\mathcal{L}(T^2,L^2)$ has an extension in $\mathcal{L}(T^1,L^1)$, and then by the interpolation of tent spaces in \eqref{TpInterp}, this extension is also in $\mathcal{L}(T^p,L^p)$. It follows that $E^p_{\cD,\psi} \subseteq L^p$, since for each $u\in E^p_{\cD,\psi}$, there exists $V \in T^{p}\cap T^{2}$ such that $u=\cS^\cD_{\psi}V$ and $\|V\|_{T^p} \leq 2 \|u\|_{E^p_{\cD,\psi}}$, hence $\|u\|_p = \|\cS^\cD_{\psi} V\|_p \lesssim \|V\|_{T^p} \lesssim \|u\|_{E^p_{\cD,\psi}}$.
\end{proof}

The proof of Lemma~\ref{Lem: EpinLp} shows that for each $N\in\N$ and $u\in E^1_{\cD,\psi}(\cV)$, there exists a sequence $(\lambda_j)_j$ in~$\ell^1$ and a sequence $(a_j)_j$ of $H^1_\cD(\cV)$-molecules of type $N$ such that $\sum_j\lambda_j a_j$ converges to $u$ in $E^1_{\cD,\psi}(\cV)$ and $L^1(\cV)$ with $\|(\lambda_j)_j\|_{\ell^1} \lesssim \|u\|_{E^1_{\cD,\psi}}$. Although this characterisation extends to completions of $E^1_{\cD,\psi}(\cV)$ (see Theorem~\ref{AMcRThm6.2}), it does not seem to guarantee that the completion of $E^1_{\cD,\psi}(\cV)$ in $L^1(\cV)$ exists. We introduce hypothesis \ref{SFinLq} on $\cD$ in the next theorem for this reason.

\begin{Thm}\label{Thm: H1injectivity} 
Suppose that $M$ is a doubling metric measure space satisfying~\eqref{D} and that $\cD$ satisfies \eqref{H1}--\eqref{H3} on $L^2(\cV)$ for some $\omega\in[0,\pi/2)$ and $m\in\N$. If $1\leq q\leq p \leq 2$, $\theta\in(\omega,\pi/2)$, $\beta>\kappa/2m$, $\psi\in\Psi_{\beta}(S^o_\theta)$ is nondegenerate and
\begin{equation}\tag*{(H4)$_{\Psi}$}\label{SFinLq}
\begin{minipage}[c]{0.85\textwidth}
there exists a nondegenerate function ${\tilde\psi} \in \Psi^{\beta}(S^o_\theta)$ such that the set \\
$\{F \in T^2\cap T^{q'} : \cS^{\cD^*}_{\tilde\psi^*} F \in L^{q'}(\cV)\}$ is weak-star dense in $T^{p'}(\cV_+)$,
\end{minipage}
\end{equation}
where $1/q+1/q'=1$, then the completion $H^p_{\cD,\psi}(\cV)$ of $E^p_{\cD,\psi}(\cV)$ in $L^p(\cV)$ exists. Moreover, it holds that $H^p_{\cD,\psi}(\cV)\cap L^2(\cV)=E^p_{\cD,\psi}(\cV)$.
\end{Thm}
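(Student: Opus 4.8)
The plan is to establish the existence of the completion via condition~(3) of Proposition~\ref{Prop: relativecomp}: I will show that every Cauchy sequence $(u_n)_n$ in $E^p_{\cD,\psi}(\cV)$ that converges to $0$ in $L^p(\cV)$ also converges to $0$ in $E^p_{\cD,\psi}(\cV)$. Three ingredients drive this: the norm equivalence $\|v\|_{E^p_{\cD,\psi}}\eqsim\|\cQ^\cD_{\tilde\psi}v\|_{T^p}$ from \eqref{eq:AMcR5.2Norms} in Theorem~\ref{Thm: AMcR_4.9_5.2}, applied with the nondegenerate $\tilde\psi\in\Psi^\beta(S^o_\theta)$ furnished by \ref{SFinLq}; the tent-space duality $T^{p'}(\cV_+)=(T^p(\cV_+))^*$ realized by the pairing $\langle\cdot,\cdot\rangle_{T^2}$ in \eqref{TpDual}; and the adjoint identity $\cS^{\cD^*}_{\tilde\psi^*}=(\cQ^\cD_{\tilde\psi})^*$ between operators on $L^2(\cV)$ and $T^2(\cV_+)$, which holds because $\cD^*$ inherits \eqref{H1}--\eqref{H3} from $\cD$, the off-diagonal bound \eqref{H3} being symmetric under taking adjoints. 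Two elementary inclusions will be used, coming from $1\le q\le p\le2$, i.e. $2\le p'\le q'$: log-convexity of the $L^r$-norms gives $L^{q'}(\cV)\cap L^2(\cV)\subseteq L^{p'}(\cV)$, and tent-space interpolation \eqref{TpInterp} gives $T^{q'}(\cV_+)\cap T^2(\cV_+)\subseteq T^{p'}(\cV_+)$.

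Given $(u_n)_n$ as above, \eqref{eq:AMcR5.2Norms} shows $(\cQ^\cD_{\tilde\psi}u_n)_n$ is Cauchy in $T^p$, hence converges to some $W\in T^p$. Let $F$ belong to the set $\mathcal F:=\{F\in T^2\cap T^{q'}:\cS^{\cD^*}_{\tilde\psi^*}F\in L^{q'}(\cV)\}$ of \ref{SFinLq}, and put $v:=\cS^{\cD^*}_{\tilde\psi^*}F\in L^{q'}(\cV)\cap L^2(\cV)\subseteq L^{p'}(\cV)$. Since $u_n\in L^2(\cV)$, the adjoint identity gives $\langle\cQ^\cD_{\tilde\psi}u_n,F\rangle_{T^2}=\langle u_n,v\rangle$; the right-hand side tends to $0$ because $u_n\to0$ in $L^p$ and $v\in L^{p'}$, while the left-hand side tends to $\langle W,F\rangle_{T^2}$ because $\cQ^\cD_{\tilde\psi}u_n\to W$ in $T^p$ and $F\in T^{q'}\cap T^2\subseteq T^{p'}$. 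Hence $\langle W,F\rangle_{T^2}=0$ for every $F\in\mathcal F$. Since $W\in T^p$ defines a weak-star continuous functional on $T^{p'}=(T^p)^*$ and $\mathcal F$ is weak-star dense in $T^{p'}$ by \ref{SFinLq}, it follows that $\langle W,F\rangle_{T^2}=0$ for all $F\in T^{p'}$; testing against compactly supported bounded sections of $\cV_+$ forces $W=0$. Therefore $\|u_n\|_{E^p_{\cD,\psi}}\eqsim\|\cQ^\cD_{\tilde\psi}u_n\|_{T^p}\to0$, and Proposition~\ref{Prop: relativecomp} yields the existence of the completion $H^p_{\cD,\psi}(\cV)$ of $E^p_{\cD,\psi}(\cV)$ in $L^p(\cV)$.

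For the equality $H^p_{\cD,\psi}(\cV)\cap L^2(\cV)=E^p_{\cD,\psi}(\cV)$, the inclusion $\supseteq$ is immediate since $E^p_{\cD,\psi}=\cS^\cD_\psi(T^p\cap T^2)$ lies both in its completion and in $L^2(\cV)$. For $\subseteq$, I take $u\in H^p_{\cD,\psi}\cap L^2$ and a Cauchy sequence $(u_n)_n$ in $E^p_{\cD,\psi}$ with $u_n\to u$ in $L^p$, so again $\cQ^\cD_{\tilde\psi}u_n\to W$ in $T^p$ for some $W$; repeating the computation above with $u$ in place of $0$ and using $u\in L^2$ to write $\langle u,\cS^{\cD^*}_{\tilde\psi^*}F\rangle=\langle\cQ^\cD_{\tilde\psi}u,F\rangle_{T^2}$ yields $\langle W-\cQ^\cD_{\tilde\psi}u,F\rangle_{T^2}=0$ for all $F\in\mathcal F$. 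Exploiting the weak-star continuity of $\langle W,\cdot\rangle_{T^2}$ on $T^{p'}$ against the fact that $\cQ^\cD_{\tilde\psi}u$ is a genuine element of $T^2$, one upgrades this to $\cQ^\cD_{\tilde\psi}u=W\in T^p$; combined with a short argument that $u\in\overline{\Ran(\cD)}$ — using that $\cS^\cD_\psi$ maps $T^p$ into sections pairing trivially with $\Nul(\cD^*)\cap L^{p'}$, together with a density argument inside $\Nul(\cD^*)$ — the characterisation \eqref{eq:AMcR5.2Sets} then gives $u\in\cS^\cD_\psi(T^p\cap T^2)=E^p_{\cD,\psi}$.

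The main obstacle is this last identification in the $\subseteq$ part. Because \ref{SFinLq} asserts only \emph{weak-star} density of $\mathcal F$ in $T^{p'}$ — not norm density, and $\mathcal F$ need not contain the compactly supported sections that separate $L^2_{\text{loc}}(\cV_+)$ — one cannot conclude $W=\cQ^\cD_{\tilde\psi}u$ directly from the equality of the two $T^2$-pairings on $\mathcal F$; moreover $\cS^{\cD^*}_{\tilde\psi^*}$ need not map $T^{p'}$ boundedly into $L^{p'}$, which is precisely why the hypothesis is phrased through a weak-star dense class rather than through such an estimate (the latter would amount to an ultracontractivity-type bound). The resolution is to play weak-star continuity of $\langle W,\cdot\rangle_{T^2}$ against the honest-function nature of $\cQ^\cD_{\tilde\psi}u\in T^2$, using the explicit $L^{q'}$-control that \ref{SFinLq} supplies along $\mathcal F$, and to settle $u\in\overline{\Ran(\cD)}$ without any smoothing estimate; this is where the argument requires the most care.
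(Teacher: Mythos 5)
Your argument for the existence of the completion (the first assertion) is correct and is exactly the paper's argument: reduce via condition~(3) of Proposition~\ref{Prop: relativecomp}, use the norm equivalence~\eqref{eq:AMcR5.2Norms}, pass the limit $U:=\lim_n\cQ^\cD_{\tilde\psi}u_n$ in $T^p$ through the duality pairing with $F\in\mathscr{E}$, split off $\langle\cQ^\cD_{\tilde\psi}u_n,F\rangle_{T^2}=\langle u_n,\cS^{\cD^*}_{\tilde\psi^*}F\rangle$ and let $n\to\infty$ using $u_n\to0$ in $L^p$ and $\cS^{\cD^*}_{\tilde\psi^*}F\in L^{p'}$, and finally invoke weak-star density to kill $U$. (Your closing remark about ``testing against compactly supported bounded sections'' is redundant: once $\langle U,F\rangle_{T^2}=0$ for all $F\in T^{p'}=(T^p)^*$, Hahn--Banach gives $U=0$ directly.)

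For the ``Moreover'' part, you have correctly mirrored the paper's computation, which yields $\langle U-\cQ^\cD_{\tilde\psi}u,F\rangle_{T^2}=0$ for every $F\in\mathscr{E}$; and your worry that one cannot then conclude $U=\cQ^\cD_{\tilde\psi}u$ straight from weak-star density of $\mathscr{E}$ in $T^{p'}$ is a fair reading, since $\cQ^\cD_{\tilde\psi}u$ is only known to lie in $T^2$, and weak-star density is tuned to separate $T^p$. The paper itself is terse here (``The preceding convergence arguments then show that $U=\cQ^\cD_{\tilde\psi}u$''); in all the applications in the paper, \ref{SFinLq} is verified through the stronger sufficient condition of Remark~\ref{Rem: Tc}, namely $T^2_c(\cV_+)\subseteq\mathscr{E}$, under which $U-\cQ^\cD_{\tilde\psi}u\in L^2_{\text{loc}}(\cV_+)$ annihilating $\mathscr{E}$ does force $U=\cQ^\cD_{\tilde\psi}u$. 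So you have identified a genuine delicate point, but you have not filled it; your final paragraph gestures at ``playing weak-star continuity against the honest-function nature of $\cQ^\cD_{\tilde\psi}u$'' without specifying how, so as written the proposal does not complete the second assertion.

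Your suggested route to $u\in\overline{\Ran(\cD)}$ (pairing with $\Nul(\cD^*)\cap L^{p'}$ plus a density claim inside $\Nul(\cD^*)$) has its own unaddressed gap: there is no reason given that $\Nul(\cD^*)\cap L^{p'}$ is $L^2$-dense in $\Nul(\cD^*)$. It is in fact unnecessary: once $U=\cQ^\cD_{\tilde\psi}u$ is established, $\cQ^\cD_{\tilde\psi}u=\cQ^\cD_{\tilde\psi}\mathsf{P}_{\overline{\Ran(\cD)}}u\in T^p$, so $\mathsf{P}_{\overline{\Ran(\cD)}}u\in E^p_{\cD,\psi}$ by \eqref{eq:AMcR5.2Sets}, and \eqref{eq:AMcR5.2Norms} gives $\|u_n-\mathsf{P}_{\overline{\Ran(\cD)}}u\|_{E^p_{\cD,\psi}}\eqsim\|\cQ^\cD_{\tilde\psi}u_n-U\|_{T^p}\to0$, hence $u_n\to\mathsf{P}_{\overline{\Ran(\cD)}}u$ in $L^p$. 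Since also $u_n\to u$ in $L^p$, uniqueness of $L^p$-limits forces $u=\mathsf{P}_{\overline{\Ran(\cD)}}u\in\overline{\Ran(\cD)}$, and then $u\in E^p_{\cD,\psi}$ follows from \eqref{eq:AMcR5.2Sets}.
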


\begin{proof}
Lemma~\ref{Lem: EpinLp} shows that $E^p_{\cD,\psi} \subseteq L^p$, so the existence of the completion of $E^p_{\cD,\psi}$ in $L^p$ will follow by proving (3) in Proposition~\eqref{Prop: relativecomp} with $X=E^p_{\cD,\psi}$ and $Y=L^p$. To this end, let $(u_n)_n$ denote a Cauchy sequence in $E^p_{\cD,\psi}$ that converges to 0 in $L^p$.  We claim that $(u_n)_n$ converges to 0 in $E^p_{\cD,\psi}$. To see this, fix ${\tilde\psi}$ in $\Psi^{\beta}(S^o_\theta)$ satisfying~\ref{SFinLq} so that $\mathscr{E}:=\{F \in T^2\cap T^{q'} : \cS^{\cD^*}_{\tilde\psi^*} F \in L^{q'}\}$ is weak-star dense in $T^{p'}$. For all $n\in\N$, we have by~\eqref{eq:AMcR5.2Norms} that
\begin{equation}\label{eq:embeq}
\|u_n\|_{E^p_{\cD,\psi}} \eqsim \|\cQ^\cD_{\tilde\psi} u_n\|_{T^p},
\end{equation}
and since $(u_n)_n$ is Cauchy in $E^p_{\cD,\psi}$, there exists $U$ in $T^{p}$ such that $\cQ^\cD_{\tilde\psi} u_n$ converges to $U$ in $T^{p}$. Using the duality pairing in~\eqref{TpDual}, for all $n\in\N$ and $F\in\mathscr{E}$, we have
\begin{align*}
|\langle U, F \rangle_{T^2}| 
&\leq |\langle U-\cQ^\cD_{\tilde\psi} u_n, F \rangle_{T^2}| + |\langle\cQ^\cD_{\tilde\psi} u_n, F\rangle_{T^2}| \\
&\lesssim \|U-\cQ^\cD_{\tilde\psi} u_n\|_{T^{p}} \|F\|_{T^{p'}} + \|u_n\|_{L^p}\|\cS^{\cD^*}_{\tilde\psi^*} F\|_{L^{p'}},
\end{align*}
since $2\leq p' \leq q'$ ensures that $L^2\cap L^{q'} \subseteq L^{p'}$ and $T^2\cap T^{q'} \subseteq T^{p'}$. Moreover, since $\|\cS^{\cD^*}_{\tilde\psi^*} F\|_{L^{p'}}<\infty$ and $\|F\|_{T^{p'}}<\infty$, the preceding convergence results imply that
\begin{equation}\label{T0}
\langle U,F\rangle_{T^2} = 0 \qquad  \forall F \in \mathscr{E}.
\end{equation}
Then, since $U\in T^p$ and $\mathscr{E}$ is weak-star dense in $T^{p'}$, it follows that $\langle U,F\rangle_{T^2} = 0$ for all $F\in T^{p'}$, hence $U=0$ and $(u_n)_n$ converges to 0 in $E^p_{\cD,\psi}$, as claimed. This proves that the completion $H^p_{\cD,\psi}$ of $E^p_{\cD,\psi}$ in $L^p$ exists.

The inclusion $E^p_{\cD,\psi} \subseteq H^p_{\cD,\psi} \cap L^2$ holds by~\eqref{eq:AMcR5.2Sets}. To prove the reverse inclusion, suppose that $u\in H^p_{\cD,\psi} \cap L^2$. The density of $E^p_{\cD,\psi}$ in $H^p_{\cD,\psi}$ provides a sequence $(u_n)_n$ in $E^p_{\cD,\psi}$ that converges to $u$ in $H^p_{\cD,\psi}$. This sequence also converges in  $L^p$ because $H^p_{\cD,\psi}\subseteq L^p$, and as in the previous paragraph, there exists $U$ in $T^p$ such that $\cQ^\cD_{\tilde\psi} u_n$ converges to $U$ in $T^{p}$. For all $n\in\N$ and $F\in\mathscr{E}$, we have
\begin{align*}
|\langle U-\cQ^\cD_{\tilde\psi} u, F \rangle_{T^2}| 
&\leq |\langle U-\cQ^\cD_{\tilde\psi} u_n, F \rangle_{T^2}| + |\langle\cQ^\cD_{\tilde\psi} u_n-\cQ^\cD_{\tilde\psi} u, F\rangle_{T^2}| \\
&\lesssim \|U-\cQ^\cD_{\tilde\psi} u_n\|_{T^{p}} \|F\|_{T^{p'}} + \|u_n-u\|_{p}\|\cS^{\cD^*}_{\tilde\psi^*} F\|_{p'}.
\end{align*}
The preceding convergence arguments then show that $U=\cQ^\cD_{\tilde\psi} u\in T^p\cap T^2$, and since $\|\cQ^\cD_{\tilde\psi} u\|_{T^p} \eqsim \|u\|_{E^p_{\cD,\psi}}$, we conclude that $u\in E^p_{\cD,\psi}$, as required.
\end{proof}

\begin{Rem}\label{Rem: Tc}
Note that \ref{SFinLq} holds whenever $\cS^{\cD^*}_{\tilde\psi^*}(T^2_c(\cV_+)) \subseteq L^{q'}(\cV)$, where $T^2_c(\cV_+)$ denotes the space of compactly supported sections in $T^2(\cV_+)$. This is because $T^2_c(\cV_+)$ is weak-star dense in $T^{p'}(\cV_+)$ for all $p\in[1,2]$. To see this, let $(K_n)_n$ denote an increasing sequence of compact sets that exhaust $M\times\R_+$. For all $F\in T^p(\cV_+)$ and $G\in T^{p'}(\cV_+)$, we have $\int_0^\infty\int_M |\langle F_t(x),G_t(x) \rangle_x|\, d\mu(x)dt/t \lesssim \|F\|_{T^p}\|G\|_{T^{p'}}$ by duality (see \eqref{TpDual}). The dominated convergence theorem then implies that $\langle F, \mathbf{1}_{K_n}G \rangle_{T^2}$ converges to $\langle F,G \rangle_{T^2}$, which proves the weak-star density, since $\mathbf{1}_{K_n}G\in T^2_c(\cV_+)$.
\end{Rem}

\subsection{Molecular Theory}\label{section: Molecules}
We defined $H^1_\cD(\cV)$-molecules and atoms in Definition~\ref{moleculedef}. The molecular characterisation of $H^1_{\cD,\psi}(\cV)$ below is based on the characterisation obtained in~\cite[Theorem 6.2]{AMcR}. It is convenient to first introduce the following spaces.

\begin{Def}\label{AMcRDef6.1}
Suppose that $\cD$ satisfies \eqref{H1} and \eqref{H3} on $L^2(\cV)$ for some $m\in\N$. For $N\in\N$, the Banach space $H^1_{\cD,\text{mol}(N)}(\cV)$ is the set of all $u$ in $L^1(\cV)$ for which there exist a sequence $(\lambda_j)_j$ in $\ell^1$ and a sequence $(a_j)_j$ of $H^1_\cD$-molecules of type $N$ such that $\sum_j\lambda_j a_j$ converges to $u$ in $L^1(\cV)$, together with the norm
\[
\|u\|_{H^1_{\cD,\text{mol}(N)}} := \inf \{\|(\lambda_j)_j\|_{\ell^1} : \textstyle{\sum_j}\lambda_j a_j\text{ converges to $u$ in $L^1$}\}.
\]
The Banach space $H^1_{\cD,\text{at}(N)}(\cV)$ is defined by replacing molecules with atoms.
\end{Def}

The $L^1(\cV)$ convergence required in the above definition ensures that $H^1_{\cD,\text{mol}(N)}(\cV)$ and $H^1_{\cD,\text{at}(N)}(\cV)$ are complete. This is because molecules and atoms are uniformly bounded in $L^1(\cV)$. In particular, if $(u_n)_n$ is a sequence in $H^1_{\cD,\text{mol}(N)}(\cV)$ such that $\sum_n \|u_n\|_{H^1_{\cD,\text{mol}(N)}}$ is finite, then the uniform $L^1(\cV)$ bound for molecules and the dominated convergence theorem imply that $\sum_n u_n$ converges in the $H^1_{\cD,\text{mol}(N)}(\cV)$ norm to some $u\in H^1_{\cD,\text{mol}(N)}(\cV)$, hence $H^1_{\cD,\text{mol}(N)}(\cV)$ is complete. The $L^1(\cV)$ convergence requirement also distinguishes these spaces from those in the literature that are defined as an abstract completion of a molecular or atomic space on which $L^2(\cV)$ convergence is required. This is discussed further in Remark~\ref{Rem: Emol}.

The embedding $H^1_{\cD,\psi}(\cV) \subseteq L^1(\cV)$ is not required to define the molecular space nor the atomic space, since ${H^1_{\cD,\text{at}(N)}(\cV)\subseteq H^1_{\cD,\text{mol}(N)}(\cV) \subseteq L^1(\cV)}$ is automatic. It is only when the embedding of $H^1_{\cD,\psi}(\cV)$ in $L^1(\cV)$ holds, however, that we can establish the following connection.

\begin{Thm}\label{AMcRThm6.2}
Suppose that $M$ is a doubling metric measure space satisfying \eqref{D} and that $\cD$ satisfies \eqref{H1}--\eqref{H3} on $L^2(\cV)$ for some $\omega\in[0,\pi/2)$ and $m \in \N$. Also, assume that for some $\theta\in(\omega,\pi/2)$, $\beta>\kappa/2m$ and nondegenerate $\psi\in\Psi_{\beta}(S^o_\theta)$, the completion $H^1_{\cD,\psi}(\cV)$ of $E^1_{\cD,\psi}(\cV)$ in $L^1(\cV)$ exists, and $H^1_{\cD,\psi}(\cV)\cap L^2(\cV)=E^1_{\cD,\psi}(\cV)$. It follows that if $N\in\N$ and $N>\kappa/2m$, then $H^1_{\cD,\psi}(\cV)=H^1_{\cD,\text{mol}(N)}(\cV)$.
\end{Thm}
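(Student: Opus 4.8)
The plan is to prove the two inclusions $H^1_{\cD,\psi}(\cV)\subseteq H^1_{\cD,\text{mol}(N)}(\cV)$ and $H^1_{\cD,\text{mol}(N)}(\cV)\subseteq H^1_{\cD,\psi}(\cV)$ separately. For the forward inclusion, I would first record the molecular decomposition already established in the discussion following the proof of Lemma~\ref{Lem: EpinLp}: for $u\in E^1_{\cD,\psi}(\cV)$ there is a sequence $(\lambda_j)_j$ in $\ell^1$ and $H^1_\cD$-molecules $(a_j)_j$ of type $N$ with $\sum_j\lambda_ja_j\to u$ in both $E^1_{\cD,\psi}(\cV)$ and $L^1(\cV)$ and $\|(\lambda_j)_j\|_{\ell^1}\lesssim\|u\|_{E^1_{\cD,\psi}}$. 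This immediately shows $E^1_{\cD,\psi}(\cV)\subseteq H^1_{\cD,\text{mol}(N)}(\cV)$ with $\|u\|_{H^1_{\cD,\text{mol}(N)}}\lesssim\|u\|_{E^1_{\cD,\psi}}$. Then I would extend this to all of $H^1_{\cD,\psi}(\cV)$ by density: given $u\in H^1_{\cD,\psi}(\cV)$, choose $(v_n)_n\subseteq E^1_{\cD,\psi}(\cV)$ with $v_n\to u$ in $H^1_{\cD,\psi}(\cV)$, pass to a subsequence so that $\sum_n\|v_{n+1}-v_n\|_{H^1_{\cD,\psi}}<\infty$, write $u=v_1+\sum_n(v_{n+1}-v_n)$, apply the molecular decomposition to each difference, and combine the resulting molecular series. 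The uniform $L^1(\cV)$ bound for molecules (inequality~\eqref{eq: L1uniform.mol}) together with dominated convergence guarantees that the combined series converges in $L^1(\cV)$ to $u$ (using that $H^1_{\cD,\psi}(\cV)\subseteq L^1(\cV)$ so the two limits agree), with $\ell^1$-norm controlled by $\|u\|_{H^1_{\cD,\psi}}$.

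For the reverse inclusion, the key point is to show that every $H^1_\cD$-molecule $a$ of type $N$ lies in $H^1_{\cD,\psi}(\cV)$ with $\|a\|_{H^1_{\cD,\psi}}\lesssim1$, uniformly over all molecules. Granting this, if $u\in H^1_{\cD,\text{mol}(N)}(\cV)$ with $u=\sum_j\lambda_ja_j$ converging in $L^1(\cV)$, then $\sum_j\lambda_ja_j$ is absolutely summable in $H^1_{\cD,\psi}(\cV)$, hence converges there to some $\tilde u\in H^1_{\cD,\psi}(\cV)$; since $H^1_{\cD,\psi}(\cV)\subseteq L^1(\cV)$ and both $L^1(\cV)$ and $H^1_{\cD,\psi}(\cV)$ embed in $L^1_{\text{loc}}(\cV)$, the two limits coincide and $u=\tilde u\in H^1_{\cD,\psi}(\cV)$ with $\|u\|_{H^1_{\cD,\psi}}\lesssim\|(\lambda_j)_j\|_{\ell^1}$, and infimizing gives the norm bound. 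To prove the uniform bound for a single molecule $a=\cD^N b$, I would use the Calder\'{o}n reproducing formula from Proposition~\ref{Prop: AMcR_Remark2.1}: choosing $\tilde\psi\in\Psi_\sigma^\tau(S^o_\theta)$ with $\sigma$ large (in particular $\sigma>N$) dual to $\psi$, one has $a=\cS^\cD_\psi\cQ^\cD_{\tilde\psi}a$ on $\overline{\Ran(\cD)}$, and since $a\in L^2(\cV)\cap L^1(\cV)$ it follows that $a\in E^1_{\cD,\psi}(\cV)$ provided $\cQ^\cD_{\tilde\psi}a\in T^1(\cV_+)$; then $\|a\|_{H^1_{\cD,\psi}}=\|a\|_{E^1_{\cD,\psi}}\eqsim\|\cQ^\cD_{\tilde\psi}a\|_{T^1}$ by~\eqref{eq:AMcR5.2Norms}. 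So the task reduces to the tent-space estimate $\|\cQ^\cD_{\tilde\psi}a\|_{T^1(\cV_+)}\lesssim1$ uniformly over type-$N$ molecules, which is precisely the kind of estimate established in~\cite[Theorem~6.2]{AMcR}; writing $\tilde\psi(z)=z^N\eta(z)$ with $\eta\in\Psi_{\sigma-N}(S^o_\theta)$, one has $(\cQ^\cD_{\tilde\psi}a)_t=t^{mN}\cD^N\eta(t^m\cD)a=t^{mN}\eta(t^m\cD)(\cD^Nb\cdot\text{-type rearrangement})$, and one exploits the annihilation $a=\cD^Nb$ to gain $N$ extra powers acting on $b$, then splits the tent-space integral over the annular regions $2^kB\setminus2^{k-1}B$ and uses the off-diagonal estimates~\eqref{eq:PfcOD} together with the size bounds (1) and (2) in Definition~\ref{moleculedef}, summing in $k$ using $\sigma-N>\kappa/2m$ (which is available since $\beta>\kappa/2m$ and $N>\kappa/2m$, so one can take $\sigma$ large enough).

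The main obstacle is the uniform tent-space bound $\|\cQ^\cD_{\tilde\psi}a\|_{T^1}\lesssim1$ for molecules in the reverse inclusion. Although this is morally contained in~\cite[Theorem~6.2]{AMcR}, some care is needed because the $E^1_{\cD,\psi}(\cV)$ and $H^1_{\cD,\psi}(\cV)$ spaces here are defined via the completion-in-$L^1$ formalism of Definition~\ref{Def: relcompletion} rather than as an abstract completion, and one must ensure that the molecule genuinely lands in $E^1_{\cD,\psi}(\cV)$ (not merely in an abstract completion), which uses the hypothesis $H^1_{\cD,\psi}(\cV)\cap L^2(\cV)=E^1_{\cD,\psi}(\cV)$ together with $a\in L^2(\cV)$. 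The second, more bookkeeping-heavy, obstacle is the passage from a single molecule (resp.\ a single element of $E^1_{\cD,\psi}$) to the full space via absolutely convergent series: one must consistently use the uniform $L^1(\cV)$ bound~\eqref{eq: L1uniform.mol} to force the $L^1$- and $H^1_{\cD,\psi}$-limits to agree, appealing to the fact that both norms dominate $L^1_{\text{loc}}(\cV)$ convergence on compacta, exactly as in the proof of Lemma~\ref{Lem: EpinLp}.
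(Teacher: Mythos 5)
Your proposal is correct and follows essentially the same strategy as the paper: map $T^1$-atoms to molecules via a suitable $\cS^\cD_{\tilde\psi}$ (as in \cite[Lemma~6.7]{AMcR}) for the forward inclusion, and use the uniform $T^1$-bound on $\cQ^\cD_{\tilde{\tilde\psi}}$ applied to molecules (as in \cite[Lemma~6.8]{AMcR}) for the reverse inclusion, reconciling the $H^1_{\cD,\psi}$ and $L^1$ limits via the embedding $H^1_{\cD,\psi}\subseteq L^1$. The only cosmetic difference is in the forward inclusion, where the paper applies the extended operator $\cS^\cD_{\tilde\psi}\in\mathcal{L}(T^1,H^1_{\cD,\psi})$ from~\eqref{eq:HpEquivNorms} directly to write $u=\cS^\cD_{\tilde\psi}V$ with $V\in T^1$, whereas you re-derive the same conclusion from the $E^1$-level molecular decomposition plus a telescoping density argument.
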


\begin{proof}
Suppose that $N\in\N$. The proof that $H^1_{\cD,\psi}\subseteq H^1_{\cD,\text{mol}(N)}$ follows that of Lemma~\ref{Lem: EpinLp}, except we need to replace $L^2$ convergence with $H^1_{\cD,\psi}$ convergence. We use the construction in \cite[Lemma~6.7]{AMcR} to fix a nondegenerate ${\tilde\psi}$ in $\Psi_\beta(S^o_\theta)$ such that $\cS^\cD_{\tilde\psi}A$ is an $H^1_\cD$-molecule of type $N$ whenever $A$ is a $T^1$-atom. Suppose that $u\in H^1_{\cD,\psi}$ and use \eqref{eq:HpEquivNorms} to choose $V$ in $T^1$ such that $u=\cS^\cD_{\tilde\psi}V$ and $\|V\|_{T^1} \leq 2 \|u\|_{H^1_{\cD,\psi}}$. The atomic characterisation of $T^{1}$ in Theorem~\ref{maintentatomic} provides a sequence $(\lambda_j)_j$ in $\ell^1$ and a sequence $(A_j)_j$ of $T^{1}$-atoms such that $\sum_j\lambda_j A_j$ converges to $V$ in $T^{1}$ and $\|(\lambda_j)_j\|_{\ell^1} \lesssim \|V\|_{T^1}$. It follows that $\sum_j \lambda_j \cS^\cD_{\tilde\psi} A_j$ converges to $u$ in $H^1_{\cD,\psi}$ and in $L^1$ because $\cS^\cD_{\tilde\psi} \in \mathcal{L}(T^1,H^1_{\cD,\psi})$ by \eqref{eq:HpEquivNorms} and $H^1_{\cD,\psi}\subseteq L^1$. Now recall that ${\tilde\psi}$ has the property whereby each $\cS^\cD_{\tilde\psi}A_j$ is an $H^1_\cD$-molecule of type $N$, so then $u\in H^1_{\cD,\text{mol}(N)}$ and $\|u\|_{H^1_{\cD,\text{mol}(N)}} \leq \|(\lambda_j)_j\|_{\ell^1} \lesssim \|V\|_{T^1} \lesssim \|u\|_{H^1_{\cD,\psi}}$, hence $H^1_{\cD,\psi}\subseteq H^1_{\cD,\text{mol}(N)}$.

Now suppose that $N\in\N$, $N>\kappa/2m$ and $u\in H^1_{\cD,\text{mol}(N)}$. Then $u\in L^1$ and there is a sequence $(\lambda_j)_j$ in $\ell^1$ and a sequence $(a_j)_j$ of $H^1_\cD$-molecules of type $N$ such that $\sum_j\lambda_j a_j$ converges to $u$ in $L^1$ with $\|(\lambda_j)_j\|_{\ell^1} \leq 2 \|u\|_{H^1_{\cD,\text{mol}(N)}}$. The construction in \cite[Lemma~6.8]{AMcR} allows us to fix ${\tilde{\tilde\psi}}$ in $\Psi^{\beta}(S^o_\theta)$ such that $\cQ^\cD_{{\tilde{\tilde\psi}}}$ is uniformly bounded in $T^1$ on all $H^1_\cD$-molecules of type $N$ (this requires $N>\kappa/2m$), so by~\eqref{eq:HpEquivNorms} we have
\[
\left\|\sum_{j=1}^l \lambda_ja_j - \sum_{j=1}^k \lambda_ja_j\right\|_{H^1_{\cD,\psi}}
\lesssim \sum_{j=k+1}^l |\lambda_j| \|\cQ^\cD_{{\tilde{\tilde\psi}}}a_j\|_{T^1} 
\lesssim \sum_{j=k+1}^l |\lambda_j|
\]
whenever $l>k>0$. Therefore, there exists $v$ in $H^1_{\cD,\psi}$ such that $\sum_j \lambda_ja_j$ converges to $v$ in $H^1_{\cD,\psi}$, and hence in $L^1$ because $H^1_{\cD,\psi}\subseteq L^1$. It follows that $u=v\in H^1_{\cD,\psi}$ with
\[
\|u\|_{H^1_{\cD,\psi}} \lesssim \lim_{k\rightarrow\infty} \sum_{j=1}^k |\lambda_j| \|\cQ^\cD_{{\tilde{\tilde\psi}}}a_j\|_{T^1}
\lesssim \|(\lambda_j)_j\|_{\ell^1}\lesssim \|u\|_{H^1_{\cD,\text{mol}(N)}},
\]
so $H^1_{\cD,\text{mol}(N)} \subseteq H^1_{\cD,\psi}$ and the proof is complete.
\end{proof}

\begin{Rem}
The proof of Theorem~\ref{AMcRThm6.2} shows that the same result holds when the $L^1(\cV)$ convergence required in Definition~\ref{AMcRDef6.1} is replaced with $H^1_{\cD,\psi}(\cV)$ convergence.
\end{Rem}

\begin{Rem}\label{Rem: Emol}
If we define $E^1_{\cD,\text{mol}(N)}(\cV)$ to be the normed space obtained by replacing $L^1(\cV)$ convergence with $L^2(\cV)$ convergence in Definition~\ref{AMcRDef6.1}, then we can prove that $E^1_{\cD,\psi}(\cV) = E^1_{\cD,\text{mol}(N)}(\cV)$ without assuming  that the embedding $H^1_{\cD,\psi}(\cV) \subseteq L^1(\cV)$ holds. This was known previously (see~\cite[Theorem 3.5]{HMMc}). In particular, the proof of Lemma~\ref{Lem: EpinLp} shows that $E^1_{\cD,\psi}(\cV) \subseteq  E^1_{\cD,\text{mol}(N)}(\cV)$, whilst the reverse inclusion is proved in a manner similar to that of Theorem~\ref{AMcRThm6.2}. This means that we can identify any completion of $E^1_{\cD,\psi}(\cV)$ with any completion of $ E^1_{\cD,\text{mol}(N)}(\cV)$, but both are still abstract spaces and it is not known whether either can be embedded in $L^1(\cV)$, or in any function space, without the extra hypotheses on $\cD$ in Theorem~\ref{Thm: H1injectivity} (or Theorem~\ref{Thm: H1injectivityFPS}).
\end{Rem}

\subsection{The Embedding $H^p_{L}\subseteq L^p$ for Divergence Form Elliptic Operators}\label{section: ApplicationsI}
It is a simple matter to verify the hypotheses of Theorem~\ref{Thm: H1injectivity} for an operator that generates a semigroup satisfying pointwise kernel estimates. We demonstrate this by obtaining Theorem~\ref{Thm: HpinLpIntroL} as a special case of the more general result below.

Let $M=\R^n$ and consider the divergence form operator $L = -\Div A \nabla$ acting on $L^2(\R^n)$ and interpreted in the usual weak sense via a sesquilinear form, where $A \in L^\infty(\R^n,\cL(\C^n))$ is elliptic in the sense that there exists $\lambda>0$ such that 
\[
\re \langle A(x)\zeta,\zeta\rangle_{\C^n}\geq\lambda|\zeta|^2 \qquad \forall \zeta\in\C^n, \ \text{ a.e. } x\in\R^n
\]
There exists $\omega_L\in[0,\pi/2)$, depending on $\lambda$ and $\|A\|_\infty$, such that $L$ is $\omega_L$-sectorial (see, for instance,~\cite[Chapter 2]{AuscherMem2007}), hence $L:\Dom(L)\subseteq L^2(\R^n) \rightarrow L^2(\R^n)$ satisfies \eqref{H1}--\eqref{H2} with $\omega = \omega_L$. Note that $\Dom(L) = \{u\in W^{1,2}(\R^n) : A\nabla u\in \Dom(\nabla^*)\}$. It is also known that $L$ satisfies \eqref{H3} with $m=2$ (see \cite[Lemma~2.1]{AHLMT}).

In order to embed $H^p_{L,\psi}(\R^n)$ in $L^p(\R^n)$ when $1\leq q \leq p \leq 2$ and $1/q' + 1/q = 1$, we assume that there exists $g\in L^{2}_{\text{loc}}((0,\infty))$ such that the analytic semigroup $(e^{-tL^*})_{t>0}$ generated by the adjoint $-L^*$ on $L^2(\R^n)$ satisfies
\begin{equation}\label{GnewLq}
\|e^{-tL^*}u\|_{q'} \leq g(t) \|u\|_2 \qquad \forall u\in L^2(\R^n).
\end{equation}
This assumption is always satisfied when $2n / (n+2) \leq q \leq 2$ in dimension $n\geq3$  (see \cite[Proposition~3.2]{AuscherMem2007} and \cite[Lemma~2.25]{HMMc}). It remains an open question, however, as to whether the following theorem holds in the absence of estimates such as~\eqref{GnewLq}. 

\begin{Thm}\label{Thm: HpinLpLNEW}
Suppose that $A \in L^\infty(\R^n,\cL(\C^n))$ is elliptic and $L=-\Div A \nabla$ on $L^2(\R^n)$ satisfies \eqref{GnewLq} for some $q \in [1,2]$. If $q\leq p\leq 2$, $\theta\in(\omega_L,\pi/2)$, $\beta>n/4$ and $\psi\in\Psi_\beta(S^o_\theta)$ is nondegenerate, then the completion $H^p_{L,\psi}(\R^n)$ of $E^p_{L,\psi}(\R^n)$ in $L^p(\R^n)$ exists. Moreover, if $q=1$, $N\in\N$ and $N>n/4$, then $H^1_{L,\psi}(\R^n)=H^1_{L,\text{mol}(N)}(\R^n)$, and when $A$ is self-adjoint, then also $H^1_{L,\psi}(\R^n)=H^1_{L,\text{at}(N)}(\R^n)$.
\end{Thm}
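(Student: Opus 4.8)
The plan is to obtain all three assertions from the general results of this section, the only genuine work being the verification of hypothesis~\ref{SFinLq} for $L$. First I would recall, as noted before the statement, that $L$, and hence also $L^*$, satisfies \eqref{H1}--\eqref{H3} on $L^2(\R^n)$ with $\omega=\omega_L$ and $m=2$, and that $\R^n$ is doubling with $\kappa=n$; thus $\kappa/2m=n/4$, so the constraints $\beta>n/4$ and $N>n/4$ in the statement are exactly those required to invoke Theorems~\ref{Thm: H1injectivity} and~\ref{AMcRThm6.2}. Once \ref{SFinLq} is checked for the given $q$, Theorem~\ref{Thm: H1injectivity} provides the completion $H^p_{L,\psi}(\R^n)$ of $E^p_{L,\psi}(\R^n)$ in $L^p(\R^n)$ for every $p\in[q,2]$ and every nondegenerate $\psi\in\Psi_\beta(S^o_\theta)$, together with $H^p_{L,\psi}(\R^n)\cap L^2(\R^n)=E^p_{L,\psi}(\R^n)$.

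To verify \ref{SFinLq} it suffices, by Remark~\ref{Rem: Tc}, to produce a nondegenerate $\tilde\psi\in\Psi^\beta(S^o_\theta)$ with $\cS^{L^*}_{\tilde\psi^*}F\in L^{q'}(\R^n)$ for every compactly supported $F\in T^2(\R^n\times\R_+)$, and then invoke that remark. I would take $\tilde\psi(z)=ze^{-z}$ on the component $\{|\arg z|<\theta\}$ of $S^o_\theta$ and extend it by $z\mapsto ze^{z}$ on the other component; as $\theta<\pi/2$, this is a nondegenerate function in $\Psi_1^\tau(S^o_\theta)$ for every $\tau>0$, hence in $\Psi^\beta(S^o_\theta)$, and $\tilde\psi^*=\tilde\psi$. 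Since $\sigma(L^*)$ misses the left component, the extension is immaterial and $\tilde\psi^*(s^2L^*)=s^2L^*e^{-s^2L^*}$ for all $s>0$, where $(e^{-tL^*})_{t>0}$ is the semigroup of $-L^*$. Using $e^{-s^2L^*}=e^{-(s^2/2)L^*}e^{-(s^2/2)L^*}$, write $\tilde\psi^*(s^2L^*)=e^{-(s^2/2)L^*}R_s$ with $R_s:=s^2L^*e^{-(s^2/2)L^*}=\varphi((s^2/2)L^*)$, where $\varphi(z):=2ze^{-z}$; since $\|\varphi_{s^2/2}\|_\infty=\|\varphi\|_\infty$, \eqref{H2} gives $\sup_{s>0}\|R_s\|\le c_\theta\|\varphi\|_\infty=:C_\theta<\infty$. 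If $F$ is supported in $\R^n\times[a,b]$ with $0<a<b<\infty$, then applying \eqref{GnewLq} to the left factor, the bound $\|R_s\|\le C_\theta$ to the right factor, and then the Cauchy--Schwarz inequality with respect to $ds/s$ together with the identity $\int_0^\infty\|F_s\|_2^2\,ds/s=\|F\|_{T^2}^2$ (Fubini, using $\mu(B(y,t))=V(y,t)$) yields
\[\|\cS^{L^*}_{\tilde\psi^*}F\|_{q'}\le C_\theta\int_a^b g(s^2/2)\,\|F_s\|_2\,\frac{ds}{s}\le C_\theta\Big(\int_{a^2/2}^{b^2/2}\frac{g(t)^2}{2t}\,dt\Big)^{1/2}\|F\|_{T^2},\]
and the last integral is finite because $g\in L^2_{\text{loc}}((0,\infty))$ and $[a^2/2,b^2/2]$ is a compact subinterval of $(0,\infty)$. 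Hence $\cS^{L^*}_{\tilde\psi^*}F\in L^{q'}(\R^n)$, and Remark~\ref{Rem: Tc} gives \ref{SFinLq} for this $q$ and every $p\in[q,2]$.

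It remains to collect the conclusions. The existence statement and the identity $H^p_{L,\psi}(\R^n)\cap L^2(\R^n)=E^p_{L,\psi}(\R^n)$ are now immediate from Theorem~\ref{Thm: H1injectivity}. When $q=1$, Theorem~\ref{AMcRThm6.2} (with $N>n/4$) gives $H^1_{L,\psi}(\R^n)=H^1_{L,\text{mol}(N)}(\R^n)$. Finally, suppose $A$ is self-adjoint, so that $L=-\Div A\nabla$ is a nonnegative self-adjoint operator on $L^2(\R^n)$ which satisfies the Davies--Gaffney estimates; since then $L^*=L$, hypothesis \eqref{GnewLq} with $q=1$ is precisely \eqref{GnewSA}, so Theorem~\ref{Thm: HpinLPforHLMMY}, applied with the trivial line bundle over $\R^n$ (so $\kappa=n$, $\beta>\kappa/4$, $N>\kappa/4$), directly yields $H^1_{L,\psi}(\R^n)=H^1_{L,\text{at}(N)}(\R^n)$ for $N>n/4$.

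The one substantive point is the verification of \ref{SFinLq}: the idea is to choose $\tilde\psi$ so that $\cS^{L^*}_{\tilde\psi^*}$ is expressed through the semigroup of $L^*$, and to split off a single semigroup factor that can absorb the $L^2\to L^{q'}$ estimate \eqref{GnewLq} while the complementary factor stays bounded on $L^2$ via the $H^\infty$ calculus; everything else is routine bookkeeping with Cauchy--Schwarz, the $T^2$-norm identity, and the local square-integrability of $g$. A minor point is to confirm that Remark~\ref{Rem: Tc} applies, i.e.\ that compactly supported sections of $T^2(\R^n\times\R_+)$ lie in $T^{q'}(\R^n\times\R_+)$, including the endpoint case $q'=\infty$; this is elementary on $\R^n$.
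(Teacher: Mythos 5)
Your proof is correct and follows essentially the same route as the paper's: you verify \ref{SFinLq} via Remark~\ref{Rem: Tc} by splitting $\tilde\psi^*(s^2L^*)$ into a semigroup factor (which absorbs the $L^2\to L^{q'}$ estimate~\eqref{GnewLq}) and a uniformly $L^2$-bounded factor, then invoke Theorems~\ref{Thm: H1injectivity}, \ref{AMcRThm6.2} and \ref{Thm: HpinLPforHLMMY}. The only cosmetic differences are that you define $\tilde\psi$ separately on the two components of the bisector $S^o_\theta$ (making the claim $\tilde\psi\in\Psi^\beta(S^o_\theta)$ precise, whereas the paper simply writes $\tilde\psi(z)=ze^{-z}$, which strictly speaking is unbounded on the left component but harmless since $\sigma(L^*)$ lies in the right component), and that you bound $\|R_s\|$ uniformly via the $H^\infty$ calculus~\eqref{H2} rather than appealing directly to the analyticity of the semigroup—two equivalent justifications for the same estimate.
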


\begin{proof}
We will use \eqref{GnewLq} to show that~\ref{SFinLq} holds with $\kappa = n$. The hypotheses of Theorem~\ref{Thm: H1injectivity} will then be satisfied, since it was noted above that $L$ satisfies \eqref{H1}--\eqref{H3} with $\omega = \omega_L$ and $m=2$. To this end, choose $\theta\in(\omega_L,\pi/2)$, define the nondegenerate function ${\tilde\psi}(z)=ze^{-z}$ on $S^o_\theta \cup \{0\}$ and note that $\tilde\psi \in \Psi^{\beta}(S^o_\theta)$ for any $\beta>n/4$. Let $T^2_c(\R^{n+1}_+)$ denote the space of compactly supported functions in $T^2(\R^{n+1}_+)$. For each $F\in T^2_c(\R^{n+1}_+)$, there is a ball $B \subseteq M$ and $r>1$ such that $\sppt (F) \subseteq B \times [1/r,r]$, and so we have
\begin{align}\begin{split}\label{H4est}
\|\cS^{L^*}_{\tilde\psi^*}F\|_{q'}
&= \left\| \int_{1/r}^r t^2L^*e^{-t^2L^*}F_t \,\frac{dt}{t}\right\|_{q'} \\
&\leq \int_{1/r}^r \|e^{-(t^2/2)L^*}t^2L^*e^{-(t^2/2)L^*}F_t\|_{q'} \,\frac{dt}{t} \\
&\leq \int_{1/r}^r g(t^2/2)\|t^2L^*e^{-(t^2/2)L^*}F_t\|_2 \,\frac{dt}{t} \\
&\lesssim \left(\int_{1/r}^r (g(t^2/2))^2 \,\frac{dt}{t} \right)^{1/2} \left(\int_{0}^\infty \|F_t\|_2^2 \,\frac{dt}{t} \right)^{1/2}\\
&\lesssim \|F\|_{T^2},
\end{split}\end{align}
where the third line uses~\eqref{GnewLq}, and the fourth line uses the analyticity of the semigroup $(e^{-tL^*})_{t>0}$ (see, for instance, \cite[Theorem~II.4.6]{EngelNagel2000}) followed by the Cauchy--Schwarz inequality. This shows that $\cS^{L^*}_{\tilde\psi^*} (T^{2}_c(\R^{n+1}_+)) \subseteq L^{q'}(\R^n)$, so  Remark~\ref{Rem: Tc} implies that~\ref{SFinLq} holds with $\kappa=n$, as required.

We have now shown that the hypotheses of Theorem~\ref{Thm: H1injectivity} hold. Moreover, when $q=1$, the hypotheses of Theorem~\ref{AMcRThm6.2} follow. The conclusions of those two theorems complete the proof, except for the atomic characterisation in the case when $q=1$ and $A$ is self-adjoint, but then $L=-\Div A \nabla$ satisfies the requirements of Theorem~\ref{Thm: HpinLPforHLMMY} (see \cite[Proposition~3.2]{AuscherMem2007} for a proof of the Davies--Gaffney estimates~\eqref{DG}), so we refer the reader to the proof of that theorem in Section~\ref{section: HLMMY}.
\end{proof}

Theorem~\ref{Thm: HpinLpIntroL} is a special case of the above result.

\begin{proof}[Proof of Theorem~\ref{Thm: HpinLpIntroL}]
This is a special case of Theorem~\ref{Thm: HpinLpLNEW}, since property \eqref{Gnew} corresponds to property \eqref{GnewLq} with $q=1$.
\end{proof}

\section{Self-Adjoint Operators with Finite Propagation Speed}\label{section: FPS}
We now restrict the theory of the previous section to the context of any self-adjoint operator $D: \Dom(D) \subseteq L^2(\cV) \rightarrow L^2(\cV)$ for which the associated unitary $C_0$-group $(e^{itD})_{t\in\R}$ has finite propagation speed. The existence of this group is guaranteed by Stone's Theorem because $D$ is self-adjoint. The defining features of such a group are that the mapping $t\mapsto e^{itD}$ is strongly continuous from $\R$ to $\mathcal{L}(L^2(\cV))$ with $e^{i(s+t)D}=e^{isD}e^{itD}$, $e^{itD}|_{t=0}=I$ and $\frac{d}{dt}(e^{itD}u)|_{t=0}=iDu$ for all $u\in \Dom(D)=\{u\in L^2(\cV) : \frac{d}{dt}(e^{itD}u)|_{t=0} \text{ exists in } L^2(\cV)\}$. An introduction to the theory of such groups can be found in~\cite{Kato,EngelNagel2000}. The group $(e^{itD})_{t\in\R}$ is said to have \textit{finite propagation speed} when there exists a finite constant $c_D>0$ such that for all $u\in L^2(\mathcal{V})$ satisfying $\sppt(u)\subseteq F\subseteq M$ and all $t\in \R$, it holds that $\sppt(e^{itD}u)\subseteq \{x\in M : \rho(\{x\},F)\leq c_D |t|\}$.  We begin by establishing that these assumptions allow us to apply the theory from the previous section with $\cD=D$, $\omega=0$ and $m=1$.

\begin{Lem}\label{Lem: ODestRes}
If $D$ is a self-adjoint operator on $L^2(\cV)$ and the group $(e^{itD})_{t\in\R}$ has finite propagation speed $c_D>0$, then $D$ satisfies \eqref{H1}--\eqref{H3} with $\omega=0$ and $m=1$. 
\end{Lem}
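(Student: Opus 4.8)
**The plan is to verify each hypothesis in turn, using self-adjointness for \eqref{H1} and \eqref{H2}, and deriving \eqref{H3} from finite propagation speed via a Fourier/group representation of the resolvent.**

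For \eqref{H1} and \eqref{H2}, I would simply invoke the spectral theorem: a self-adjoint operator $D$ has real spectrum, so $\sigma(D) \subseteq S_0$, and the standard resolvent bound $\|(zI-D)^{-1}\| \leq 1/\operatorname{dist}(z,\R) \leq C_\theta/|z|$ for $z \notin S_\theta$ gives that $D$ is type $S_0$. Likewise, the Borel functional calculus is contractive, $\|\psi(D)\| \leq \|\psi\|_\infty$, which is stronger than the bounded $H^\infty(S^o_\theta\cup\{0\})$ functional calculus required in \eqref{H2} with $\omega=0$ and $c_\theta=1$. (The excerpt already notes these facts after stating \eqref{H3}, so this step is essentially a citation.)

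The substantive step is \eqref{H3} with $m=1$. The plan is to represent the resolvent as a Laplace/Fourier transform of the group: for $z$ with $\operatorname{Im} z > 0$ one has $(zI - D)^{-1} = i\int_0^\infty e^{-izt}e^{itD}\,dt$ (and the analogous formula with the opposite sign of $t$ when $\operatorname{Im} z < 0$); more generally, for $z \in \C\setminus S_\theta$ with $|\operatorname{Im} z| \gtrsim_\theta |\operatorname{Re} z|$, so that $|\operatorname{Im} z| \gtrsim_\theta |z|$, the integral converges absolutely in $\mathcal{L}(L^2(\cV))$ with $\|(zI-D)^{-1}\| \leq 1/|\operatorname{Im} z|$. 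Now fix measurable sets $E, F \subseteq M$ and $u \in L^2(\cV)$; applying $\ca_F$ first and then using that $\sppt(e^{itD}\ca_F u) \subseteq \{x : \rho(x,F) \leq c_D|t|\}$, which is disjoint from $E$ whenever $c_D|t| < \rho(E,F)$, I get
\[
\ca_E (zI-D)^{-1}\ca_F u = i\int_{\rho(E,F)/c_D}^{\infty} e^{-izt}\,\ca_E e^{itD}\ca_F u\,dt
\]
(for the upper half-plane; similarly below), so that
\[
\|\ca_E (zI-D)^{-1}\ca_F u\|_2 \leq \int_{\rho(E,F)/c_D}^\infty e^{-|\operatorname{Im} z|t}\,dt\,\|u\|_2 = \frac{e^{-|\operatorname{Im} z|\rho(E,F)/c_D}}{|\operatorname{Im} z|}\|u\|_2 \lesssim_\theta \frac{e^{-c|z|\rho(E,F)}}{|z|}\|u\|_2
\]
for a constant $c = c(\theta,c_D) > 0$. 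Since $e^{-s} \lesssim_N \langle 1/s\rangle^N$ for $s > 0$ and any $N$, this yields the bound in \eqref{H3} with $m=1$, at least on the region $|\operatorname{Im} z| \gtrsim_\theta |z|$. To cover the full set $\C \setminus S_\theta$ (which includes $z$ near the imaginary axis but also $z$ with $|\operatorname{Re} z|$ comparable to $|z|$ as long as $|\arg z|, |\pi - \arg z| > \theta$), I would either rotate the contour in the group integral (replacing $e^{itD}$ by $e^{i\zeta D}$ along a ray, using that finite propagation speed plus self-adjointness controls $e^{i\zeta D}$ for $\zeta$ in a sector — but this is delicate), or more simply observe that for $z \in \C\setminus S_\theta$ we still have $|\operatorname{Im} z| \geq |z|\sin\theta$, so in fact the estimate above already holds uniformly on all of $\C \setminus S_\theta$ with $c = \sin\theta/c_D$; no contour rotation is needed.

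**I expect the main obstacle to be the rigorous justification of the resolvent-as-group-integral identity and the interchange of $\ca_E$ with the (Bochner) integral**, together with confirming that the support-propagation statement, which is phrased for $e^{itD}$ acting on $L^2$ with $\sppt(u) \subseteq F$, can be applied after multiplication by $\ca_F$ (i.e., to $\ca_F u$, whose support lies in $\overline F$ or at least in a set at distance $0$ from $F$). These are routine but require care: one reduces to compactly supported $u$ and a sequence of truncations, uses strong continuity and the uniform bound $\|e^{itD}\| = 1$ to pass limits through, and notes that $\rho(E, \overline F) = \rho(E,F)$ so the support argument goes through unchanged. Once the identity is established, the decay estimate and hence \eqref{H3} follow immediately, and $\omega = 0$, $m=1$ are exactly as claimed.
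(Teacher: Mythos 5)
Your proof is correct and takes essentially the same route as the paper's: (H1) and (H2) from self-adjointness via the spectral theorem, and (H3) via the Laplace representation of the resolvent as an integral of the group against an exponential, truncated using finite propagation speed, with the resulting exponential decay converted to the polynomial bound. One small slip: the identity $(zI-D)^{-1}=i\int_0^\infty e^{-izt}e^{itD}\,dt$ is valid in the \emph{lower} half-plane $\im z<0$ (so that $|e^{-izt}|=e^{t\im z}$ actually decays); for $\im z>0$ the correct formula is $(zI-D)^{-1}=-i\int_0^\infty e^{izt}e^{-itD}\,dt$. This is only a mislabelling of the two cases—your subsequent estimate uses $e^{-|\im z|t}$ and $|\im z|\geq|z|\sin\theta$ correctly, which is precisely the paper's $C_\theta=1/\sin\theta$—so the argument is sound once the sign is fixed.
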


\begin{proof}
Since $D$ is self-adjoint, it satisfies \eqref{H1} and \eqref{H2} with $\omega=0$, $C_\theta=1/\sin\theta$ and $c_\theta=1$. It remains to prove \eqref{H3}. Let $E$ and $F$ denote measurable subsets of $M$. The finite propagation speed implies that $\ca_E e^{itD} \ca_F=0$ whenever $\rho(E,F)> c_D |t|$. For all $z\in\C$ with $\im (\pm z) > 0$, we use the integral representation of the resolvent
$
(zI-D)^{-1}=\mp i\int_0^\infty e^{\pm izt}e^{\mp itD}\, dt
$
to obtain
\begin{align*}
\|\ca_E (zI-D)^{-1} \ca_F\| 
\leq \int_{\rho(E,F)/{c_D}}^\infty |e^{\pm izt}| \|\ca_E  e^{\mp itD } \ca_F\|\, d t
\leq \int_{{\rho(E,F)}/{{c_D}}}^\infty e^{-(\im(\pm z))t} \, dt.
\end{align*}
For each $\theta\in(0,\pi/2)$, it follows that
\[
\|\ca_E (zI-D)^{-1} \ca_F\| 
\leq \frac{C_{\theta}}{|z|} \exp\left(-\frac{\rho(E,F)|z|}{{c_D} C_{\theta}}\right) \qquad \forall z\in \C\setminus S_{\theta},
\]
which implies \eqref{H3} with $m=1$.
\end{proof}

The algebra of complex-valued bounded Borel measurable functions on $\R$ is denoted by $B^\infty(\R)$. The Spectral Theorem for self-adjoint operators provides $D$ with a bounded $B^\infty(\R)$ functional calculus such that $\|f(D)\| \leq \|f\|_\infty$ for all $f\in B^\infty(\R)$. This coincides with the holomorphic functional calculus defined by~\eqref{eq:Pfc} and~\eqref{eq:Hfc} when $f \in H^\infty(S^o_\theta\cup\{0\})$ because the holomorphic functional calculus is unique with respect to \eqref{IdH}--\eqref{cvH}. In particular, it is well known (see~\cite[Chapter~XX, \S1]{Lang1993}) that the Borel functional calculus is an algebra homomorphism from $B^\infty(\R)$ into $\mathcal{L}(L^2(\cV))$ that satisfies~\eqref{IdH} and \eqref{ResH}, with $\R$ in place of $S^o_\theta\cup\{0\}$, as well as the following convergence lemma, which is related to~\eqref{cvH}:
\begin{align}
\label{cvB}\textrm{\begin{minipage}[c]{0.9\textwidth}if $(f_n)_n$ is a sequence in $B^\infty(\R)$ that converges pointwise to a function $f$ in $B^\infty(\R)$, and $\textstyle \sup_n \|f_n\|_{\infty} < \infty$, then $\textstyle \lim_n f_n(D)u = f(D)u$ for all $u\in L^2(\cV)$.\end{minipage}}
\end{align}
The  orthogonal decomposition $L^2(\cV) = \overline{\Ran(D)} {\overset\perp\oplus} N(D)$ and the properties of the Borel functional calculus allow us to prove the following Calder\'{o}n reproducing formula.

\begin{Prop}\label{Prop: Calderon}
Suppose that $D$ is self-adjoint on $L^2(\cV)$. If $f$ and $g$ in $B^\infty(\R)$ satisfy $f(0)g(0)=0$, $\int_0^\infty |f(\pm t)g(\pm t)|\,\frac{d t}{t} <\infty$ and $\int_0^\infty f(\pm t) g(\pm t)\,\frac{d t}{t}=1$, then
\begin{equation}\label{eqCglo}
\int_0^\infty f_t(D)g_t(D)u\,\frac{d t}{t} = \mathsf{P}_{\overline{\Ran(D)}}\,u \qquad \forall u\in L^2(\cV),
\end{equation}
where $\mathsf{P}_{\overline{\Ran(D)}}$ denotes the projection from $L^2(\cV)$ onto $\overline{\Ran(D)}$.
\end{Prop}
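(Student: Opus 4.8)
The plan is to use the Borel functional calculus together with the orthogonal decomposition $L^2(\cV)=\overline{\Ran(D)}\overset\perp\oplus N(D)$. First I would observe that since $f(0)g(0)=0$, the function $s\mapsto f(ts)g(ts)$ vanishes at $s=0$ for every $t>0$, so by~\eqref{ResH} (applied as in the construction of the Borel calculus) and the fact that $\mathsf{P}_{N(D)}$ is the spectral projection of $D$ at $\{0\}$, we have $f_t(D)g_t(D)\,\mathsf{P}_{N(D)}=0$ for all $t>0$. Hence it suffices to prove that $\int_0^\infty f_t(D)g_t(D)u\,\frac{dt}{t}=u$ for all $u\in\overline{\Ran(D)}$, and that the integral on the left converges (as a Bochner integral, or at least as an improper Riemann integral of a continuous $L^2(\cV)$-valued function) for all $u\in L^2(\cV)$.

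Next I would fix $u\in L^2(\cV)$ and, for $0<\epsilon<R<\infty$, consider the truncated operator $T_{\epsilon,R}:=\int_\epsilon^R f_t(D)g_t(D)\,\frac{dt}{t}$. Since $t\mapsto f_t(D)g_t(D)u$ is strongly continuous (by~\eqref{cvB}, as $t_n\to t$ gives pointwise convergence of the bounded symbols $f_{t_n}g_{t_n}\to f_tg_t$ together with the uniform bound $\|f_tg_t\|_\infty\le\|f\|_\infty\|g\|_\infty$) and $\|f_t(D)g_t(D)u\|_2\le\|f\|_\infty\|g\|_\infty\|u\|_2$, the truncated integral is a well-defined bounded operator. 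The key point is that $T_{\epsilon,R}=h_{\epsilon,R}(D)$, where $h_{\epsilon,R}(s):=\int_\epsilon^R f(ts)g(ts)\,\frac{dt}{t}$: this identity follows by approximating the integral over $[\epsilon,R]$ by Riemann sums, using that each Riemann sum gives a bounded Borel function applied to $D$ (the calculus being an algebra homomorphism), and passing to the limit by means of~\eqref{cvB} once one checks that the partial sums are uniformly bounded in $\sup$-norm and converge pointwise in $s$ to $h_{\epsilon,R}(s)$. The hypothesis $\int_0^\infty|f(\pm t)g(\pm t)|\,\frac{dt}{t}<\infty$ gives the uniform bound $\|h_{\epsilon,R}\|_\infty\le\int_0^\infty|f(\pm t)g(\pm t)|\,\frac{dt}{t}<\infty$, and the change of variable $t\mapsto t/|s|$ (splitting into $s>0$ and $s<0$) shows that $h_{\epsilon,R}(s)\to\int_0^\infty f(\pm t)g(\pm t)\,\frac{dt}{t}=1$ pointwise as $\epsilon\to0$, $R\to\infty$, for every $s\ne0$, while $h_{\epsilon,R}(0)=0$.

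Finally I would invoke~\eqref{cvB} one more time: since $h_{\epsilon,R}\to\mathbf{1}_{\R\setminus\{0\}}$ pointwise with uniform bound, $h_{\epsilon,R}(D)u\to\mathbf{1}_{\R\setminus\{0\}}(D)u=\mathsf{P}_{\overline{\Ran(D)}}\,u$ in $L^2(\cV)$, which is exactly~\eqref{eqCglo} with the integral understood as this limit. I expect the main obstacle to be the rigorous justification of the operator identity $\int_\epsilon^R f_t(D)g_t(D)\,\frac{dt}{t}=h_{\epsilon,R}(D)$, i.e. the interchange of the Borel functional calculus with the $t$-integral; this is where one must be careful to produce a genuinely uniformly bounded, pointwise-convergent sequence of Borel symbols from Riemann sums and to control the tails using the absolute integrability hypothesis, rather than appealing to a more powerful (and here unavailable) measurable functional calculus theorem. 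Everything else — the vanishing on $N(D)$ and the final passage to the limit — is a direct application of the stated properties~\eqref{IdH}, \eqref{ResH} and~\eqref{cvB} of the Borel calculus.
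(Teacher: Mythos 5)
Your overall architecture is the same as the paper's: truncate the integral, express the truncation as $h_{\epsilon,R}(D)$ where $h_{\epsilon,R}(s)=\int_\epsilon^R f(ts)g(ts)\,\frac{dt}{t}$, observe that $h_{\epsilon,R}\to\mathbf{1}_{\R\setminus\{0\}}$ pointwise with a uniform bound given by the absolute-integrability hypothesis, and pass to the limit via~\eqref{cvB}, using that $\mathbf{1}_{\R\setminus\{0\}}(D)=\mathsf{P}_{\overline{\Ran(D)}}$. (The paper uses the dyadic truncation $h_n=h_{1/n,n}$; that is cosmetic.) You also correctly identify the interchange $\int_\epsilon^R f_t(D)g_t(D)\,\frac{dt}{t}=h_{\epsilon,R}(D)$ as the one step that needs justification, a point the paper passes over silently.

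However, the justification you offer for that interchange does not work at the stated level of generality. Both your claim that $t\mapsto f_t(D)g_t(D)u$ is strongly continuous and your Riemann-sum approximation rely on the pointwise convergence $f(t_ns)g(t_ns)\to f(ts)g(ts)$ as $t_n\to t$ for every fixed $s$; that requires $f$ and $g$ to be continuous, whereas the proposition only assumes $f,g\in B^\infty(\R)$, i.e.\ bounded Borel. For a general bounded Borel function the map $t\mapsto f(ts)g(ts)$ need not be Riemann integrable on $[\epsilon,R]$, so the Riemann sums need not converge pointwise in $s$, and~\eqref{cvB} cannot be invoked. The correct mechanism is to work with the spectral measure. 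For $u,v\in L^2(\cV)$ let $\mu_{u,v}(A)=\langle\ca_A(D)u,v\rangle$, a finite complex Borel measure on $\R$. Then $\langle f_t(D)g_t(D)u,v\rangle=\int_\R f(ts)g(ts)\,d\mu_{u,v}(s)$, the integrand on $[\epsilon,R]\times\R$ is bounded and jointly Borel, and $\tfrac{dt}{t}\restriction_{[\epsilon,R]}$ is a finite measure, so Fubini gives
\begin{equation*}
\int_\epsilon^R\langle f_t(D)g_t(D)u,v\rangle\,\frac{dt}{t}
=\int_\R h_{\epsilon,R}(s)\,d\mu_{u,v}(s)
=\langle h_{\epsilon,R}(D)u,v\rangle ,
\end{equation*}
which identifies the (weakly defined) truncated integral with $h_{\epsilon,R}(D)u$ without any continuity assumption on $f$ or $g$. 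After that substitution, the rest of your argument — uniform bound, pointwise limit, application of~\eqref{cvB}, identification of $\mathbf{1}_{\R\setminus\{0\}}(D)$ with $\mathsf{P}_{\overline{\Ran(D)}}$ — goes through exactly as you and the paper have it, and your preliminary remark that $f_t(D)g_t(D)\mathsf{P}_{\Nul(D)}=0$ is correct but is in fact already absorbed into the final step.
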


\begin{proof}
Suppose that $f$ and $g$ in $B^\infty(\R)$ satisfy the hypotheses of the proposition. For each $n\in\N$, we have
\[
h_n(x):= \int_{\tfrac{1}{n}}^n f_t(x)g_t(x)\,\frac{d t}{t}
=\begin{cases}
\int_{x/n}^{xn} f( t)g( t)\,\frac{d t}{t}, & \text{if } x>0; \\
0, & \text{if }x=0; \\
\int_{|x|/n}^{|x|n} f(-t)g(-t)\,\frac{d t}{t}, & \text{if } x<0.
\end{cases}
\]
The sequence $(h_n)_n$ converges pointwise on $\R$ to the characteristic function $\mathbf{1}_{\R\setminus\{0\}}$, and $\sup_n \|h_n\|_\infty \leq \int_0^\infty |f(\pm t)g(\pm t)|\,\frac{d t}{t} < \infty$, so it follows from~\eqref{cvB} that
\[
\int_0^\infty f_{t}(D)g_{t}(D)u\,\frac{d t}{t} = \lim_{n\rightarrow\infty} h_n(D)u = \mathbf{1}_{\R\setminus\{0\}} (D)u = \mathsf{P}_{\overline{\Ran(D)}}\, u \qquad \forall u\in L^2(\cV),
\]
where the final equality relies on the fact that $\mathbf{1}_{\R}(D) = I$ and $\mathbf{1}_{\{0\}}(D) = \mathsf{P}_{\Nul(D)}$.
\end{proof}

We now require a class of functions that interact well with finite propagation speed. To this end, a function on $\R$ is called \textit{nondegenerate} when it is not identically zero on $(0,\infty)$ nor on $(-\infty,0)$. The Fourier transform of any Schwartz function $f\in\cS(\R)$ is denoted by $\widehat{f}$. For  $\delta>0$ and $N\in\N$, define
\begin{align*}
\widetilde{\Theta}^\delta(\R) &= \{\varphi \in \cS(\R) : \sppt \widehat{\varphi} \subseteq [-\delta,\delta]\},\\
\widetilde{\Psi}_N^\delta(\R) &= \{\eta \in \widetilde{\Theta}^\delta(\R) : \partial^{k-1}\eta(0)=0 \text{ for all } k\in\{1,\ldots, N\}\},
\end{align*}
$\widetilde{\Theta}(\R)=\bigcup_{\delta>0}\widetilde{\Theta}^\delta(\R)$, $\widetilde{\Psi}_N(\R) = \bigcup_{\delta>0} \widetilde{\Psi}_N^\delta(\R)$ and $\widetilde{\Psi}(\R) = \widetilde{\Psi}_1(\R)$. For $\varphi\in\widetilde\Theta(\R)$, the Fourier inversion formula and the $B^\infty(\R)$ functional calculus imply that
\begin{equation}\label{FT}
\varphi (D )u = \frac{1}{2\pi} \int_{\R} \widehat{\varphi}(t) e^{itD}u\, d t \qquad \forall u\in L^2(\cV).
\end{equation}
For $\eta\in\widetilde{\Psi}(\R)$, using the $B^\infty(\R)$ functional calculus, define $\cQ _\eta^D$ in $\mathcal{L}(L^2,T^2)$  by $(\cQ _\eta^Du)_t = \eta(tD)u$, and $\cS_\eta^D$ in $\mathcal{L}(T^2,L^2)$  by $\cS_\eta^DU=\int^\infty_0\eta(sD) U_s\frac{ds}{s}$, as well as the space $E^p_{D,\eta}(\cV) = \cS_{\eta}^\cD(T^p\cap T^{2})$.  This extends Definitions~\ref{Def: QandS} and~\ref{AMcRDef5.1}, which use the $H^\infty(S^o_\theta\cup\{0\})$ functional calculus. Also, note that
\begin{equation}\label{eq:BPfcRan}
\eta(D)u = \mathsf{P}_{\overline{\Ran(D)}}\, \eta(D)\, \mathsf{P}_{\overline{\Ran(D)}}\, u \qquad \forall u\in L^2(\cV),
\end{equation}
since $\eta(0)=0$ and $\mathbf{1}_{\{0\}}(D) = \mathsf{P}_{\Nul(D)}$.

The following corollary of Proposition~\ref{Prop: Calderon} extends the Calder\'{o}n reproducing formula in Proposition~\ref{Prop: AMcR_Remark2.1} and allows us to incorporate $\widetilde\Psi(\R)$ class functions into the theory of Section~\ref{section: Pointwise}.

\begin{Cor}\label{Cor: Repr}
Suppose that $D$ is a self-adjoint operator on $L^2(\cV)$. If $\sigma,\tau>0$, $\theta\in(0,\pi/2)$ and $\eta\in\widetilde\Psi(\R)$ is nondegenerate, then there exists a nondegenerate $\psi\in\Psi_\sigma^\tau(S_\theta^o)$ such that $\cS_\psi^D\cQ_{\eta}^D u = \cS_{\eta}^D\cQ_\psi^D u = \mathsf{P}_{\overline{\Ran(D)}}\,u$ for all $u\in L^2(\cV)$.
\end{Cor}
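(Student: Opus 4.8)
The plan is to obtain the identity directly from Proposition~\ref{Prop: Calderon}, once a suitable $\psi$ has been constructed. By Lemma~\ref{Lem: ODestRes}, $D$ satisfies \eqref{H1}--\eqref{H3} with $m=1$, so by Definition~\ref{Def: QandS} and the definition of $\cQ_\eta^D$ we have $\cS_\psi^D\cQ_\eta^D u=\int_0^\infty\psi(sD)\eta(sD)u\,\tfrac{ds}{s}$ and $\cS_\eta^D\cQ_\psi^D u=\int_0^\infty\eta(sD)\psi(sD)u\,\tfrac{ds}{s}$; since $\psi(sD)$ and $\eta(sD)$ are Borel functions of the single self-adjoint operator $D$ they commute, so the two expressions coincide, and it remains only to show their common value is $\mathsf{P}_{\overline{\Ran(D)}}u$. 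For this I will apply Proposition~\ref{Prop: Calderon} with $g=\eta$ and $f=\psi|_\R$ (setting $\psi(0):=0$), which is legitimate because the holomorphic functional calculus used in Definition~\ref{Def: QandS} agrees with the Borel functional calculus on $\psi|_\R$, as noted after Lemma~\ref{Lem: ODestRes}. The hypotheses $f(0)g(0)=0$ and $\int_0^\infty|\psi(\pm t)\eta(\pm t)|\,\tfrac{dt}{t}<\infty$ of Proposition~\ref{Prop: Calderon} will be automatic, the first because $\eta(0)=0$, the second because $|\psi(z)|\lesssim\min\{|z|^{\sigma},|z|^{-\tau}\}$ while $\eta$ is Schwartz with $\eta(0)=0$, so the integrand is $O(t^{\sigma})$ near $0$ and rapidly decaying at infinity. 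Thus the real task is to produce a nondegenerate $\psi\in\Psi_\sigma^\tau(S_\theta^o)$ with $\int_0^\infty\psi(t)\eta(t)\,\tfrac{dt}{t}=1$ and $\int_0^\infty\psi(-t)\eta(-t)\,\tfrac{dt}{t}=1$.

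I construct $\psi$ separately on the two components of the bisector, recalling that a holomorphic function on $S_\theta^o$ is merely a pair of holomorphic functions on the sectors $\{|\arg z|<\theta\}$ and $\{|\pi-\arg z|<\theta\}$. Fix an integer $N\geq\sigma$. For the right-hand sector, consider $G(\lambda)=\int_0^\infty t^{N-1}e^{-\lambda t}\eta(t)\,dt$, which is holomorphic on $\{\re\lambda>0\}$ because $\eta\in\cS(\R)$; if $G$ vanished on all of $(0,\infty)$ it would vanish identically by the identity theorem, whence injectivity of the Laplace transform would force $\eta\equiv0$ on $(0,\infty)$, contradicting the nondegeneracy of $\eta$. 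Hence I may fix $\lambda_+>0$ with $G(\lambda_+)\neq0$ and set $\psi(z)=G(\lambda_+)^{-1}z^N e^{-\lambda_+ z}$ on the right-hand sector. There $|\psi(z)|\lesssim|z|^N e^{-\lambda_+\re z}\leq|z|^N e^{-\lambda_+(\cos\theta)|z|}$, which is $\lesssim|z|^{\sigma}$ for $|z|\leq 1$ (since $N\geq\sigma$) and $\lesssim|z|^{-\tau}$ for $|z|\geq 1$ (exponential decay beats any polynomial), so the $\Psi_\sigma^\tau$ bound holds on this sector; moreover $\int_0^\infty\psi(t)\eta(t)\,\tfrac{dt}{t}=G(\lambda_+)^{-1}G(\lambda_+)=1$. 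The left-hand sector is treated identically, using $G^-(\lambda)=\int_0^\infty t^{N-1}e^{-\lambda t}\eta(-t)\,dt$ (which is not identically zero because $\eta$ is nondegenerate on $(-\infty,0)$), producing some $\lambda_->0$ and $\psi(z)=G^-(\lambda_-)^{-1}(-z)^N e^{\lambda_- z}$ on that sector, so that $\int_0^\infty\psi(-t)\eta(-t)\,\tfrac{dt}{t}=1$. Setting $\psi(0):=0$ yields $\psi\in\Psi_\sigma^\tau(S_\theta^o)$, and $\psi$ is nondegenerate since it is not identically zero on either component.

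With these choices, Proposition~\ref{Prop: Calderon} gives $\int_0^\infty\psi(sD)\eta(sD)u\,\tfrac{ds}{s}=\mathsf{P}_{\overline{\Ran(D)}}u$ for all $u\in L^2(\cV)$, and by the commutativity observed above this is exactly $\cS_\psi^D\cQ_\eta^D u=\cS_\eta^D\cQ_\psi^D u=\mathsf{P}_{\overline{\Ran(D)}}u$, which completes the argument. The only step that requires genuine thought is the normalisation: one must be sure that the Laplace-type integral $G(\lambda)$, and its analogue on the negative axis, can be made nonzero, and this is precisely where the identity theorem, the injectivity of the Laplace transform, and the nondegeneracy hypothesis on $\eta$ enter; everything else is routine bookkeeping resting on Proposition~\ref{Prop: Calderon}.
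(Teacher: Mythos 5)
Your proof is correct, and it rests on the same key reduction as the paper's: construct $\psi$ sector by sector so that $\int_0^\infty\psi(\pm t)\eta(\pm t)\,\tfrac{dt}{t}=1$, then invoke Proposition~\ref{Prop: Calderon} (with commutativity of the Borel functional calculus handling the two orderings). Where you diverge is in the construction and, more substantially, in how the normalisation is secured. The paper multiplies by $\eta^*$ and takes $\psi(z)=\alpha_{\pm}(\pm z)^{\sigma}e^{\mp 2\delta z\sec\theta}\,\eta^*(z)$, using the Paley--Wiener theorem to control $\eta^*$ in the sector; the payoff is that the normalising integral becomes $\int_0^\infty t^{\sigma}e^{-2\delta t\sec\theta}|\eta(\pm t)|^2\,\tfrac{dt}{t}$, which is \emph{automatically} positive by nondegeneracy, so no separate argument is needed, and the construction also yields $\int_0^\infty\psi(tz)\eta(tz)\,\tfrac{dt}{t}=1$ for all $z$ with $\re z\neq0$ (which the paper later exploits, e.g.\ in the proof of Theorem~\ref{Thm: HpinLPforHLMMY}, though Corollary~\ref{Cor: Repr} itself does not need it). You instead take $\psi$ to be a pure $z^N e^{-\lambda z}$ on each sector with no $\eta$ factor, so you must show the Laplace-type integral $G(\lambda)$ can be made nonzero, which you do cleanly via the identity theorem and injectivity of the Laplace transform; this avoids Paley--Wiener entirely. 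Both routes are legitimate; the paper's device is slicker at the normalisation step, while yours has the modest virtue of not analytically continuing $\eta$ and of producing a $\psi$ that is an elementary function.

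One small cosmetic point: since $\Psi_\sigma^\tau(S_\theta^o)$ allows non-integer $\sigma$, there is no need to round up to an integer $N\geq\sigma$; you could use $z^{\sigma}$ directly with the principal branch on each sector, as the paper does, which would make the $|z|\le1$ bound an equality of exponents rather than an inequality. This is purely a matter of taste and does not affect correctness.
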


\begin{proof}
Suppose that $\eta\in\widetilde\Psi^\delta_N(\R)$ for some $\delta>0$ and $N\in\N$. It follows by the Paley--Wiener Theorem that $\eta$ extends to an entire function satisfying $|\eta(z)|\leq C e^{\delta|z|}$ for some constant $C>0$ and all $z\in\C$. Now consider $\sigma,\tau>0$ and $\theta\in(0,\pi/2)$. When $\re(z)>0$, define $\psi(z) = \alpha_+ z^\sigma e^{-2\delta z\sec\theta}\eta^*(z)$, and when $\re(z)<0$, define $\psi(z) = \alpha_-(- z)^\sigma e^{2\delta z\sec\theta}\eta^*(z)$, where $\alpha_\pm$ are the normalising constants defined by
\[
\alpha_+\int_0^\infty t^\sigma e^{-2\delta t\sec\theta}|\eta(t)|^2 \, \frac{dt}{t} = 1 \qquad\text{and}\qquad
\alpha_-\int_0^\infty t^\sigma e^{-2\delta t\sec\theta}|\eta(-t)|^2 \, \frac{dt}{t} = 1.
\]
The integrals above are positive, so the normalising constants exist, and for all $z\in \C$ with $\re(z)\neq 0$, we have
\[
\int_0^\infty \psi(tz)\eta(tz) \, \frac{dt}{t} = 1.
\]
Finally, define $\psi(0)=0$ so that $\psi\in\Psi_\sigma^\tau(S_\theta^o)$, and since $\psi$ is clearly nondegenerate, the result follows from Proposition~\ref{Prop: Calderon}.
\end{proof}

The next result shows how $\widetilde\Theta(\R)$ functions interact with finite propagation speed.   In particular, the off-diagonal estimate in~\eqref{mn} is much sharper than that in~\eqref{eq:PfcOD}.

\begin{Lem}\label{Lem: ODestSchwartz}
Suppose that $D$ is a self-adjoint operator on $L^2(\cV)$ and the group $(e^{itD})_{t\in\R}$ has finite propagation speed $c_D>0$. If $\delta>0$ and $\varphi\in\widetilde{\Theta}^\delta(\R)$, then
\begin{equation}\label{mn}
\|\ca_E \varphi_t(D ) \ca_F\| \leq \tfrac{1}{\pi} \|\widehat{\varphi}\|_{\infty} \max\left\{\delta-\frac{\rho(E,F)}{c_Dt},0\right\}
\leq C e^{-\rho(E,F)/t}
\end{equation}
for all $t>0$, all measurable sets $E, F\subseteq M$, and some $C>0$. 
\end{Lem}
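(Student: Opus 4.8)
The plan is to write $\varphi_t(D)$ as a superposition of the group elements $e^{i\sigma D}$ via the Fourier inversion formula \eqref{FT}, observe that the support condition $\sppt\widehat\varphi\subseteq[-\delta,\delta]$ confines the relevant propagation times to $[-\delta t,\delta t]$, and then discard the part of the integral on which finite propagation speed forces the cut-off operator to vanish. First I would fix $t>0$ and record that, with the Fourier normalisation implicit in \eqref{FT}, the dilate $\varphi_t=\varphi(t\,\cdot)$ satisfies $\widehat{\varphi_t}(\sigma)=t^{-1}\widehat\varphi(\sigma/t)$, so that \eqref{FT} together with the change of variables $\sigma\mapsto\sigma/t$ gives
\[
\varphi_t(D)u=\frac{1}{2\pi}\int_{-\delta}^{\delta}\widehat\varphi(\sigma)\,e^{i\sigma t D}u\,d\sigma\qquad\forall u\in L^2(\cV),
\]
the truncated limits being justified by $\sppt\widehat\varphi\subseteq[-\delta,\delta]$. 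Since $\ca_E$ and $\ca_F$ act as bounded multiplication operators, they may be moved inside the (Bochner) integral, yielding
\[
\ca_E\varphi_t(D)\ca_F=\frac{1}{2\pi}\int_{-\delta}^{\delta}\widehat\varphi(\sigma)\,\ca_E e^{i\sigma t D}\ca_F\,d\sigma .
\]

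Next I would invoke finite propagation speed exactly in the form already used in the proof of Lemma~\ref{Lem: ODestRes}: $\ca_E e^{i\sigma t D}\ca_F=0$ whenever $c_D|\sigma t|<\rho(E,F)$, i.e.\ whenever $|\sigma|<\rho(E,F)/(c_D t)$. Hence the integrand is supported on $\{\sigma\in[-\delta,\delta]:|\sigma|\ge\rho(E,F)/(c_D t)\}$, a set of Lebesgue measure $2\max\{\delta-\rho(E,F)/(c_D t),0\}$. Bounding $|\widehat\varphi(\sigma)|\le\|\widehat\varphi\|_\infty$ and using that the group is unitary, so $\|\ca_E e^{i\sigma t D}\ca_F\|\le\|e^{i\sigma t D}\|=1$, gives the first inequality in \eqref{mn} with the constant $\tfrac{1}{\pi}$.

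Finally, the second inequality is a soft one-variable estimate: setting $\lambda=\rho(E,F)/t\ge 0$, the function $\lambda\mapsto\max\{\delta-\lambda/c_D,0\}\,e^{\lambda}$ is continuous on $[0,\infty)$ and vanishes identically for $\lambda\ge\delta c_D$, hence is bounded by some $C_0=C_0(\delta,c_D)$; therefore $\max\{\delta-\rho(E,F)/(c_D t),0\}\le C_0 e^{-\rho(E,F)/t}$ and $C=\tfrac{1}{\pi}\|\widehat\varphi\|_\infty C_0$ works. I do not expect a genuine obstacle here: the only points needing care are the scaling of $\widehat\varphi$ under dilation and the bookkeeping of the propagation cut-off, and once \eqref{FT} and the finite-speed statement from Lemma~\ref{Lem: ODestRes} are in hand the estimate is essentially immediate. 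The sharpness emphasised in the surrounding text is visible from the argument itself, since $\ca_E\varphi_t(D)\ca_F$ is genuinely the zero operator as soon as $\rho(E,F)>c_D\delta t$.
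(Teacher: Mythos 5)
Your proof is correct and follows essentially the same route as the paper's: Fourier inversion via \eqref{FT}, rescaling $\widehat{\varphi_t}$, discarding the portion of the integral where finite propagation speed forces $\ca_E e^{i\sigma t D}\ca_F=0$, and then bounding crudely by $\|\widehat\varphi\|_\infty$ times the measure of the surviving set. The only (cosmetic) difference is in the last inequality, where you bound $\max\{\delta-\lambda/c_D,0\}e^{\lambda}$ by continuity and compact support, while the paper uses the one-line observation $\delta-\rho(E,F)/(c_D t)\leq\delta\leq\delta e^{\delta c_D}e^{-\rho(E,F)/t}$ on the range where the left side is positive; both are fine.
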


\begin{proof}
Suppose that $\varphi\in\widetilde{\Theta}(\R)$ with $\sppt \widehat\varphi \subseteq [-\delta,\delta]$. It follows from~\eqref{FT} that
\[
\varphi_t(D )u= \frac{1}{2\pi} \int_{-\infty}^\infty \widehat{\varphi_t}(s) e^{isD}u\, d s=\frac{1}{2\pi} \int_{|s|\leq \delta t} \widehat{\varphi}\left(\frac{s}{t}\right) e^{isD}u \ \frac{d s}{t}\quad \forall t>0, \ \forall u \in L^2(\cV).
\]
Suppose that $E$ and $F$ are measurable subsets of $M$. The finite propagation speed implies $\ca_E e^{isD} \ca_F=0$ whenever $\rho(E,F)> {c_D} |s|$, hence $\ca_E \varphi_t(D) \ca_F=0$ whenever $\rho(E,F)/{c_D}>\delta t$ by the preceding formula. In addition, if $\rho(E,F) / {c_D}\leq \delta t$, then
\begin{align*}
\|\ca_E\varphi_t(D ) \ca_F\| &\leq \frac{1}{2\pi} \int_{\rho(E,F)/{c_D} \leq |s| \leq \delta t} \left|\widehat{\varphi}\left(\frac{s}{t}\right)\right| \|\ca_E  e^{isD } \ca_F\|\ \frac{d s}{t}\\
&\leq \frac{1}{2\pi} \int_{{\rho(E,F)}/{{c_D} t} \leq |\sigma| \leq \delta} |\widehat{\varphi}(\sigma)| \, d\sigma \\ 
&\leq \frac1\pi\|\widehat{\varphi}\|_{\infty} \Big(\delta - \frac{\rho(E,F)}{c_D t}\Big) \\ 
&\leq  \frac1\pi \|\widehat{\varphi}\|_{\infty} \delta e^{\delta {c_D}} e^{-\rho(E,F) / t},
\end{align*}
which completes the proof.
\end{proof}

The next two results show that $E^p_{D,\psi}(\cV)=E^p_{D,\eta}(\cV)$ for suitable $\psi$ in $\Psi(S^o_\theta)$ and $\eta$ in $\widetilde{\Psi}(\R)$. The results rely on some technical off-diagonal estimates that we postpone until Section~\ref{section:ODE}. The first result is an extension of \cite[Theorem~4.9]{AMcR}.

\begin{Prop}\label{Prop: MainEst}
Suppose that $M$ is a doubling metric measure space satisfying~\eqref{D}, that $D$ is a self-adjoint operator on $L^2(\cV)$, and the group $(e^{itD})_{t\in\R}$ has finite propagation speed. If $p\in[1,2]$, $\theta\in(0,\pi/2)$, $N\in\N$, $N>\kappa/2$, $\psi\in\Psi_{2N+1}^{N+1}(S_\theta^o)$, $\eta\in\widetilde{\Psi}_{N}(\R)$ and $\tilde\eta\in\widetilde{\Psi}(\R)$, then
\[
\|\cQ^D_{\tilde\eta}\cS^D_{\psi}U\|_{T^p} \lesssim \|U\|_{T^p}
\quad\text{and}\quad
\|\cQ^D_{\psi}\cS^D_{\eta}U\|_{T^p} \lesssim \|U\|_{T^p}
\]
for all $U\in T^p\cap T^2$.
\end{Prop}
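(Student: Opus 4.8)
The plan is to follow the scheme of \cite[Theorem~4.9]{AMcR} (with the sharpened $\Psi(S^o_\theta)$-class exponents of \cite{HvNP,CMcM}), whose only genuinely new input here is a supply of $L^2$ off-diagonal estimates for the \emph{mixed} families of operators $\theta^{(1)}_{t,s}:=\tilde\eta(tD)\psi(sD)$ and $\theta^{(2)}_{t,s}:=\psi(tD)\eta(sD)$, which pair the holomorphic functional calculus of $D$ with the Borel (Fourier) one. By Lemma~\ref{Lem: ODestRes}, $D$ satisfies \eqref{H1}--\eqref{H3} with $\omega=0$ and $m=1$, so the machinery of Section~\ref{section: Pointwise} is available. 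Granting the off-diagonal estimates --- which I would collect in the appendix, Section~\ref{section:ODE} --- the proof reduces to the cases $p=2$ and $p=1$, with $p\in(1,2)$ filled in by complex interpolation of tent spaces, \eqref{TpInterp}.

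The case $p=2$ is immediate: $\cS^D_\psi\in\mathcal{L}(T^2,L^2)$ and $\cQ^D_\psi\in\mathcal{L}(L^2,T^2)$ by Definition~\ref{Def: QandS}, while $\cS^D_\eta\in\mathcal{L}(T^2,L^2)$ and $\cQ^D_{\tilde\eta}\in\mathcal{L}(L^2,T^2)$ as recorded after \eqref{FT}, the latter because $\|\cQ^D_{\tilde\eta}u\|_{T^2}^2=\int_0^\infty\|\tilde\eta(tD)u\|_2^2\,\tfrac{dt}{t}\lesssim\|u\|_2^2$ by the spectral theorem and $\int_0^\infty|\tilde\eta(\pm t)|^2\,\tfrac{dt}{t}<\infty$; composing these maps bounds $\cQ^D_{\tilde\eta}\cS^D_\psi$ and $\cQ^D_\psi\cS^D_\eta$ on $T^2$. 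I also record for later use the scaling bounds $\|\theta^{(1)}_{t,s}\|\lesssim\min\{(s/t)^{2N+1},t/s\}$ and $\|\theta^{(2)}_{t,s}\|\lesssim\min\{(s/t)^{N},(t/s)^{2N+1}\}$, which are immediate from $\|f(D)\|\le\|f\|_\infty$ applied to the product symbols $z\mapsto\tilde\eta(tz)\psi(sz)$ and $z\mapsto\psi(tz)\eta(sz)$, using the vanishing moments of $\tilde\eta$ and $\eta$ at $0$, the Schwartz decay of $\tilde\eta,\eta$ at $\infty$, and the two-sided power bounds on $\psi$.

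For $p=1$ I would use the atomic decomposition of $T^1$, Theorem~\ref{maintentatomic}: combined with the $T^2$-boundedness above and the $T^2$- and almost-everywhere convergence of that decomposition, it suffices to prove $\|\cQ^D_{\tilde\eta}\cS^D_\psi A\|_{T^1}\lesssim 1$ and $\|\cQ^D_\psi\cS^D_\eta A\|_{T^1}\lesssim 1$ uniformly over $T^1$-atoms $A$. Fix such an $A$, supported in a tent $T(B)$ over a ball $B$ of radius $r$, so that $A_s$ is supported in $B$ and vanishes for $s>r$, whence $(\cQ^D_{\tilde\eta}\cS^D_\psi A)_t=\int_0^r\theta^{(1)}_{t,s}A_s\,\tfrac{ds}{s}$. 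Decompose $M$ into $2B$ and the annuli $C_j(B)=2^{j+1}B\setminus 2^jB$: estimating the $T^2$-norm of each piece $\ca_{C_j(B)}\cQ^D_{\tilde\eta}\cS^D_\psi A$ and applying Hölder's inequality on the cones defining the $T^1$-norm costs a volume factor $\mu(2^{j+1}B)^{1/2}\lesssim 2^{(j+1)\kappa/2}\mu(B)^{1/2}$, so one needs a gain in $j$ strictly better than $2^{-j\kappa/2}$. Over the range $t\gtrsim 2^jr$, where off-diagonal decay is ineffective, the scaling bound $\|\theta^{(1)}_{t,s}\|\lesssim(s/t)^{2N+1}\le(r/t)^{2N+1}$ produces, after integrating in $t$, the gain $2^{-j(2N+1)}$, and likewise $\|\theta^{(2)}_{t,s}\|\lesssim(s/t)^{N}\le(r/t)^{N}$ yields $2^{-jN}$ for the second composition --- this is exactly where the binding hypothesis $N>\kappa/2$ enters. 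The complementary range $t\lesssim 2^jr$ is handled by the $L^2$ off-diagonal estimates for $\theta^{(1)}_{t,s}$ and $\theta^{(2)}_{t,s}$.

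Those off-diagonal estimates are the technical heart, and the main obstacle --- hence their relegation to the appendix. They are obtained by pairing the sharp finite-propagation bound $\|\ca_E\varphi_t(D)\ca_F\|\lesssim e^{-\rho(E,F)/t}$ for $\varphi\in\widetilde{\Theta}(\R)$ from Lemma~\ref{Lem: ODestSchwartz} with the polynomial off-diagonal bound \eqref{eq:PfcOD} for the holomorphic factor (with $m=1$, hence of order $2N+1-\delta$ for any $\delta\in(0,2N+1)$): inserting $I=\ca_G+\ca_{M\setminus G}$, where $G=\{x\in M:\rho(x,E)<\rho(E,F)/2\}$, between the two functional-calculus factors of $\theta^{(i)}_{t,s}$ and estimating each term, one arrives at a bound $\|\ca_E\theta^{(i)}_{t,s}\ca_F\|\lesssim\min\{s/t,t/s\}^{\gamma_i}\,\langle\tfrac{\max(s,t)}{\rho(E,F)}\rangle^{K}$ with $\gamma_1=2N+1$, $\gamma_2=N$ and $K$ as large as one pleases. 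The delicate part is the bookkeeping: carrying both decay mechanisms --- exponential for the Schwartz factor, polynomial of order $\sim 2N+1$ for the $\Psi(S^o_\theta)$ factor --- through the simultaneous $s$- and $t$-integrations against the support of the atom, and verifying that the orders available ($\psi$ vanishing to order $2N+1$ at the origin, decaying like $|z|^{-(N+1)}$ at infinity, against the single moment of $\tilde\eta$ or the $N$ moments of $\eta$) still leave more cancellation and off-diagonal decay than the volume exponent $\kappa$ demands. Once these are in place, the passage to general $p\in[1,2]$ and to $U\in T^p\cap T^2$ is routine, exactly as in \cite{AMcR}.
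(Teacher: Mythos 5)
Your outline matches the paper's proof in structure and in every essential ingredient: reduce to $p=2$ (trivial) and $p=1$ (uniform bound on $T^1$-atoms via Theorem~\ref{maintentatomic}), fill in $p\in(1,2)$ by the tent-space interpolation~\eqref{TpInterp}, and drive the $p=1$ case by mixed $L^2$ off-diagonal estimates obtained by inserting $\ca_G+\ca_{M\setminus G}$ between the Schwartz factor (exponential decay from Lemma~\ref{Lem: ODestSchwartz}) and the $\Psi(S^o_\theta)$ factor (polynomial decay from~\eqref{eq:PfcOD}). This is exactly what the paper does, delegating the bookkeeping to Lemma~\ref{Lem: ODestSchwartz+Psi} and Lemma~\ref{Lem: ODestComp}, and then citing Step~2 of \cite[Theorem~4.9]{AMcR}.

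One imprecision worth flagging. You summarise the mixed off-diagonal estimate as
$\|\ca_E\theta^{(i)}_{t,s}\ca_F\|\lesssim\min\{s/t,t/s\}^{\gamma_i}\,\langle\max(s,t)/\rho(E,F)\rangle^{K}$
with ``$K$ as large as one pleases''. That is too strong. Because the scaling gain $(s/t)^n$ for $s\le t$ (resp.\ $(t/s)^m$ for $t\le s$) is manufactured by transferring $n$ powers of $z$ from the Schwartz factor to the holomorphic factor (resp.\ $m$ powers the other way), it is paid for out of the finite vanishing/decay order $\sigma,\tau$ of $\psi$: the off-diagonal exponent available for the $\Psi$ factor after the transfer is roughly $\sigma-n-\delta$ or $\sigma+m-\delta$, so $\gamma$ and $K$ trade off against each other with $\gamma+K$ bounded in terms of $2N+1$. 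The paper's Lemma~\ref{Lem: ODestSchwartz+Psi} records this trade-off explicitly, yielding (with $m=1$) exponent pairs like $\big((s/t)^N,\langle t/\rho\rangle^{N}\big)$ and $\big((t/s),\langle s/\rho\rangle^{2N+1}\big)$, both of which suffice once $N>\kappa/2$. Your earlier asymmetric operator-norm bounds $\min\{(s/t)^{2N+1},t/s\}$ and $\min\{(s/t)^{N},(t/s)^{2N+1}\}$ are correct, but the symmetric form with arbitrary $K$ in your summary is not, and the careful reader should be pointed to the genuine constraint. Since the exponents actually achievable already clear the threshold $\kappa/2$, this does not affect the validity of the argument; it only affects how the appendix lemma must be stated.
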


\begin{proof}
The proof follows~\cite[Theorem~4.9]{AMcR}. When $p=2$, the result is immediate. When $p=1$, it suffices to show that there exists $C>0$ such that
\begin{equation}\label{eq: at.red.main}
\|\cQ^D_{\tilde\eta}\cS^D_\svt{\psi}(A)\|_{T^1} \leq C
\quad\text{and}\quad
\|\cQ^D_{\psi}\cS^D_{\eta}(A)\|_{T^1} \leq C
\end{equation}
for all $A$ that are $T^1$-atoms, since Theorem~\ref{maintentatomic} applies. When $p\in(1,2)$, the result then follows by the interpolation in~\eqref{TpInterp}. Therefore, it remains to prove~\eqref{eq: at.red.main}.

Lemma~\ref{Lem: ODestSchwartz+Psi} applied with $(m,n,N,\sigma,\tau,\delta)=(1,N,1,2N+1,N+1,1)$ shows that
\[
\|\ca_E(\tilde{\eta}_t\psi_s)(D )\ca_F\| \leq C
  \begin{cases}
    (s/t)^{N} \langle t/\rho(E,F)\rangle^{N},
    &\textrm{if } 0<s\leq t; \\
    (t/s) \langle s/\rho(E,F)\rangle^{2N+1},
    &\textrm{if } 0<t\leq s,
  \end{cases}
\]
for all measurable sets $E, F\subseteq M$. Since $(\psi_t\eta_s)(D)=(\eta_s\psi_t)(D)$, Lemma~\ref{Lem: ODestSchwartz+Psi} applied with $(m,n,N,\sigma,\tau,\delta)=(N,1,N,2N+1,N+1,1)$ also shows that 
\[
\|\ca_E(\psi_t\eta_s)(D )\ca_F\| \leq C
  \begin{cases}
    (s/t)^{N} \langle t/\rho(E,F)\rangle^{3{N}},
    &\textrm{if } 0<s\leq t; \\
    (t/s) \langle s/\rho(E,F)\rangle^{2N-1},
    &\textrm{if } 0<t\leq s,
  \end{cases}
\]
for all measurable sets $E, F\subseteq M$. These estimates combined with~\eqref{D} prove~\eqref{eq: at.red.main} as in Step~2 of the proof of Theorem~4.9 in~\cite{AMcR}.  This completes the proof.
\end{proof}

The second result is an extension of \cite[Lemma 5.2]{AMcR}.

\begin{Prop}\label{Prop: AMcRLem5.2FPS}
Suppose that $M$ is a doubling metric measure space satisfying~\eqref{D}, that $D$ is a self-adjoint operator on $L^2(\cV)$, and the group $(e^{itD})_{t\in\R}$ has finite propagation speed. If $p\in[1,2]$, $\theta\in(0,\pi/2)$, $\beta>\kappa/2$, $N\in\N$, $N>\kappa/2$, and all of $\psi \in \Psi_{\beta}(S^o_\theta)$, $\eta \in \widetilde{\Psi}_{N}(\R)$ and $\tilde\eta \in \widetilde{\Psi}(\R)$ are nondegenerate, then
\begin{equation}\label{eq:SFPS}
\cS^D_\psi(T^p\cap T^2) = \cS^D_\eta(T^p\cap T^2) = \{u\in\overline{\Ran(D)} : \cQ^D_{\tilde\eta} u \in T^p\}
\end{equation}
with the norm equivalence
\begin{equation}\label{eq:QFPS}
\|u\|_{E^p_{D,\psi}} \eqsim \|u\|_{E^p_{D,\eta}} \eqsim \|\cQ^D_{\tilde\eta} u\|_{T^p} \qquad \forall u\in E^p_{D,\psi}=\cS^D_{\psi}(T^p\cap T^{2}).
\end{equation}
Moreover, if the completion $H^p_{\cD,\psi}$ of $E^p_{\cD,\psi}$ in $L^p$ exists, and $H^p_{\cD,\psi}\cap L^2=E^p_{\cD,\psi}$, then there are unique extensions $\cS^D_{\eta}\in\mathcal{L}(T^p,H^p_{D,\psi})$ and  $\cQ^D_{\tilde\eta} \in \mathcal{L}(H^p_{D,\psi},T^p)$, and $H^p_{D,\psi} = \cS^D_{\eta}(T^p)$ with the norm equivalence
\begin{equation}\label{eq:HpEquivNormsFPS}
\|u\|_{H^p_{D,\psi}}
\eqsim 
\inf\{\|U\|_{T^p} : U\in T^p \text{ and } u=\cS^D_{\eta} U\}
\eqsim \|\cQ^D_{\tilde\eta}u\|_{T^p} \quad \forall u\in H^p_{D,\psi}.
\end{equation}
\end{Prop}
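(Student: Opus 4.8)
The plan is to follow the template of the proof of Theorem~\ref{Thm: AMcR_4.9_5.2}, the crucial difference being that there is \emph{no} Calder\'on reproducing formula relating two functions of the class $\widetilde\Psi(\R)$, and no boundedness estimate for $\cQ^D_{\eta_1}\cS^D_{\eta_2}$ with $\eta_1,\eta_2\in\widetilde\Psi(\R)$; consequently every comparison involving $\eta$ or $\tilde\eta$ must be routed through an auxiliary \emph{holomorphic} function supplied by Corollary~\ref{Cor: Repr} or Proposition~\ref{Prop: AMcR_Remark2.1}, and exploited through the mixed estimates of Proposition~\ref{Prop: MainEst}. Throughout one uses that $D$ satisfies \eqref{H1}--\eqref{H3} with $\omega=0$ and $m=1$ by Lemma~\ref{Lem: ODestRes}, so that Theorem~\ref{Thm: AMcR_4.9_5.2} and Proposition~\ref{Prop: AMcR_Remark2.1} apply to $D$. \emph{Reduction and auxiliary functions.} Since $N>\kappa/2$, the exponent $\beta_0:=\min\{\beta,2N+1\}$ satisfies $\beta_0>\kappa/2$. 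Fix a nondegenerate $\psi'\in\Psi_{2N+1}^{N+1}(S^o_\theta)$; then $\psi,\psi'\in\Psi_{\beta_0}(S^o_\theta)$, so Theorem~\ref{Thm: AMcR_4.9_5.2} gives $E^p_{D,\psi}=E^p_{D,\psi'}$ with $\|\cdot\|_{E^p_{D,\psi}}\eqsim\|\cdot\|_{E^p_{D,\psi'}}$, whence the completion of $E^p_{D,\psi}$ in $L^p$ exists (and meets $L^2$ in $E^p_{D,\psi}$) if and only if the same holds for $\psi'$, the completions then being equal with equivalent norms. It thus suffices to prove the proposition with $\psi'$ in place of $\psi$. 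Using Corollary~\ref{Cor: Repr} (with $\eta,\tilde\eta\in\widetilde\Psi(\R)$ nondegenerate) I would fix nondegenerate $\phi_\eta,\phi_{\tilde\eta}\in\Psi_{\beta_0}^{\beta_0}(S^o_\theta)$ with $\cS^D_{\phi_\eta}\cQ^D_{\eta}=\cS^D_{\eta}\cQ^D_{\phi_\eta}=\mathsf{P}_{\overline{\Ran(D)}}$ and $\cS^D_{\phi_{\tilde\eta}}\cQ^D_{\tilde\eta}=\cS^D_{\tilde\eta}\cQ^D_{\phi_{\tilde\eta}}=\mathsf{P}_{\overline{\Ran(D)}}$ (the construction there yields functions with exponential decay, so decay and vanishing orders may be prescribed freely), and using Proposition~\ref{Prop: AMcR_Remark2.1} I would fix a nondegenerate $\chi\in\Psi_{2N+1}^{N+1}(S^o_\theta)$ with $\cS^D_{\psi'}\cQ^D_{\chi}=\mathsf{P}_{\overline{\Ran(D)}}$.

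\emph{The identities \eqref{eq:SFPS} and \eqref{eq:QFPS}.} I would establish three inclusions. First, if $u=\cS^D_{\eta}U$ with $U\in T^p\cap T^2$, then $u\in\overline{\Ran(D)}$ by \eqref{eq:BPfcRan} and $u=\mathsf{P}_{\overline{\Ran(D)}}u=\cS^D_{\psi'}(\cQ^D_{\chi}\cS^D_{\eta}U)$, where $\cQ^D_{\chi}\cS^D_{\eta}U\in T^2$ and $\|\cQ^D_{\chi}\cS^D_{\eta}U\|_{T^p}\lesssim\|U\|_{T^p}$ by Proposition~\ref{Prop: MainEst}; taking the infimum over $U$ yields $E^p_{D,\eta}\subseteq E^p_{D,\psi'}$ with $\|u\|_{E^p_{D,\psi'}}\lesssim\|u\|_{E^p_{D,\eta}}$. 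Second, if $u=\cS^D_{\psi'}U$ with $U\in T^p\cap T^2$, then $u=\mathsf{P}_{\overline{\Ran(D)}}u=\cS^D_{\eta}(\cQ^D_{\phi_\eta}\cS^D_{\psi'}U)$, where $\cQ^D_{\phi_\eta}\cS^D_{\psi'}U\in T^2$ and $\|\cQ^D_{\phi_\eta}\cS^D_{\psi'}U\|_{T^p}\lesssim\|U\|_{T^p}$ by \eqref{eq:AMcR4.9} applied with $\psi'\in\Psi_{\beta_0}$ and $\phi_\eta\in\Psi^{\beta_0}$; the infimum gives $E^p_{D,\psi'}\subseteq E^p_{D,\eta}$ with $\|u\|_{E^p_{D,\eta}}\lesssim\|u\|_{E^p_{D,\psi'}}$. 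Third, if $u=\cS^D_{\psi'}U$ then $\cQ^D_{\tilde\eta}u=\cQ^D_{\tilde\eta}\cS^D_{\psi'}U\in T^p$ with $\|\cQ^D_{\tilde\eta}u\|_{T^p}\lesssim\|U\|_{T^p}$ by Proposition~\ref{Prop: MainEst}, so $\|\cQ^D_{\tilde\eta}u\|_{T^p}\lesssim\|u\|_{E^p_{D,\psi'}}$; conversely, if $u\in\overline{\Ran(D)}$ and $\cQ^D_{\tilde\eta}u\in T^p$, then $\cQ^D_{\tilde\eta}u\in T^p\cap T^2$ (since $\cQ^D_{\tilde\eta}\in\mathcal L(L^2,T^2)$) and $u=\cS^D_{\phi_{\tilde\eta}}\cQ^D_{\tilde\eta}u\in\cS^D_{\phi_{\tilde\eta}}(T^p\cap T^2)=E^p_{D,\phi_{\tilde\eta}}=E^p_{D,\psi'}$, the last identity by Theorem~\ref{Thm: AMcR_4.9_5.2} since $\phi_{\tilde\eta},\psi'\in\Psi_{\beta_0}$ are nondegenerate, with $\|u\|_{E^p_{D,\psi'}}\lesssim\|\cQ^D_{\tilde\eta}u\|_{T^p}$. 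Combining the three inclusions with the reduction yields \eqref{eq:SFPS} and \eqref{eq:QFPS}.

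\emph{The completion.} Assuming $H^p_{D,\psi}$ exists with $H^p_{D,\psi}\cap L^2=E^p_{D,\psi}$, the reduction gives $H^p_{D,\psi}=H^p_{D,\psi'}$ with equivalent norms and $H^p_{D,\psi'}\cap L^2=E^p_{D,\psi'}$, so I work with $\psi'$. For $U\in T^p\cap T^2$ one has $\cS^D_{\eta}U\in E^p_{D,\eta}=E^p_{D,\psi'}\subseteq H^p_{D,\psi'}$ and $\|\cS^D_{\eta}U\|_{H^p_{D,\psi'}}=\|\cS^D_{\eta}U\|_{E^p_{D,\psi'}}\eqsim\|\cS^D_{\eta}U\|_{E^p_{D,\eta}}\le\|U\|_{T^p}$; since $T^p\cap T^2$ is dense in $T^p$ and $H^p_{D,\psi'}$ is complete, $\cS^D_{\eta}$ extends uniquely to $\cS^D_{\eta}\in\mathcal L(T^p,H^p_{D,\psi'})$. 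Likewise, by the third inclusion and the density of $E^p_{D,\psi'}=H^p_{D,\psi'}\cap L^2$ in $H^p_{D,\psi'}$, the operator $\cQ^D_{\tilde\eta}$ extends uniquely to $\cQ^D_{\tilde\eta}\in\mathcal L(H^p_{D,\psi'},T^p)$. Given $u\in H^p_{D,\psi'}$ and $u_n\in E^p_{D,\psi'}$ with $u_n\to u$ in $H^p_{D,\psi'}$, the sequence $\cQ^D_{\phi_\eta}u_n$ is Cauchy in $T^p$ because $\|\cQ^D_{\phi_\eta}(u_n-u_m)\|_{T^p}\eqsim\|u_n-u_m\|_{E^p_{D,\psi'}}=\|u_n-u_m\|_{H^p_{D,\psi'}}$ by \eqref{eq:AMcR5.2Norms}; its limit $V\in T^p$ satisfies $\cS^D_{\eta}V=\lim_n\cS^D_{\eta}\cQ^D_{\phi_\eta}u_n=\lim_n u_n=u$ in $H^p_{D,\psi'}$ (using continuity of the extended $\cS^D_{\eta}$ and $\cS^D_{\eta}\cQ^D_{\phi_\eta}=\mathsf{P}_{\overline{\Ran(D)}}$ together with $u_n\in\overline{\Ran(D)}$), so $H^p_{D,\psi'}=\cS^D_{\eta}(T^p)$ and $\|V\|_{T^p}=\lim_n\|\cQ^D_{\phi_\eta}u_n\|_{T^p}\eqsim\lim_n\|u_n\|_{E^p_{D,\psi'}}=\|u\|_{H^p_{D,\psi'}}$. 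The norm equivalences in \eqref{eq:HpEquivNormsFPS} then follow: $\|u\|_{H^p_{D,\psi'}}\lesssim\|U\|_{T^p}$ for every representation $u=\cS^D_{\eta}U$ by boundedness of the extension, the reverse from the representation $u=\cS^D_{\eta}V$ just produced, and $\|\cQ^D_{\tilde\eta}u\|_{T^p}=\lim_n\|\cQ^D_{\tilde\eta}u_n\|_{T^p}\eqsim\lim_n\|u_n\|_{E^p_{D,\psi'}}=\|u\|_{H^p_{D,\psi'}}$ by continuity of the extended $\cQ^D_{\tilde\eta}$ and the third inclusion. Translating back to $\psi$ via the reduction finishes the argument.

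The main obstacle is precisely the absence of any reproducing formula or boundedness estimate relating two members of the $\widetilde\Psi(\R)$ class, which forces every comparison to pass through a holomorphic intermediary; a secondary point is the passage from the given $\psi\in\Psi_\beta(S^o_\theta)$ to a function with the vanishing and decay orders $(2N+1,N+1)$ demanded by Proposition~\ref{Prop: MainEst}, which is legitimate only because $\min\{\beta,2N+1\}>\kappa/2$ allows the purely holomorphic Theorem~\ref{Thm: AMcR_4.9_5.2} to be invoked. Once the auxiliary functions are fixed, the remaining steps are the routine density-and-extension manipulations already used in the proof of Theorem~\ref{Thm: AMcR_4.9_5.2}.
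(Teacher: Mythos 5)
Your proof is correct and follows essentially the same route as the paper: reduce via Theorem~\ref{Thm: AMcR_4.9_5.2} to a nondegenerate $\psi'\in\Psi_{2N+1}^{N+1}(S^o_\theta)$, introduce holomorphic auxiliaries through Corollary~\ref{Cor: Repr} (and, in your variant, Proposition~\ref{Prop: AMcR_Remark2.1}), route every $\widetilde\Psi(\R)$--$\widetilde\Psi(\R)$ comparison through those auxiliaries using Proposition~\ref{Prop: MainEst}, and finish with the standard density/extension argument. The paper is slightly more economical in normalising $\psi'$ so that $\cS^D_{\psi'}\cQ^D_{\psi'}=\mathsf{P}_{\overline{\Ran(D)}}$, which eliminates the auxiliary $\chi$ and lets the set equalities be established by a single cyclic chain of three inclusions rather than your two pairs, but this is a cosmetic difference with no bearing on correctness.
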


\begin{proof}
It suffices, by Theorem~\ref{Thm: AMcR_4.9_5.2}, to prove the result for a fixed nondegenerate $\psi$ in $\Psi_{\beta}(S^o_\theta)$, so we select $\psi$ in $\Psi_{2N+1}^{N+1}(S^o_\theta)$ satisfying $\int_0^\infty \psi(\pm t)^2 \frac{dt}{t}=1$. Suppose that both $\eta\in\widetilde{\Psi}_{N}(\R)$ and $\tilde\eta\in\widetilde{\Psi}(\R)$ are nondegenerate, and then use Corollary~\ref{Cor: Repr} to obtain $\varphi$ and $\tilde{\varphi}$ in $\Psi_{2N+1}^{N+1}(S^o_\theta)$ such that $\cS^D_\eta\cQ^D_\varphi = \cS^D_{\tilde \varphi}\cQ^D_{\tilde\eta} = \mathsf{P}_{\overline{\Ran(D)}} = \cS^D_{\psi}\cQ^D_{\psi}$. The proof of  \eqref{eq:SFPS} and \eqref{eq:QFPS} proceeds in three parts corresponding to the set inclusions
\[
\cS^D_\psi(T^p\cap T^2) \overbrace{\subseteq}^{\text{(i)}} \cS^D_\eta(T^p\cap T^2)
\overbrace{\subseteq}^{\text{(ii)}} \{u\in\overline{\Ran(D)} : \cQ^D_{\tilde\eta} u \in T^p\} \overbrace{\subseteq}^{\text{(iii)}} \cS^D_\psi(T^p\cap T^2)
\]
and the related norm estimates.

(i) If $u\in \cS^D_{\psi}(T^p\cap T^2)$, then~\eqref{eq:AMcR5.2Sets} implies that $u\in\overline{\Ran(D)}$ and $\cQ^D_{\psi} u \in T^p\cap T^2$, so $u=\cS^D_\eta(\cQ^D_\varphi \cS^D_{\psi} \cQ^D_{\psi} u)$ and~\eqref{eq:AMcR4.9} followed by \eqref{eq:AMcR5.2Norms} imply that
\[
\|u\|_{E^p_{D,\eta}} \leq \|\cQ^D_\varphi \cS^D_{\psi} (\cQ^D_{\psi} u)\|_{T^p} \lesssim \|\cQ^D_{\psi} u\|_{T^p} \eqsim \|u\|_{E^p_{D,{\psi}}}.
\]

(ii) If $u\in \cS^D_\eta(T^p\cap T^2)$, then $u\in\overline{\Ran(D)}$ by~\eqref{eq:BPfcRan}, and there exists $V \in T^p\cap T^2$ such that $u=\cS^D_\eta(V)$ and $\|V\|_{T^p}\leq 2\|u\|_{E^p_{D,\eta}}$, so by applying Proposition~\ref{Prop: MainEst} twice we obtain
\[
\|\cQ^D_{\tilde\eta}u\|_{T^p} = \|\cQ^D_{\tilde\eta} \cS^D_{\psi} (\cQ^D_{\psi} \cS^D_\eta V)\|_{T^p} \lesssim \|V\|_{T^p} \lesssim \|u\|_{E^p_{D,\eta}}.
\]

(iii) If $u\in\overline{\Ran(D)}$ and $\cQ^D_{\tilde\eta} u \in T^p$, then $u=\cS^D_{\psi} (\cQ^D_{\psi} \cS^D_{\tilde \varphi} \cQ^D_{\tilde\eta} u)$, so~\eqref{eq:AMcR4.9} implies that
\[
\|u\|_{E^p_{D,{\psi}}} \leq \|\cQ^D_{\psi} \cS^D_{\tilde \varphi} (\cQ^D_{\tilde\eta} u)\|_{T^p} \lesssim \|\cQ^D_{\tilde\eta} u\|_{T^p}.
\]
We obtain \eqref{eq:HpEquivNormsFPS} by the arguments used to prove \eqref{eq:HpEquivNorms}. This completes the proof.
\end{proof}

We now introduce hypothesis~\ref{SFinLqFPS} on $D$ in order to prove that the completion of $E^p_{D,\psi}(\cV)$ in $L^p(\cV)$ exists. This provides an alternative to hypothesis~\ref{SFinLq} from Theorem~\ref{Thm: H1injectivity} when $D$ is self-adjoint and $(e^{itD})_{t\in\R}$ has finite propagation speed.  The advantage of hypothesis~\ref{SFinLqFPS} is that $\cS_\eta^DF$ has compact support whenever $F$ has compact support, and as such, it is more easily verified that $\cS_\eta^DF \in L^{q'}(\cV)$.

\begin{Thm}\label{Thm: H1injectivityFPS}
Suppose that $M$ is a doubling metric measure space satisfying~\eqref{D}, that $D$ is a self-adjoint operator on $L^2(\cV)$, and $(e^{itD})_{t\in\R}$ has finite propagation speed. If $1\leq q\leq p\leq 2$, $\theta\in(0,\pi/2)$, $\beta>\kappa/2$, $\psi\in\Psi_{\beta}(S^o_\theta)$ is nondegenerate and
\begin{equation}\tag*{(H4)$_{\widetilde\Psi}$}\label{SFinLqFPS}
\begin{minipage}[c]{0.85\textwidth}
there exists a nondegenerate function $\eta\in\widetilde{\Psi}(\R)$ such that the set\\
$\{F \in T^2\cap T^{q'} : \cS^D_\eta F \in L^{q'}(\cV)\}$ is weak-star dense in $T^{p'}(\cV_+)$, 
\end{minipage}
\end{equation}
where $1/q+1/q'=1$, then the completion $H^p_{D,\psi}(\cV)$ of $E^p_{D,\psi}(\cV)$ in $L^p(\cV)$ exists. Moreover, it holds that $H^p_{D,\psi}(\cV)\cap L^2(\cV)=E^p_{D,\psi}(\cV)$.
\end{Thm}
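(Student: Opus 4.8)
The strategy is to follow the proof of Theorem~\ref{Thm: H1injectivity} almost verbatim, using the $\widetilde\Psi(\R)$-class machinery of this section in place of the $\Psi(S^o_\theta)$-class machinery of Section~\ref{section: Pointwise}. By Lemma~\ref{Lem: ODestRes}, $D$ satisfies \eqref{H1}--\eqref{H3} with $\omega=0$ and $m=1$, so $\kappa/2m=\kappa/2$ and all the results of Section~\ref{section: Pointwise} apply with $\cD=D$; in particular Lemma~\ref{Lem: EpinLp} gives $E^p_{D,\psi}(\cV)\subseteq L^p(\cV)$, and $E^p_{D,\psi}(\cV)\subseteq\cS^D_\psi(T^2)=\overline{\Ran(D)}\subseteq L^2(\cV)$. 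Let $\eta\in\widetilde\Psi(\R)$ be the nondegenerate function provided by~\ref{SFinLqFPS} and set $\tilde\eta:=\eta^*$, which is again nondegenerate and lies in $\widetilde\Psi(\R)=\widetilde\Psi_1(\R)$. Since $D$ is self-adjoint and $\eta^*(0)=0$, the Borel functional calculus gives $(\tilde\eta(sD))^*=\eta(sD)$ for all $s>0$, so the bounded operator $\cQ^D_{\tilde\eta}\in\mathcal{L}(L^2,T^2)$ has adjoint $(\cQ^D_{\tilde\eta})^*=\cS^D_\eta$; that is,
\[
\langle\cQ^D_{\tilde\eta}u,F\rangle_{T^2}=\langle u,\cS^D_\eta F\rangle\qquad\forall u\in L^2(\cV),\ \forall F\in T^2(\cV_+).
\]
Finally, fixing any nondegenerate $\eta_0\in\widetilde\Psi_N(\R)$ with $N\in\N$ and $N>\kappa/2$, Proposition~\ref{Prop: AMcRLem5.2FPS} (applied with this $\eta_0$ and with $\tilde\eta$) yields the norm equivalence $\|u\|_{E^p_{D,\psi}}\eqsim\|\cQ^D_{\tilde\eta}u\|_{T^p}$ for all $u\in E^p_{D,\psi}(\cV)$, together with~\eqref{eq:HpEquivNormsFPS} once the completion is known to exist.

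To show the completion exists I would verify condition~(3) of Proposition~\ref{Prop: relativecomp} with $X=E^p_{D,\psi}(\cV)$ and $Y=L^p(\cV)$. Let $(u_n)_n$ be Cauchy in $E^p_{D,\psi}$ and convergent to $0$ in $L^p$. By the norm equivalence, $(\cQ^D_{\tilde\eta}u_n)_n$ is Cauchy in $T^p$, hence converges to some $U\in T^p(\cV_+)$. Put $\mathscr{E}:=\{F\in T^2\cap T^{q'}:\cS^D_\eta F\in L^{q'}(\cV)\}$, which is weak-star dense in $T^{p'}$ by~\ref{SFinLqFPS}. For $F\in\mathscr{E}$,
\[
|\langle U,F\rangle_{T^2}|\leq|\langle U-\cQ^D_{\tilde\eta}u_n,F\rangle_{T^2}|+|\langle u_n,\cS^D_\eta F\rangle|\lesssim\|U-\cQ^D_{\tilde\eta}u_n\|_{T^p}\|F\|_{T^{p'}}+\|u_n\|_{L^p}\|\cS^D_\eta F\|_{L^{p'}},
\]
where $2\leq p'\leq q'$ makes $T^2\cap T^{q'}\subseteq T^{p'}$ and $L^2\cap L^{q'}\subseteq L^{p'}$ available; letting $n\to\infty$ gives $\langle U,F\rangle_{T^2}=0$ for all $F\in\mathscr{E}$, so $U=0$ by weak-star density and~\eqref{TpDual}. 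Hence $\|u_n\|_{E^p_{D,\psi}}\eqsim\|\cQ^D_{\tilde\eta}u_n\|_{T^p}\to0$, which is~(3), and so $H^p_{D,\psi}(\cV)$ exists. For $H^p_{D,\psi}(\cV)\cap L^2(\cV)=E^p_{D,\psi}(\cV)$, the inclusion $\supseteq$ is immediate from $E^p_{D,\psi}\subseteq H^p_{D,\psi}$ and $E^p_{D,\psi}\subseteq L^2(\cV)$, and for $\subseteq$ I would repeat the second paragraph of the proof of Theorem~\ref{Thm: H1injectivity} with $\cS^{\cD^*}_{\tilde\psi^*}$ replaced by $\cS^D_\eta$: density produces $(u_n)_n\subseteq E^p_{D,\psi}$ with $u_n\to u$ in $H^p_{D,\psi}$ and in $L^p$, the same pairing estimate forces $\cQ^D_{\tilde\eta}u=U\in T^p\cap T^2$, and then~\eqref{eq:SFPS}--\eqref{eq:QFPS}, after noting $u\in\overline{\Ran(D)}$ via $\eta^*(0)=0$ and the decomposition $L^2(\cV)=\overline{\Ran(D)}\oplus N(D)$, yields $u\in E^p_{D,\psi}$.

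I expect no deep obstacle: the genuinely hard analytic input is already in hand, namely the sharp off-diagonal bounds of Lemma~\ref{Lem: ODestSchwartz} and Section~\ref{section:ODE} feeding Proposition~\ref{Prop: MainEst}, and the ensuing identification $\cS^D_\psi(T^p\cap T^2)=\cS^D_\eta(T^p\cap T^2)$ with the norm control $\|u\|_{E^p_{D,\psi}}\eqsim\|\cQ^D_{\tilde\eta}u\|_{T^p}$ in Proposition~\ref{Prop: AMcRLem5.2FPS}. What remains demands care rather than ingenuity: the functional-calculus bookkeeping identifying $(\cQ^D_{\tilde\eta})^*$ with $\cS^D_\eta$, so that the duality pairing attaches the $T^p$-norm of $\cQ^D_{\tilde\eta}u$ to the set $\mathscr{E}$ appearing in~\ref{SFinLqFPS}, and tracking the exponent constraint $1\leq q\leq p\leq2$, which is exactly what delivers $2\leq p'\leq q'$ and hence the inclusions $L^2\cap L^{q'}\subseteq L^{p'}$ and $T^2\cap T^{q'}\subseteq T^{p'}$ on which both pairing estimates rely.
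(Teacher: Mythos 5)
Your proof is correct and follows essentially the same route as the paper's: you verify condition (3) of Proposition~\ref{Prop: relativecomp} using the norm equivalence from Proposition~\ref{Prop: AMcRLem5.2FPS}, the $T^2$ duality pairing from~\eqref{TpDual}, and the weak-star density supplied by~\ref{SFinLqFPS}, then rerun the argument to identify $H^p_{D,\psi}(\cV)\cap L^2(\cV)$ with $E^p_{D,\psi}(\cV)$. One point where you are in fact slightly more careful than the printed proof: you take $\tilde\eta := \eta^*$ so that $(\cQ^D_{\tilde\eta})^* = \cS^D_\eta$, which is exactly the operator appearing in~\ref{SFinLqFPS}. The paper simply says to ``replace $\cQ^{\cD}_{\tilde\psi}$ by $\cQ^D_\eta$,'' but the adjoint of $\cQ^D_\eta$ is $\cS^D_{\eta^*}$, not $\cS^D_\eta$, so the substitution as literally stated is off by a conjugate. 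Since $\eta^*$ is again a nondegenerate element of $\widetilde\Psi(\R)$ and Proposition~\ref{Prop: AMcRLem5.2FPS} yields $\|u\|_{E^p_{D,\psi}}\eqsim\|\cQ^D_{\tilde\eta}u\|_{T^p}$ for any nondegenerate $\tilde\eta\in\widetilde\Psi(\R)$, your choice $\tilde\eta=\eta^*$ gives the same norm equivalence while aligning the duality pairing precisely with the hypothesis; this is the cleaner way to write the argument.
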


\begin{proof}
Following the proof of Theorem~\ref{Thm: H1injectivity}, let $(u_n)_n$ denote a Cauchy sequence in $E^p_{D,\psi}$ that converges to 0 in $L^p$. We need to show that $(u_n)_n$ converges to 0 in $E^p_{D,\psi}$. To see this, fix $\eta$ in $\widetilde{\Psi}(\R)$ satisfying~\ref{SFinLqFPS}. For all $n\in\N$, we have by \eqref{eq:QFPS} that
\begin{equation}\label{eq:embeqFPS}
\|u_n\|_{E^p_{D,\psi}} \eqsim \|\cQ^D_\eta u_n\|_{T^p}.
\end{equation}
We conclude by repeating the proof of Theorem~\ref{Thm: H1injectivity} with~\eqref{eq:embeq} replaced by~\eqref{eq:embeqFPS} and $\cQ^{\cD}_{\tilde\psi}$ replaced by $\cQ^D_\eta$.
\end{proof}

\begin{Rem}\label{Rem: Cc}
Note that \ref{SFinLqFPS} holds whenever $\cS^{D}_{\eta}(C^\infty_c(\cV_+)) \subseteq L^{q'}(\cV)$, where $C^\infty_c(\cV_+)$ denotes the space of smooth compactly supported sections in $T^2(\cV_+)$. This is because $C^\infty_c(\cV_+)$ is weak-star dense in $T^{p'}(\cV_+)$ for all $p\in[1,2]$. To see this, a mollification argument can be applied in combination with Remark~\ref{Rem: Tc}.
\end{Rem}

\subsection{Atomic Theory}\label{section: Atoms}
We obtain a characterisation of $H^1_{D,\psi}(\cV)$ in terms of the atoms from Definition~\ref{moleculedef} and the space $H^1_{D,\text{at}(N)}(\cV)$ from Definition~\ref{AMcRDef6.1}.

\begin{Thm}\label{AMcRThm6.2FPS}
Suppose that $M$ is a doubling metric measure space satisfying~\eqref{D}, that $D$ is a self-adjoint operator on $L^2(\cV)$, and $(e^{itD})_{t\in\R}$ has finite propagation speed. Also, assume that for some $\theta\in(0,\pi/2)$, $\beta>\kappa/2$ and nondegenerate $\psi\in\Psi_{\beta}(S^o_\theta)$, the completion $H^1_{D,\psi}(\cV)$ of $E^1_{D,\psi}(\cV)$ in $L^1(\cV)$ exists, and $H^1_{D,\psi}(\cV)\cap L^2(\cV)=E^1_{D,\psi}(\cV)$. It follows that if $N\in\N$ and $N>\kappa/2$, then $H^1_{D,\psi}(\cV)=H^1_{D,\text{mol}(N)}(\cV)=H^1_{D,\text{at}(N)}(\cV)$.
\end{Thm}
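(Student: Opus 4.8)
The strategy is to upgrade the molecular construction behind \cite[Lemma~6.7]{AMcR} to an \emph{atomic} one, exploiting that finite propagation speed permits a synthesising function whose Fourier transform is compactly supported. By Lemma~\ref{Lem: ODestRes}, $D$ satisfies \eqref{H1}--\eqref{H3} with $\omega=0$ and $m=1$, so Theorem~\ref{AMcRThm6.2} (applicable since $N>\kappa/2=\kappa/2m$) already gives $H^1_{D,\psi}(\cV)=H^1_{D,\text{mol}(N)}(\cV)$; moreover every $H^1_D$-atom of type $N$ is an $H^1_D$-molecule of type $N$, so $H^1_{D,\text{at}(N)}(\cV)\subseteq H^1_{D,\text{mol}(N)}(\cV)$ is automatic. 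Hence it suffices to prove $H^1_{D,\psi}(\cV)\subseteq H^1_{D,\text{at}(N)}(\cV)$.

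The crux is the claim that there is a nondegenerate $\eta\in\widetilde\Psi_N^\delta(\R)$ for which $\cS^D_\eta A$ is a fixed multiple of an $H^1_D$-atom of type $N$, uniformly over all $T^1$-atoms $A$. To prove it, I would choose $\eta\in\widetilde\Psi_N^\delta(\R)$ nondegenerate with $\delta$ so small that $c_D\delta<1$. The vanishing conditions on $\eta$ give a factorisation $\eta(z)=z^N\tilde\eta(z)$ with $\tilde\eta\in\widetilde\Theta^\delta(\R)$ (the Fourier transform converts $z^N$ into an $N$-fold derivative, and Schwartz decay forces $\widehat{\tilde\eta}$ to vanish off $[-\delta,\delta]$). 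Given a $T^1$-atom $A$ over a ball $B=B(x_B,r_B)$, set $a:=\cS^D_\eta A=\int_0^\infty\eta(sD)A_s\,\tfrac{ds}{s}$ and $b:=\int_0^\infty s^N\tilde\eta(sD)A_s\,\tfrac{ds}{s}$. Since $A_s$ is supported in $\{y:\rho(y,M\setminus B)\geq s\}$ and $c_D\delta<1$, Lemma~\ref{Lem: ODestSchwartz} forces $\ca_{M\setminus B}\,\eta(sD)A_s=\ca_{M\setminus B}\,\tilde\eta(sD)A_s=0$ (the relevant distance is $\geq s$, which strictly exceeds the admissible displacement $c_D\delta s$), so $a$ and $b$ are supported in $B$; closedness of $D^N$ and the identity $\eta(sD)=D^N\!\bigl(s^N\tilde\eta(sD)\bigr)$ then yield $b\in\Dom(D^N)$ with $D^N b=a$. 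Finally $\|a\|_2\lesssim\|A\|_{T^2}\leq\mu(B)^{-1/2}$ because $\cS^D_\eta\in\mathcal L(T^2,L^2)$, while Cauchy--Schwarz in $\tfrac{ds}{s}$ together with $\|A\|_{T^2}^2=\int_0^\infty\|A_s\|_2^2\,\tfrac{ds}{s}$ (Fubini) and the support constraint $\sppt A\subseteq T(B)$ give $\|b\|_2\lesssim r_B^{N}\|A\|_{T^2}\leq r_B^{N}\mu(B)^{-1/2}$; this proves the claim.

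Granting the claim, I would take $u\in H^1_{D,\psi}(\cV)$ and apply Proposition~\ref{Prop: AMcRLem5.2FPS} with this $\eta$ (valid since $N>\kappa/2$, the conclusion being independent of $\delta$) to write $u=\cS^D_\eta V$ with $V\in T^1$, $\|V\|_{T^1}\lesssim\|u\|_{H^1_{D,\psi}}$, and $\cS^D_\eta\in\mathcal L(T^1,H^1_{D,\psi})$. Theorem~\ref{maintentatomic} with $p=1$ provides $T^1$-atoms $A_j$ and $(\lambda_j)_j\in\ell^1$ with $\sum_j\lambda_j A_j=V$ in $T^1$ and $\|(\lambda_j)_j\|_{\ell^1}\eqsim\|V\|_{T^1}$; applying $\cS^D_\eta$ and then using $H^1_{D,\psi}(\cV)\subseteq L^1(\cV)$ gives $u=\sum_j\lambda_j\,\cS^D_\eta A_j$ in $L^1(\cV)$. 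By the claim each $\cS^D_\eta A_j$ is a uniform multiple of an $H^1_D$-atom of type $N$, so $u\in H^1_{D,\text{at}(N)}(\cV)$ with $\|u\|_{H^1_{D,\text{at}(N)}}\lesssim\|(\lambda_j)_j\|_{\ell^1}\lesssim\|u\|_{H^1_{D,\psi}}$. Combined with the first paragraph, this yields $H^1_{D,\psi}(\cV)=H^1_{D,\text{mol}(N)}(\cV)=H^1_{D,\text{at}(N)}(\cV)$.

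The main obstacle is the claim in the second paragraph, and specifically the interplay between the \emph{support localisation} — that $\cS^D_\eta A$ genuinely lives in $B$, which is precisely where finite propagation speed (via Lemma~\ref{Lem: ODestSchwartz}) is indispensable and where the purely off-diagonal construction of \cite{AMcR} cannot deliver an actual atom — and the \emph{size estimate} on the primitive $b$, which relies on the slices $A_s$ of a $T^1$-atom over $T(B)$ being concentrated at heights $s\lesssim r_B$. Once this construction is in hand, the factorisation $\eta=z^N\tilde\eta$, the identity $D^N b=a$, and the invocations of Proposition~\ref{Prop: AMcRLem5.2FPS} and Theorem~\ref{maintentatomic} are routine.
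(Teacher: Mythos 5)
Your proof is correct and takes essentially the same approach as the paper: reduce to showing $H^1_{D,\psi}\subseteq H^1_{D,\text{at}(N)}$ via Theorem~\ref{AMcRThm6.2}, and prove that $\cS^D_\eta A$ is (a fixed multiple of) an $H^1_D$-atom of type $N$ for every $T^1$-atom $A$ by using finite propagation speed to localise the support, Lemma~\ref{Lem: SchwartzMoments} to write $\eta(z)=z^N\tilde\eta(z)$ with $\tilde\eta\in\widetilde\Theta(\R)$, and a Cauchy--Schwarz argument for $\|b\|_2$. The only cosmetic deviation is that you shrink $\delta$ so that $c_D\delta<1$ to get the atom supported in $B$ itself, whereas the paper allows a general nondegenerate $\eta\in\widetilde\Psi_N(\R)$ and obtains support in a fixed dilate $\alpha B$; both yield a uniform multiple of an atom.
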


\begin{proof}
Suppose that $N\in\N$ and $N>\kappa/2$. Theorem~\ref{AMcRThm6.2} and Lemma~\ref{Lem: ODestRes} show that $H^1_{D,\psi}=H^1_{D,\text{mol}(N)} \supseteq H^1_{D,\text{at}(N)}$. It remains to prove that $H^1_{D,\psi}\subseteq H^1_{D,\text{at}(N)}$. To do this, fix a nondegenerate $\eta$ in $\widetilde{\Psi}_N(\R)$. We claim that there exists $c>0$ such that $c\,\cS^D_\eta A$ is an $H^1_D$-atom of type $N$ whenever $A$ is a $T^1$-atom. The claim allows us to prove that $H^1_{D,\psi}\subseteq H^1_{D,\text{at}(N)}$ by repeating the proof of Theorem~\ref{AMcRThm6.2} with $\tilde\psi$ replaced by $\eta$ and then relying on~\eqref{eq:HpEquivNormsFPS} instead of \eqref{eq:HpEquivNorms}.

To prove the claim, let $A$ denote a $T^1$-atom and let $B$ denote a ball in $M$ with radius $r(B)>0$ such that $A$ is supported in the tent $T(B)$ and $\|A\|_{T^2} \leq \mu(B)^{-1/2}$. Note that $A_t$ is supported in $B$ when $t\in(0,r(B)]$, and that $\eta_t(D)A_t=0$ when $t>r(B)$. The finite propagation speed, in particular~\eqref{mn}, then implies that there exists $\alpha>0$, which only depends on $\eta$ and $D$, such that $\eta_t(D)A_t$ is supported in $\alpha B$ for all $t>0$, hence $\cS^D_\eta A$ is supported in~$\alpha B$.

Now set $\tilde\eta(x)=x^{-N}\eta(x)$ for all $x\in\R\setminus\{0\}$, and $\tilde\eta(0) = \partial^N\eta(0)/N!$, which equals $\lim_{x\rightarrow 0} x^{-N}\eta(x)$. Lemma~\ref{Lem: SchwartzMoments} shows that $\tilde\eta\in\widetilde{\Theta}(\R)$, and so the properties of the $B^\infty(\R)$ functional calculus imply that  the putative atom $a:=\cS^D_\eta A$ has the form
\[
a=\cS^D_\eta A = D^N\left(\int_0^\infty t^N \tilde\eta_t(D)A_t \frac{d t}{t} \right) =: D^N b.
\]
It remains to verify that $a$ and $b$ above satisfy the atomic bounds in Definition~\ref{moleculedef}. We use the doubling property to obtain
\[
\|a\|_2=\|\cS^D_\eta A\|_2 \lesssim \|A\|_{T^2} \leq \mu(B)^{-1/2} \lesssim \mu(\alpha B)^{-1/2},
\]
and since $A_t=0$ for all $t>r(B)$, we also have
\[
\|b\|_2 = \bigg\|\!\int_0^\infty t^N \tilde\eta_t(D)A_t \frac{d t}{t} \bigg\|_2
\!=\|\cS^D_{\tilde\eta}(t^NA_t)\|_2
\lesssim r(B)^N \|A\|_{T^2}
\lesssim (\alpha r(B))^N \mu(\alpha B)^{-1/2}.
\]
Therefore, there exists $c>0$, which does not depend on $A$, such that $c\, \cS^D_\eta(A)$ is an $H^1_D$-atom of type $N$. This proves the claim and completes the proof.
\end{proof}

\begin{Rem}
The proof of Theorem~\ref{AMcRThm6.2FPS} shows that the same result holds when the $L^1(\cV)$ convergence required in Definition~\ref{AMcRDef6.1} is replaced with $H^1_{D,\psi}(\cV)$ convergence.
\end{Rem}

\subsection{The Embedding $H^p_{D}\subseteq L^p$ for Smooth Differential Operators}\label{section: ApplicationsII}
We now consider the case when $M$ is a complete Riemannian manifold, which is assumed to be smooth (infinitely differentiable) and connected, with geodesic distance $\rho$ and Riemannian measure $\mu$. The vector bundle $\cV$ is also assumed to be \textit{smooth}, which means that the complex vector bundle $\pi:\cV\rightarrow M$ is equipped with a Hermitian metric $\langle\cdot,\cdot\rangle_x$ that is infinitely differentiable with respect to $x\in M$. Let $\dim(M)$ denote the dimension of $M$ and let $\dim(\cV)$ denote the fibre dimension of $\cV$. We prove a general result for a class of first-order differential operators on $L^2(\cV)$. The results for the Hodge--Dirac operator in Theorem~\ref{Thm: HpinLpIntro} are deduced afterwards.

A smooth-coefficient, first-order, differential operator $D_c$ is a linear operator on $L^2(\cV)$ with domain $\Dom(D_c) = C_c^\infty(\cV)$ such that on any coordinate patch over which $\cV$ is trivial, there are smooth, matrix-valued ($\mathcal{L}(\C^{\dim(\cV)})$-valued) functions $(A_j)_{j=0,\ldots,\dim(M)}$ such that the action of $D_c$ on that coordinate patch is given by the Euclidean operator $\sum_{j=1}^{\dim(M)} A_j \partial_j + A_0$. For each $x\in M$ in such a coordinate patch and  each ${\xi\in T^*_xM}$ given by $\xi=\sum_{j=1}^{\dim(M)}  \xi_j dx^j$, the \textit{principal symbol} $\sigma_{D_c}(x,\xi)$ is the endomorphism on the fibre $\cV_x$ given by $\sum_{j=1}^{\dim(M)} A_j\xi_j$.  A full account of these standard facts, including a coordinate-free definition of the principal symbol, is in~\cite[Chapter~IV, Section~2]{Wells2008}. Moreover, for any $\eta\in C_c^\infty(M)$, the principal symbol is given by the commutator $[D_c,\eta I]u = D_c(\eta u) - \eta D_cu$, since
\[
\big(\sigma_{D_c}(x,d\eta(x))\big)\big(u(x)\big) = \left([D_c,\eta I]u\right)(x) \quad \forall x\in M,\ \forall u\in C^\infty_c(\cV),
\]
where $d$ is the exterior derivative. 

An operator $D_c$ is called \textit{symmetric} when $\langle D_cu,v\rangle = \langle u,D_cv\rangle$  for all $u, v \in C_c^\infty(\cV)$. A symmetric first-order operator has a skew-symmetric principal symbol. Chernoff  proved in \cite{Chernoff1973} that if the principal symbol of a symmetric, smooth-coefficient, first-order, differential operator satisfies a certain bound, then the operator is essentially self-adjoint and  generates a group with finite propagation speed (related results are discussed in Remark~\ref{Rem: FPS}). This allows us to prove the following result. 

\begin{Thm}\label{Thm: HpinLpD}
Suppose that $M$ is a complete Riemannian manifold satisfying~\eqref{D}\! and that $\cV$ is a smooth vector bundle over $M$. Let $D$ denote the unique self-adjoint extension of a symmetric, smooth-coefficient, first-order, differential operator $D_c$ on $L^2(\cV)$ for which there exists $c_D>0$ such that the principal symbol $\sigma_{D_c}$ satisfies
\begin{equation}\label{eq: symbol}
\|\sigma_{D_c}(x,\xi)\|_{\mathcal{L}(\cV_x)} \leq c_D |\xi|_{T^*_xM} \qquad\forall x\in M,\ \forall \xi\in T^*_xM.
\end{equation}
If $p\in[1,2]$, $\theta\in(0,\pi/2)$, $\beta>\kappa/2$ and $\psi\in\Psi_{\beta}(S^o_\theta)$ is nondegenerate, then the completion $H^p_{D,\psi}(\cV)$ of $E^p_{D,\psi}(\cV)$ in $L^p(\cV)$ exists, and $H^p_{D,\psi}(\cV)\cap L^2(\cV)=E^p_{D,\psi}(\cV)$. Moreover, if $N\in\N$ and $N>\kappa/2$, then $H^1_{D,\psi}(\cV)=H^1_{D,\text{mol}(N)}(\cV)=H^1_{D,\text{at}(N)}(\cV)$.
\end{Thm}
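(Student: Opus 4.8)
The plan is to place $D$ in the framework of Section~\ref{section: FPS} by way of Chernoff's theorem, and then to verify hypothesis~\ref{SFinLqFPS} with $q=1$ by combining the finite propagation speed of $(e^{itD})_{t\in\R}$ with a local Sobolev embedding. The symbol bound~\eqref{eq: symbol} is precisely the hypothesis under which Chernoff~\cite{Chernoff1973} shows that $D_c$ is essentially self-adjoint on $C_c^\infty(\cV)$, that $C_c^\infty(\cV)$ is a core for every power of the resulting self-adjoint operator $D$, and that $(e^{itD})_{t\in\R}$ has finite propagation speed with constant $c_D$. Hence $D$ is self-adjoint with finite propagation speed, so Lemma~\ref{Lem: ODestRes} applies and $D$ satisfies~\eqref{H1}--\eqref{H3} with $\omega=0$ and $m=1$; in particular $\kappa/2m=\kappa/2$, matching the exponents in the statement. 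It then remains only to check~\ref{SFinLqFPS} with $q=1$: since $q\le p$ for every $p\in[1,2]$, Theorem~\ref{Thm: H1injectivityFPS} then gives that the completion $H^p_{D,\psi}(\cV)$ of $E^p_{D,\psi}(\cV)$ in $L^p(\cV)$ exists with $H^p_{D,\psi}(\cV)\cap L^2(\cV)=E^p_{D,\psi}(\cV)$ for all $p\in[1,2]$, and, taking $p=1$, Theorem~\ref{AMcRThm6.2FPS} gives $H^1_{D,\psi}(\cV)=H^1_{D,\text{mol}(N)}(\cV)=H^1_{D,\text{at}(N)}(\cV)$ for $N\in\N$ with $N>\kappa/2$.

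To verify~\ref{SFinLqFPS} I would fix any nondegenerate $\eta\in\widetilde\Psi(\R)$ (such functions clearly exist), say with $\sppt\widehat\eta\subseteq[-\delta,\delta]$, and, by Remark~\ref{Rem: Cc}, it suffices to show $\cS_\eta^D F\in L^\infty(\cV)$ for every $F\in C_c^\infty(\cV_+)$. Given such an $F$, pick $0<a<b<\infty$ and a ball $B=B(x_0,r)$ with $\sppt F\subseteq B\times[a,b]$. For $s\in[a,b]$ the Fourier inversion identity~\eqref{FT} gives $\eta(sD)F_s=\tfrac{1}{2\pi s}\int_{|\xi|\le\delta s}\widehat\eta(\xi/s)\,e^{i\xi D}F_s\,d\xi$, so finite propagation speed forces $\eta(sD)F_s$, and hence $\cS_\eta^D F=\int_a^b\eta(sD)F_s\,\tfrac{ds}{s}$, to be supported in the compact set $K:=\overline{B(x_0,r+c_D\delta b)}$. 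It remains to bound $\cS_\eta^D F$ in $L^\infty$. Here I would use that $t\mapsto e^{itD}$ solves a symmetric hyperbolic system with smooth coefficients: by the standard energy estimates for such systems --- which are local in nature and therefore insensitive to the global geometry of $M$ --- for each $k\in\N$ there is $C_k>0$ with $\|e^{i\xi D}v\|_{W^{k,2}}\le C_k\|v\|_{W^{k,2}}$ for all $|\xi|\le\delta b$ and all $v\in C_c^\infty(\cV)$ supported in $B$, the Sobolev norm being computed over a fixed relatively compact neighbourhood of $K$ with respect to an auxiliary connection. Taking $k>\dim(M)/2$ and applying the Sobolev embedding $W^{k,2}\hookrightarrow L^\infty$ over this region, one obtains $\|\eta(sD)F_s\|_\infty\lesssim\|\widehat\eta\|_1\,\sup_{s'\in[a,b]}\|F_{s'}\|_{W^{k,2}}$ uniformly in $s\in[a,b]$, whence $\|\cS_\eta^D F\|_\infty\le\int_a^b\|\eta(sD)F_s\|_\infty\tfrac{ds}{s}<\infty$, as required.

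The main obstacle is the passage in the last step from compact support to membership in $L^\infty$. Since $D_c$ need not be elliptic, being in $\Dom(D^k)$ carries no Sobolev regularity, so one cannot simply exploit the functional-calculus identity $D^k\eta(sD)F_s=\eta(sD)D^kF_s$; instead one must genuinely use that $e^{itD}$ propagates Sobolev regularity as the solution operator of a symmetric hyperbolic system, and localise this (via finite propagation speed) to a relatively compact piece of $M$ where the classical local energy estimates and the Sobolev embedding theorem apply. The remaining bookkeeping --- the support computations and the elementary $L^\infty$ bound for $\cS_\eta^D F$ --- is routine, and the extraction of the molecular and atomic characterisations is a direct citation of Theorems~\ref{Thm: H1injectivityFPS} and~\ref{AMcRThm6.2FPS}.
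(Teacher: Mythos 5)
Your proposal takes essentially the same route as the paper: invoke Chernoff for essential self-adjointness and finite propagation speed, pass through Lemma~\ref{Lem: ODestRes} and Theorems~\ref{Thm: H1injectivityFPS}, \ref{AMcRThm6.2FPS}, and verify \ref{SFinLqFPS} with $q=1$ via Remark~\ref{Rem: Cc} by showing $\cS_\eta^D F\in L^\infty(\cV)$ for $F\in C_c^\infty(\cV_+)$, combining the compact-support consequence of~\eqref{FT} with a local energy estimate for the symmetric hyperbolic system $\partial_t v=iDv$ and the Sobolev embedding $W^{k,2}\hookrightarrow L^\infty$ on a fixed relatively compact region. The only cosmetic difference is the Sobolev threshold: the paper cites Wells with $k>1+\dim(M)/2$, while you state $k>\dim(M)/2$; adjusting to the paper's threshold costs nothing, and otherwise the argument is identical, including your correct identification that the energy estimate, not the functional calculus, carries the Sobolev regularity since $D_c$ need not be elliptic.
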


\begin{proof}
The proof that $D_c$ is essentially self-adjoint on $L^2(\cV)$ is in~\cite[Theorem 2.2]{Chernoff1973}. The results of Chernoff~\cite[Theorem 1.3 and Corollary 1.4]{Chernoff1973} also show that the group $(e^{itD})_{t\in\R}$ has finite propagation speed $c_D$. Therefore, by Theorems~\ref{Thm: H1injectivityFPS} and~\ref{AMcRThm6.2FPS}, it suffices to prove that~\ref{SFinLqFPS} holds with $q=1$. 

First, we require a known estimate for the Sobolev spaces $W^{k,2}(\cV)$, where $k\in\N$. If $k>1+\dim(M)/2$ and $\cB$ is a ball in $M$, then there exists $C_{\cB}>0$ such that, for all $u\in W^{k,2}(\cV)$ with $\sppt(u)\subset \cB$, then 
\begin{equation}\label{sobolev}
\|u\|_{\infty} \leq C_{\cB} \|u\|_{W^{k,2}(\cV)}.
\end{equation}
This Sobolev embedding theorem can be found in \cite[Chapter~IV, Proposition~1.1]{Wells2008}.

Second, we require a known energy estimate. If $k\in\N$, $T>0$, and $\cB$ is a ball in $M$, then there exists $C_{T,\cB}>0$ such that, for all $u \in C^\infty_c(\cV)$ with $\sppt(u) \subset \cB$, then 
\begin{equation}\label{energyest}
\|e^{itD}u\|_{W^{k,2}(\cV)} \leq C_{T,\cB} \|u\|_{W^{k,2}(\cV)} \qquad \forall t\in[-T,T].
\end{equation}
This can be proved by the methods in \cite[Chapter IV, Section 2]{TaylorPsDO}, since $v(t)=e^{itD}u$ solves the initial value problem $\frac{dv}{dt}=iDv$ with $v(0)=u$.

Now choose a nondegenerate $\eta$ in $\widetilde{\Psi}(\R)$ and $\delta>0$ such that $\sppt \widehat{\eta}\subseteq [-\delta,\delta]$. Fix $k\in\N$ such that $k>1+\dim(M)/2$ and set $\alpha=1+c_D\delta$. For each $F \in C^\infty_c(\cV_+)$, there is a ball $B \subseteq M$ and $r>1$ such that $\sppt (F) \subseteq B \times [1/r,r]$. It follows that $\sppt (e^{istD} F_t) \subseteq (1+c_D|s|t/r)B \subseteq \alpha B$ for all $s\in[-\delta,\delta]$ and $t\in[1/r,r]$. Hence
\begin{align*}
\|\cS_\eta^D F\|_\infty &= \left\| \int_{1/r}^r \left(\frac{1}{2\pi} \int_{-\delta}^\delta \widehat{\eta}(s) e^{istD}F_t\, ds\right) \frac{dt}{t} \right\|_\infty \\
&\lesssim \int_{1/r}^r \int_{-\delta}^\delta \|e^{istD}F_t\|_\infty \, ds\, dt \\
&\lesssim \int_{1/r}^r \int_{-\delta}^\delta \|e^{istD}F_t\|_{W^{k,2}(\cV)} \, ds\, dt \\
&\lesssim \int_{1/r}^r \|F_t\|_{W^{k,2}(\cV)} \, dt \\
&<\infty,
\end{align*}  
where the first line uses~\eqref{FT}, the third line uses~\eqref{sobolev} with $\cB = \alpha B$, the fourth line uses~\eqref{energyest} with $\cB=B$, and the fifth line uses the continuity of $F$ in $C^\infty_c(\cV_+)$. This shows that $\cS_\eta^D (C^\infty_c(\cV_+)) \subseteq L^\infty(\cV)$, so Remark~\ref{Rem: Cc} implies that \ref{SFinLqFPS} holds with $q=1$. This completes the proof.
\end{proof}

\begin{Rem}\label{Rem: FPS}
McIntosh and Morris~\cite[Theorem~1.1]{McM} proved recently that any $C_0$-group $(e^{itD})_{t\in\R}$ generated by a first-order system $D$ satisfying~\eqref{eq: symbol} has finite propagation speed. In particular, finite propagation speed for such groups is not restricted to smooth-coefficient nor self-adjoint systems.
\end{Rem}

\begin{Rem}
The proof of Theorem~\ref{Thm: HpinLpD} extends immediately to an analogous class of first-order pseudodifferential operators but we shall not pursue this matter here.
\end{Rem}

We now prove Theorem~\ref{Thm: HpinLpIntro}, which fills a gap in the theory of Hardy spaces of differential forms developed by Auscher, McIntosh and Russ~\cite{AMcR}.

\begin{proof}[Proof of Theorem~\ref{Thm: HpinLpIntro}]
Let $M$ denote a doubling, complete Riemannian manifold. The bundle $\wedge T^*M=\oplus_{k=0}^{\dim(M)}\wedge^k T^*M$, where $\wedge^k T^*M$ denotes the $k\text{th}$ exterior power of the cotangent bundle $T^*M$, is defined with the Hermitian metric induced by the Riemannian metric. The Hodge--Dirac operator $D=d+d^*$ is defined initially on $C^\infty_c(\wedge T^*M)$, where $d$ and $d^*$ denote the exterior derivative and its adjoint. This is a symmetric, smooth-coefficient, first-order, differential operator on $L^2(\wedge T^*M)$ with principal symbol
\[
\sigma_D(x,\xi)\zeta = \xi\ext \zeta - \xi\lint \zeta \qquad \forall x\in M, \ \forall \xi\in T^*_xM, \ \forall \zeta\in\wedge T^*_xM,
\]
where $\ext$ and $\lint$ denote the exterior and (left) interior products on $\wedge T_x^*M$. These properties of the Hodge--Dirac operator are well known, and in particular, we have
\[
|\sigma_D(x,\xi)\zeta|_{\wedge T^*_xM}
= |\xi|_{T^*_xM}|\zeta|_{\wedge T^*_xM}\qquad \forall x\in M, \ \forall \xi\in T^*_xM, \ \forall \zeta\in\wedge T^*_xM,
\]
so the hypotheses of Theorem~\ref{Thm: HpinLpD} hold, and its conclusions imply Theorem~\ref{Thm: HpinLpIntro}.
\end{proof}

\section{The Embedding $H^p_L\subseteq L^p$ for Nonnegative Self-Adjoint Operators}\label{section: HLMMY}
We now combine the theory of the previous two sections to prove Theorem~\ref{Thm: HpinLPforHLMMY}. The atomic characterisation in Theorem~\ref{Thm: HpinLpIntroL} is then an immediate corollary. A new proof of Theorem~\ref{Thm: HpinLpIntroL} for smooth coefficient operators is also presented.

We return to the context of a vector bundle $\cV$ over a doubling metric measure space $M$. A nonnegative self-adjoint operator $L:\Dom(L)\subseteq L^2(\cV)\rightarrow L^2(\cV)$ is said to satisfy \textit{Davies--Gaffney estimates} when there exist constants $C,c>0$ such that
\begin{equation}\label{DG}
 \|\ca_E e^{-tL} \ca_F u \|_2 \leq C e^{-c\rho(E,F)^2/t} \|u\|_2
\end{equation}
for all $t>0$, all $u\in L^2(\cV)$ and all measurable sets $E,F\subseteq M$, where $(e^{-tL})_{t>0}$ is the analytic semigroup generated by $-L$. The following builds on the theory of Hardy spaces developed for such operators by Hofmann, Lu, Mitrea, Mitrea and Yan~\cite{HLMMY}.

\begin{proof}[Proof of Theorem~\ref{Thm: HpinLPforHLMMY}]
Since $L$ is self-adjoint, it satisfies \eqref{H1} and \eqref{H2} with $\omega=0$, $C_\theta=1/\sin\theta$ and $c_\theta=1$. We now prove that $L$ satisfies~\eqref{H3} with $m=2$. Let $E$ and $F$ denote measurable subsets of $M$. Since $L$ is nonnegative and self-adjoint, the Davies--Gaffney estimate \eqref{DG} is equivalent to the property that the cosine group $\cos(t\sqrt{L}) := \frac{1}{2}(e^{it\sqrt{L}} + e^{-it\sqrt{L}})$ has finite propagation speed (see~\cite[Theorem~2]{Sik} and \cite[Theorem~3.4]{CoulhonSikora}), where $(e^{it\sqrt{L}})_{t\in\R}$ is the $C_0$-group generated by the skew-adjoint operator $i\sqrt{L}$. Therefore, there exists $c_L>0$ such that $\ca_E \cos(t\sqrt{L}) \ca_F=0$ whenever $\rho(E,F)> c_L |t|$. For all $z\in\C$ with $\im (\pm z) > 0$, we use the integral representation $(zI-L)^{-1}= \frac{\pm 1}{i\sqrt{z}}\int_0^\infty e^{\pm i\sqrt{z}t}\cos(t\sqrt{L})\, dt$ (see \cite[Example~3.14.15]{ArendtBattyHieberNeubrander2011}) to obtain
\begin{align*}
\|\ca_E (zI-L)^{-1} \ca_F\| 
&\leq \frac{1}{|z|^{1/2}} \int_{\rho(E,F)/{c_L}}^\infty |e^{\pm i\sqrt{z}t}| \|\ca_E  \cos(t\sqrt{L}) \ca_F\|\, d t \\
&\leq \frac{1}{|z|^{1/2}}\int_{{\rho(E,F)}/{{c_L}}}^\infty e^{-(\im(\pm \sqrt{z}))t} \, dt.
\end{align*}
It is understood here that $\sqrt{z}= |z|^{1/2} e^{i\Arg(z)/2}$ with $\Arg(z)\in(-\pi,\pi]$, so then $\im(\sqrt{z}) = |z|^{1/2}\sin(\Arg(z)/2)$, and for each $\theta\in(0,\pi/2)$, it follows that
\[
\|\ca_E (zI-L)^{-1} \ca_F\| 
\leq \frac{C_{\theta/2}}{|z|} \exp\left(-\frac{\rho(E,F)|z|^{1/2}}{{c_L} C_{\theta/2}}\right) \qquad \forall z\in \C\setminus S_{\theta},
\]
which implies \eqref{H3} with $m=2$.

We have now shown that $L$ satisfies \eqref{H1}--\eqref{H3} with $\omega=0$ and $m=2$, and since $L$ satisfies \eqref{GnewSA}, hypothesis \ref{SFinLq} holds with $q=1$ by \eqref{H4est}. Therefore, except for the atomic characterisation, Theorems~\ref{Thm: H1injectivity} and~\ref{AMcRThm6.2} complete the proof.

It thus remains to prove that $H^1_{L,\psi}\subseteq H^1_{L,\text{at}(N)}$ when $\psi\in\Psi_\beta(S^o_\theta)$ and $N>\kappa/4$. Let $\tilde\psi(z)=ze^{-z}$ on $S^o_{\theta}$ and fix a nondegenerate \textit{even} function $\eta$ in $\widetilde{\Psi}_{2N}^\delta(\R)$ such that
\[
\int_0^\infty \eta(tz) \tilde\psi(t^2z^2) \, \frac{dt}{t} = 1 \qquad \forall z\in S^o_{\theta/2}.
\]
For example, choose any nondegenerate, even, real-valued function $\varphi\in C_c^\infty(\R)$ supported on $[-\delta/2,\delta/2]$ and let $\eta (x) = \alpha\,|x^{N}\widehat\varphi(x)|^2$ for all $x\in\R$, where $\alpha$ is the normalizing constant defined by $\alpha \int_0^\infty t^{2N}\widehat\varphi(t)^2 t^2 e^{-t^2}\frac{dt}{t} =1$.

Applying Proposition~\ref{Prop: Calderon} with $D=\sqrt{L}$, we obtain
\[
\cS^{\sqrt{L}}_{\eta}\cQ^{L}_{\tilde\psi}u := \int_0^\infty \eta(t\sqrt{L})\tilde\psi(t^2L)u \, \frac{dt}{t}
= u \qquad \forall u \in \overline{\Ran(L)}.
\]
The operator $\cQ^{L}_{\tilde\psi}$ has an extension $\cQ^{L}_{\tilde\psi} \in \mathcal{L}(H^1_{L,\psi},T^1)$ by \eqref{eq:HpEquivNorms}, since we have already established the embedding $H^1_{L,\psi}\subseteq L^1$ and that $H^1_{L,\psi}\cap L^2 = E^1_{D,\psi}$. It is also the case that $\cS^{\sqrt{L}}_{\eta}$ has an extension $\cS^{\sqrt{L}}_{\eta}\in\mathcal{L}(T^1,H^1_{L,\psi})$, but to prove this we must modify the theory in Section~\ref{section: FPS} to incorporate the finite propagation of the cosine group $\cos(t\sqrt{L})$. To this end, the fact that $\eta$ is an even function allows us to write 
\[
\eta_t(\sqrt{L})u= \frac{1}{\pi} \int_{0}^\infty \widehat{\eta_t}(s) \cos(s\sqrt{L})u\, d s \quad \forall t>0, \ \forall u \in L^2(\cV).
\]
We then follow the proof of Lemma~\ref{Lem: ODestSchwartz}, but instead use the finite propagation of the cosine group, to deduce that 
\begin{equation}\label{eq: Lmn}
\|\ca_E \eta_t(\sqrt{L}) \ca_F\| \leq \tfrac{1}{\pi} \|\widehat{\eta}\|_{\infty} \max\left\{\delta-\frac{\rho(E,F)}{c_Lt},0\right\} \qquad \forall t>0,\ \forall E, F\subseteq M.
\end{equation}
The extension $\cS^{\sqrt{L}}_{\eta}\in\mathcal{L}(T^1,H^1_{L,\psi})$ is then obtained as in Propositions~\ref{Prop: MainEst} and~\ref{Prop: AMcRLem5.2FPS}.

Now let $u\in H^1_{L,\psi}$. It follows from above that $u=\cS^{\sqrt{L}}_{\eta} U$, where $U:=\cQ^{L}_{\tilde\psi}u \in T^1$. Therefore, in order to show that $u\in H^1_{L,\text{at}(N)}$, it suffices to show that $\cS^{\sqrt{L}}_\eta A$ is an $H^1_L$-atom of type $N$ whenever $A$ is a $T^1$-atom (see the reasoning in the proof of the atomic characterisation in Theorem~\ref{AMcRThm6.2FPS}). To do this, note that when $A$ is supported in the tent $T(B)$ over a ball $B\subseteq M$, then \eqref{eq: Lmn} implies that $\eta(t\sqrt{L})A_t$ is supported in $\alpha B$ for all $t>0$, where $\alpha>0$ only depends on $\eta$ and $L$. Following the proof of Theorem~\ref{AMcRThm6.2FPS}, we write $a:=\cS^{\sqrt{L}}_\eta A = (\sqrt{L})^{2N}(\int_0^\infty t^{2N} \tilde\eta(t\sqrt{L})A_t \frac{d t}{t}) =: L^N b$ for a suitable $\tilde\eta\in\widetilde{\Theta}(\R)$, and then verify that $a$ and $b$ satisfy the atomic bounds in Definition~\ref{moleculedef}. This proves that $H^1_{L,\psi}\subseteq H^1_{L,\text{at}(N)}$, which completes the proof.
\end{proof}

We conclude by presenting a new proof of the results in Theorem~\ref{Thm: HpinLpIntroL} that does not rely explicitly on the ultracontractivity estimate \eqref{Gnew} but instead requires that $A$ is self-adjoint with smooth coeffecients.

\begin{proof}[Proof of Theorem~\ref{Thm: HpinLpIntroL} when $A$ is self-adjoint with smooth coeffecients]\label{Eg: dAg}
Let  $M=\R^n$ and consider $L = -\Div A \nabla$ on $L^2(\cV) = L^2(\R^n)$, where $A \in L^\infty(\R^n,\cL(\C^n))$ has $C^\infty(\R^n)$ coefficients and is elliptic in the sense that there exists $\lambda>0$ such that 
\[
\langle A(x)\zeta,\zeta\rangle_{\C^n}\geq\lambda|\zeta|^2 \quad \forall \zeta\in\C^n,\ \forall x\in\R^n.
\]
This ellipticity condition, which is stronger than~\eqref{ellip}, implies that the matrix $A(x)$ is strictly positive and Hermitian. We proceed by introducing a first-order system~$D$, a multiplication operator $B$, and a vector bundle $\cV_B$, such that $L$ is a component of~$(BD)^2$, and $BD$ satisfies the hypotheses of Theorem~\ref{Thm: HpinLpD} on $L^2(\cV_B)$.

Let $D_c:C_c^\infty(\R^n,\C^{1+n}) \!\rightarrow\! C_c^\infty(\R^n,\C^{1+n})$ denote the symmetric, smooth-coefficient, first-order, differential operator on $L^2(\R^n,\C^{1+n})$ defined by
\[
D_c=\left[\begin{array}{cc} 0&-\Div \\ \nabla &0 \end{array}\right]:
\begin{array}{c}C_c^\infty(\R^n)\\ \oplus\\ C_c^\infty(\R^n,\C^{n})\end{array}
\rightarrow
\begin{array}{c}C_c^\infty(\R^n)\\ \oplus\\ C_c^\infty(\R^n,\C^{n})\end{array},
\]
where $\nabla f = (\partial_1f,\ldots,\partial_nf)$ and $\Div(u_1,\ldots,u_n) = \sum_{j=1}^n\partial_ju_j$. The principal symbol
\[
\sigma_{D_c}(x,\xi) = \left[\begin{array}{cc}0 &-\xi^T \\  \xi &0 \end{array}\right]
\qquad \forall x\in\R^n,\ \forall \xi \in \C^{n}
\]
satisfies \eqref{eq: symbol}, so the unique self-adjoint extension of $D_c$ is the operator
\[
D=\left[\begin{array}{cc}
0&-\Div \\ \nabla &0 \end{array}\right]:\begin{array}{c}W^{1,2}(\R^n)\\ \oplus\\ \Dom(\Div)\end{array}\subseteq \begin{array}{c}L^2(\R^n)\\ \oplus\\ L^2(\R^n,\C^n)\end{array}\to \begin{array}{c}L^2(\R^n)\\ \oplus\\ L^2(\R^n,\C^n),\end{array}
\]
where $\nabla$ denotes the gradient extended to $W^{1,2}(\R^n)$ and $\Div:=-\nabla^*$.

Let $B(x)= \left[\begin{array}{cc} 1&0\\0& A(x) \end{array}\right]$, so $B \in L^\infty(\R^n,\cL(\C^{1+n})) \cap C^\infty(\R^n,\cL(\C^{1+n}))$ and
\begin{equation}\label{BDL}
BD=  \left[\begin{array}{cc}0&-\Div \\ A\nabla &0 \end{array}\right]
\qquad \text{and} \qquad 
(BD)^2= \left[\begin{array}{cc}L &0 \\  0 &\tilde{L}\end{array}\right],
\end{equation}
where $\tilde{L} := -A\nabla \Div$.

Let $\cV_B$ denote the trivial bundle over $\R^n$ that has $\C^{1+n}$-valued sections and the smooth Hermitian metric $\langle \xi,\zeta\rangle_{(\cV_B)_x} := \langle B(x)^{-1}\xi, \zeta\rangle_{\C^{1+n}}$ for $x\in\R^n$ and $\xi,\zeta \in \C^{1+n}$ (since $B(x)$ is strictly positive and Hermitian, $B(x)^{-1}$ and $B(x)^{-1/2}$ are Hermitian; also $B^{-1}, B^{-1/2} \in L^\infty(\R^n,\cL(\C^{1+n})) \cap C^\infty(\R^n,\cL(\C^{1+n}))$). For $p\in[1,2]$, the space $L^p(\cV_B)$ is then the set $L^p(\R^n,\C^{1+n})$ together with the norm
\[
\|u\|_{L^p(\cV_B)} := \left(\int_{\R^n} |B(x)^{-1/2}u(x)|_{\C^{1+n}}^p\, dx\right)^{1/p} \eqsim \|u\|_{L^p(\R^n,\C^{1+n})} \quad \forall u\in L^p(\R^n,\C^{1+n}). 
\]

We now verify the hypotheses of Theorem~\ref{Thm: HpinLpD} for the system $BD_c$ on $L^2(\cV_B)$. The inner product on $L^2(\cV_B)$ is given by $\langle B^{-1}u, v \rangle_{L^2(\R^n,\C^{1+n})}$, so $BD_c$ is symmetric on $L^2(\cV_B)$. The principal symbol satisfies $\sigma_{BD_c}(x,\xi) = B(x)\sigma_{D_c}(x,\xi)$ and
\[
|\sigma_{BD_c}(x,\xi)\zeta|_{(\cV_B)_x} = |B(x)^{1/2}\sigma_{D_c}(x,\xi)\zeta|_{\C^n} \leq \|B\|_\infty^{1/2} |\xi|_{\C^n}|\zeta|_{\C^{1+n}} \leq \|B\|_\infty |\xi|_{\C^n}|\zeta|_{(\cV_B)_x}
\]
for all $x\in\R^n$, $\xi \in \C^{n}$ and $\zeta \in \C^{1+n}$, so $BD_c$ satisfies \eqref{eq: symbol} on $\cV_B$, as required.

We can now apply Theorem~\ref{Thm: HpinLpD}. In particular, consider $p\in[1,2]$, $\theta\in(0,\pi/2)$ and $\beta>n/4$. Fix a nondegenerate $\psi\in\Psi_{\beta}(S^o_\theta)$, and let $\tilde{\psi}(z)=\psi(z^2)$ on $S^o_{\theta/2}$ (thus $\tilde\psi\in\Psi_{2\beta}(S^o_{\theta/2})$ and $2\beta>n/2$). The completion $H^p_{BD,\tilde\psi}(\cV_B)$ of $E^p_{BD,\tilde\psi}(\cV_B)$ in $L^p(\cV_B)$ then exists, and $H^p_{BD,\tilde\psi}(\cV_B)\cap L^2(\cV_B) = E^p_{BD,\tilde\psi}(\cV_B)$, by Theorem~\ref{Thm: HpinLpD}.

We now use the fact that $L$ is a component of $(BD)^2$ to complete the proof. Note that $L$ satisfies \eqref{H1}--\eqref{H3} with $m=2$ (see Section~\ref{section: ApplicationsI}), so $E^p_{L,\psi}(\R^n)$ is defined with $m=2$, whereas $E^p_{BD,\tilde\psi}(\cV_B)$ is defined with $m=1$ (see Lemma~\ref{Lem: ODestRes}). Let $\varphi(z)=ze^{-z}$ on $S^o_{\theta}$, and let $\tilde\varphi(z)=\varphi(z^2)$ on $S^o_{\theta/2}$. We use \eqref{BDL} to write
\[
\tilde{\varphi}(tBD) = t^2(BD)^2e^{-t^2(BD)^2} 
=\left[\begin{array}{cc} t^2Le^{-t^2L} &0 \\ 0 &t^2\tilde{L}e^{-t^2\tilde{L}} \end{array}\right]
=\left[\begin{array}{cc}\varphi(t^2L) &0 \\ 0 &\varphi(t^2\tilde{L}) \end{array}\right]
\]
and then apply \eqref{eq:AMcR5.2Norms} to obtain
\begin{align*}
\|u\|_{E^p_{L,\psi}(\R^n)} \eqsim \|\varphi(t^2L)u\|_{T^p(\R^{n+1}_+)} 
\eqsim \left\|\tilde\varphi(tBD) \left[\begin{array}{c} u \\ 0 \end{array}\right]\right\|_{T^p((\cV_B)_+)} 
\eqsim \left\|\left[\begin{array}{c} u \\ 0 \end{array}\right]\right\|_{E^p_{BD,\tilde\psi}(\cV_B)}
\end{align*}
for all $u\in E^p_{L,\psi}(\R^n)$. The equivalence $L^p(\cV_B)\eqsim L^p(\R^n)$ and the results above for $H^p_{BD,\tilde\psi}(\cV_B)$ then imply that the completion $H^p_{L,\psi}(\R^n)$ of $E^p_{L,\psi}(\R^n)$ in $L^p(\R^n)$ exists, and $H^p_{L,\psi}(\R^n)\cap L^2(\R^n) = E^p_{L,\psi}(\R^n)$. Theorem~\ref{AMcRThm6.2} then provides the molecular characterisation of $H^1_{L,\psi}(\R^n)$. Moreover, if $N\in\N$ and $N>n/4$, then $H^1_{BD,\tilde\psi}(\cV_B)=H^1_{BD,\text{at}(2N)}(\cV_B)$ by Theorem~\ref{AMcRThm6.2FPS}, which implies that $H^1_{L,\psi}(\R^n)=H^1_{L,\text{at}(N)}(\R^n)$, since when $(a,\tilde{a})=(BD)^{2N}(b,\tilde{b})$ in $L^2(\R^n)\oplus L^2(\R^n,\C^n)$ is an $H^1_{BD}(\cV_B)$-atom of type $2N$, then $a=L^N b$ is an $H^1_{L}(\R^n)$-atom of type $N$ by \eqref{BDL}. This completes the proof.
\end{proof}

\section{Appendix: Off-Diagonal Estimates}\label{section:ODE}
This section contains technical estimates used to prove Propositions~\ref{Prop: MainEst} and~\ref{Prop: AMcRLem5.2FPS}. We begin with the following lemma, which allows us to manipulate $\widetilde\Psi(\R)$ class functions in a manner analogous to $\Psi(S^o_\theta)$ class functions.

\begin{Lem}\label{Lem: SchwartzMoments}
Suppose that $N\in\N$. The following hold.
\begin{enumerate}
\item For $n\in\N$ and $\varphi\in\widetilde{\Theta}(\R)$, the function ${\tilde\varphi}(x)\!:=x^n \varphi(x) \text{ for all } x \in\R$, is in $\widetilde{\Psi}_{n}(\R)$.\\
Moreover, if $\varphi\in\widetilde{\Psi}_N(\R)$, then ${\tilde\varphi}\in\widetilde{\Psi}_{N+n}(\R)$.
\item For $m\in\{1,\ldots,N\}$ and $\eta\in\widetilde{\Psi}_N(\R)$, the function
${\tilde\eta}(x):=x^{-m} \eta(x)$ for all $x\in\R\setminus\{0\}$, with ${\tilde\eta}(0):=\lim_{x\rightarrow 0} x^{-m} \eta(x) = \partial^m\eta(0)/m!$, is in $\widetilde{\Theta}(\R)$.\\
Moreover, if $m\in\{1,\ldots,N-1\}$, then ${\tilde\eta}\in\widetilde{\Psi}_{N-m}(\R)$ (and so ${\tilde\eta}(0)=0$).
\end{enumerate}
\end{Lem}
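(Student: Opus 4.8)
The plan is to treat the two parts separately, handling (1) by a direct computation on the ``time side'' and (2) by passing to the Fourier transform.

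For part (1), since $\varphi\in\cS(\R)$ and the Schwartz class is stable under multiplication by polynomials, $\tilde\varphi(x)=x^n\varphi(x)$ is again Schwartz; its Fourier transform $\widehat{\tilde\varphi}$ is a constant multiple of $(\widehat\varphi)^{(n)}$, and since $\widehat\varphi$ is smooth and vanishes on the open set $\R\setminus[-\delta,\delta]$, so does $(\widehat\varphi)^{(n)}$, whence $\sppt\widehat{\tilde\varphi}\subseteq[-\delta,\delta]$ and $\tilde\varphi\in\widetilde\Theta^\delta(\R)$. The vanishing conditions at the origin then follow from the Leibniz rule: $\tilde\varphi^{(j)}(0)=\sum_{i=0}^{j}\binom{j}{i}(x^n)^{(i)}\big|_{x=0}\,\varphi^{(j-i)}(0)$, and $(x^n)^{(i)}\big|_{x=0}$ is nonzero only for $i=n$. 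For $j\le n-1$ every term has $i<n$, so $\tilde\varphi^{(j)}(0)=0$ and $\tilde\varphi\in\widetilde\Psi_n^\delta(\R)$; if moreover $\varphi\in\widetilde\Psi_N^\delta(\R)$, then for $j\le N+n-1$ the only surviving term ($i=n$) carries the factor $\varphi^{(j-n)}(0)$ with $j-n\le N-1$, which vanishes, so $\tilde\varphi\in\widetilde\Psi_{N+n}^\delta(\R)$.

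For part (2), the first step is to show $\tilde\eta\in C^\infty(\R)$. Since $m\le N$, the function $\eta$ vanishes to order $m$ at $0$, so an iterated application of Hadamard's lemma (equivalently, Taylor's formula with integral remainder) shows that $x\mapsto x^{-m}\eta(x)$ extends smoothly across $0$ with value $\eta^{(m)}(0)/m!$, which is the chosen definition of $\tilde\eta(0)$. For $|x|\ge 1$ one has $\tilde\eta(x)=\eta(x)/x^m$, a product of a Schwartz function with a smooth function whose derivatives are all bounded, so $\tilde\eta\in\cS(\R)$. It then remains to locate $\sppt\widehat{\tilde\eta}$: from $x^m\tilde\eta(x)=\eta(x)$ and the Schwartz property, $(\widehat{\tilde\eta})^{(m)}$ is a constant multiple of $\widehat\eta$, hence supported in $[-\delta,\delta]$. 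Therefore on each of the two unbounded components $(\delta,\infty)$ and $(-\infty,-\delta)$ of $\R\setminus[-\delta,\delta]$, the function $\widehat{\tilde\eta}$ agrees with a polynomial of degree at most $m-1$; since $\widehat{\tilde\eta}\in\cS(\R)$ decays at $\pm\infty$, these polynomials vanish, so $\sppt\widehat{\tilde\eta}\subseteq[-\delta,\delta]$ and $\tilde\eta\in\widetilde\Theta^\delta(\R)$.

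For the ``moreover'' in part (2), when $m\le N-1$ I would match Taylor coefficients at $0$: writing $\eta(x)=x^m\tilde\eta(x)$ gives $\tilde\eta^{(l)}(0)=\frac{l!}{(l+m)!}\eta^{(l+m)}(0)$, which vanishes for $l\le N-m-1$ because then $l+m\le N-1$; hence $\tilde\eta\in\widetilde\Psi_{N-m}^\delta(\R)$, and in particular $\tilde\eta(0)=0$. I expect part (1) and this last point to be routine; the main obstacle is the first claim of part (2), where one must simultaneously guarantee $C^\infty$-regularity of $\tilde\eta$ at the origin (where the vanishing hypothesis on $\eta$ is used) and compact support of $\widehat{\tilde\eta}$ — the clean way to reconcile these being to establish $\tilde\eta\in\cS(\R)$ first and only then run the ``polynomial-at-infinity'' argument on $\widehat{\tilde\eta}$.
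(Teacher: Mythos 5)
Your proof of part (1) is correct and essentially matches the paper's argument (Leibniz rule on $x^n\varphi$, differentiation on the Fourier side). For part (2), however, you take a genuinely different route to the compact support of $\widehat{\tilde\eta}$, and it is worth noting the contrast. The paper first establishes $\tilde\eta\in\cS(\R)$ via Paley--Wiener analyticity and a power series, and then proves $\sppt\widehat{\tilde\eta}\subseteq[-\delta,\delta]$ by an explicit finite induction on the iterated antiderivatives of $\widehat\eta$, verifying at each step that the boundary terms vanish because of the moment conditions $\int w^j\widehat\eta(w)\,dw=\partial^j\eta(0)=0$. You instead establish $\tilde\eta\in\cS(\R)$ via Hadamard's lemma (an equally valid substitute for the paper's analyticity argument), then observe that $(\widehat{\tilde\eta})^{(m)}$ is a constant multiple of $\widehat\eta$, hence vanishes outside $[-\delta,\delta]$; therefore $\widehat{\tilde\eta}$ agrees with a polynomial of degree $\leq m-1$ on each unbounded component of $\R\setminus[-\delta,\delta]$, and the Schwartz decay of $\widehat{\tilde\eta}$ forces those polynomials to vanish. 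This is shorter and conceptually cleaner: it concentrates all the use of the vanishing hypothesis into the single step ``smooth division by $x^m$,'' and the compact support then falls out by soft reasoning rather than by the paper's term-by-term computation. Both arguments are correct and rely on the same hypotheses; yours is the more efficient organization.
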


\begin{proof}
Suppose that $n\in\N$ and $\varphi\in\widetilde{\Theta}(\R)$. The function~${\tilde\varphi}$ defined in~(1) belongs to $\cS(\R)$ because $\varphi\in\cS(\R)$. The Fourier transform $\widehat{{\tilde\varphi}}$ is compactly supported because $\widehat{\varphi}$ is compactly supported and $\widehat{{\tilde\varphi}} = \partial^n\widehat{\varphi}$. For each $k\in\N$, there exist constants $c_{k,0},c_{k,1},\ldots, c_{k,k}$ such that
\[
\partial^k {\tilde\varphi}(x) = \sum_{j=0}^{\min\{n-1,k\}} c_{k,j} x^{n-j}\partial^{k-j}\varphi(x) + \sum_{j=n}^{k} c_{k,j}\partial^{k-j}\varphi(x), \qquad \forall x\in\R.
\]
It follows that $\partial^k {\tilde\varphi}(0)=0$ for all $k\in\{0,\ldots,n-1\}$, hence ${\tilde\varphi}\in\widetilde{\Psi}_{n}(\R)$. Moreover, if $\varphi\in\widetilde{\Psi}_N(\R)$, then $\partial^k {\tilde\varphi}(0)=0$ for all $k\in\{0,\ldots,N+n-1\}$, hence ${\tilde\varphi}\in\widetilde{\Psi}_{N+n}(\R)$. This proves~(1).

Now suppose that $m\in\{1,\ldots,N\}$ and $\eta\in\widetilde{\Psi}_N(\R)$. The function ${\tilde\eta}$ defined in~(2) satisfies the requirements of a Schwartz function, except possibly in a neighbourhood of the origin, because $\eta\in\cS(\R)$. The Paley--Wiener Theorem guarantees that $\eta$ has a  holomorphic extension to the entire complex plane, since $\widehat{\eta}$ is compactly supported. Therefore, there exists $\epsilon>0$ and a sequence $(a_j)_{j\in\N_0}$ such that the power series $a_0 + \sum_{j=1}^\infty a_jx^j$ converges to $\eta(x)$ for all $x\in[-\epsilon,\epsilon]$. The assumption that $\eta\in\widetilde{\Psi}_N(\R)$ implies that $a_j=0$ for all $j\in\{0,\ldots,N-1\}$, hence $a_{N}x^{N-m}+\sum_{j=N+1}^\infty a_jx^{j-m}$ converges to ${\tilde\eta}(x)$ for all $x\in[-\epsilon,\epsilon]$, and ${\tilde\eta}\in\cS(\R)$. Moreover, if $m\in\{1,\ldots,N-1\}$, then this also shows that $\partial^k {\tilde\eta}(0)=0$ for all $k\in\{0,\dots,N-m-1\}$. This proves~(2) provided that $\widehat{{\tilde\eta}}$ is compactly supported.

To show that $\widehat{{\tilde\eta}}$ is compactly supported when $m\in\{1,\ldots,N\}$, choose $\delta>0$ such that $\widehat{\eta}$ is supported in $[-\delta,\delta]$. It is enough to show that for each ${k}\in\{1,\dots,m\}$, there exist constants $c_{{k},0},c_{{k},1},\ldots,c_{{k},{k}-1}$ such that
\begin{equation}\label{eq: ind.ass}
\partial^{m-{k}}\widehat{{\tilde\eta}}(y) =
\begin{cases}
\displaystyle\sum_{j=0}^{{k}-1}c_{{k},j} y^{{k}-1-j}\int_{-\delta}^y x^j\widehat{\eta}(x)\, d x, & \quad\text{if}\quad |y|\leq\delta;\\
0,& \quad\text{if}\quad |y|>\delta,
\end{cases}
\end{equation}
since this proves that $\widehat{{\tilde\eta}}$ is compactly supported in $[-\delta,\delta]$ by setting ${k}=m$.

We prove~\eqref{eq: ind.ass} by induction. For $k=1$, since $\eta(x)=x^m{\tilde\eta}(x)$, we have $\widehat{\eta}=\partial^m\widehat{{\tilde\eta}}$, and so $\partial^{m-1}\widehat{{\tilde\eta}}(y) = \int_{-\infty}^y \widehat{\eta}(x)\, dx$. This shows that~\eqref{eq: ind.ass} holds for $k=1$, since $\widehat{\eta}$ is supported in $[-\delta,\delta]$ and $\int_{-\infty}^{\infty} \widehat{\eta}(x)\, dx=\eta(0)=0$. Next, assume that~\eqref{eq: ind.ass} holds for some ${k}={l}\in\{1,\dots,m-1\}$. Note that $\partial^{m-({l}+1)}\widehat{{\tilde\eta}}(y) = \int_{-\infty}^y \partial^{m-{l}}\widehat{{\tilde\eta}}(x)\, dx$. When $y<-\delta$, then $\partial^{m-({l}+1)}\widehat{{\tilde\eta}}(y)= 0$ by~\eqref{eq: ind.ass}. When $y\geq-\delta$, then we use~\eqref{eq: ind.ass} to obtain
\begin{align*}
\partial^{m-({l}+1)}\widehat{{\tilde\eta}}(y) &= \int_{-\delta}^{\min\{y,\delta\}} \Bigg(\sum_{j=0}^{{l}-1}c_{{l},j} x^{{l}-1-j}\int_{-\delta}^x w^j\widehat{\eta}(w)\, d w\Bigg) d x \\
&= \sum_{j=0}^{{l}-1}c_{{l},j} \int_{-\delta}^y \bigg(\int_{w}^{\min\{y,\delta\}} x^{{l}-1-j}\, d x \bigg) w^j\widehat{\eta}(w)\, d w \\
&= \sum_{j=0}^{{l}-1} \frac{c_{{l},j}}{{l}-j} \bigg( \min\{y,\delta\}^{{l}-j} \int_{-\delta}^y w^j\widehat{\eta}(w)\, d w 
- \int_{-\delta}^y w ^{{l}}\widehat{\eta}(w)\, d w\bigg).
\end{align*}
This shows that~\eqref{eq: ind.ass} holds for ${k}={l}+1$, since $\widehat{\eta}$ is supported in $[-\delta,\delta]$ and $\int_{-\infty}^\infty w^j\widehat{\eta}(w)\, d w = \partial^j\eta(0) = 0$ for all $j\in\{0,\ldots,N-1\}$. We then conclude that~\eqref{eq: ind.ass} holds for each ${k}\in\{1,\dots,m\}$. This completes the proof.
\end{proof}

We use a proof of Auscher and Martell~\cite[Theorem~2.3(b)]{AuscherMartell2007} to show that polynomial off-diagonal estimates are stable under composition. This allows us to combine the off-diagonal estimates for the $\Psi(S^o_\theta)$ class in~\eqref{eq:PfcOD} with those for the $\widetilde\Psi(\R)$ class in~\eqref{mn}. We use the notation $\langle \alpha \rangle = \min \{\alpha, 1\}$ and $\langle \frac{\alpha}{0} \rangle = 1$ when $\alpha>0$.

\begin{Lem}\label{Lem: ODestComp}
Suppose that $C,$ $\alpha >0$. If $\{T_t\}_{t>0}$ and $\{S_t\}_{t>0}$ are collections of operators in $\mathcal{L}(L^2(\cV))$ such that
\[
\|\ca_E T_t \ca_F\| \leq C\langle t/\rho(E,F)\rangle^{\alpha}
\quad\text{and}\quad
\|\ca_E S_t \ca_F\| \leq C\langle t/\rho(E,F)\rangle^{\alpha}
\]
for all $t>0$ and all measurable sets $E, F\subseteq M$, then there exists $\widetilde{C}>0$ such that
\[
\|\ca_E T_tS_s \ca_F\| \leq \widetilde{C} \langle \max\{s,t\}/\rho(E,F)\rangle^{\alpha}
\]
for all $s, t > 0$ and all measurable sets $E, F\subseteq M$.
\end{Lem}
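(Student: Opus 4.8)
The plan is to follow the standard splitting argument for composing off-diagonal estimates, which is essentially the trick in \cite[Theorem~2.3(b)]{AuscherMartell2007}: insert a decomposition $I = \ca_G + \ca_{G^c}$ of the identity between $T_t$ and $S_s$, where $G$ is adapted to the distance $\rho(E,F)$, chosen so that on $G$ the good decay comes from the estimate for $T_t$ and on $G^c$ it comes from the estimate for $S_s$.

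First I would dispose of the degenerate case. If $\rho(E,F)=0$, then $\langle \max\{s,t\}/\rho(E,F)\rangle = 1$ by the convention $\langle \alpha/0\rangle = 1$, and the conclusion is immediate since $\|\ca_E T_tS_s\ca_F\| \leq \|T_t\|\,\|S_s\| \leq C^2$, the bound $\|T_t\|\leq C$ following from the hypothesis with $E$, $F$ replaced by $M$ (as $\langle t/\rho(M,M)\rangle = 1$). So assume $d:=\rho(E,F)>0$ and set $G = \{x\in M : \rho(x,E)\geq d/2\}$, which is closed, hence measurable, since $x\mapsto\rho(x,E)$ is $1$-Lipschitz; write $G^c = \{x : \rho(x,E)<d/2\}$. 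The two key distance estimates are $\rho(E,G)\geq d/2$, directly from the definition of $G$, and $\rho(G^c,F)\geq d/2$, from the triangle inequality: for $x\in G^c$ and $y\in F$, pick $e\in E$ with $\rho(x,e)<d/2$ (possible because $\rho(x,E)$ is an infimum strictly below $d/2$) and bound $\rho(x,y)\geq\rho(e,y)-\rho(e,x)\geq d - d/2$. Since $\ca_G + \ca_{G^c} = I$ on $L^2(\cV)$,
\[
\ca_E T_t S_s \ca_F = (\ca_E T_t \ca_G)(\ca_G S_s \ca_F) + (\ca_E T_t \ca_{G^c})(\ca_{G^c} S_s \ca_F),
\]
and I would estimate the first term using the good bound $\|\ca_E T_t\ca_G\| \leq C\langle t/\rho(E,G)\rangle^\alpha \leq C\langle 2t/d\rangle^\alpha$ together with the trivial bound $\|\ca_G S_s\ca_F\| \leq C$, and the second term symmetrically using $\|\ca_E T_t\ca_{G^c}\|\leq C$ and $\|\ca_{G^c} S_s\ca_F\| \leq C\langle 2s/d\rangle^\alpha$. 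This gives $\|\ca_E T_t S_s\ca_F\| \leq C^2(\langle 2t/d\rangle^\alpha + \langle 2s/d\rangle^\alpha) \leq 2C^2\langle 2\max\{s,t\}/d\rangle^\alpha$, using that $\langle\cdot\rangle^\alpha$ is nondecreasing.

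Finally I would absorb the factor $2$ inside the bracket via $\langle 2r\rangle = \min\{2r,1\}\leq 2\min\{r,1\} = 2\langle r\rangle$, obtaining the claim with $\widetilde{C} = 2^{\alpha+1}C^2$. I do not expect a genuine obstacle here; the only points requiring a little care are the verification that $\rho(G^c,F)\geq d/2$ (the triangle inequality step above) and bookkeeping with the convention $\langle\alpha/0\rangle = 1$ so that the case $\rho(E,F)=0$ is handled cleanly at the outset.
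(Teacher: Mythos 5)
Your proof is correct and essentially the same as the paper's: the paper splits the identity with $\ca_{\widetilde{E}}+\ca_{M\setminus\widetilde{E}}$ where $\widetilde{E} = \{x : \rho(x,E)\leq\rho(E,F)/2\}$, which is your $G^c$ up to the boundary, and uses the same two distance estimates to extract the decay from one factor and the trivial operator bound from the other in each term. The only differences are cosmetic: you handle the degenerate case $\rho(E,F)=0$ and the constant bookkeeping explicitly, which the paper leaves implicit.
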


\begin{proof}
Let $E, F\subseteq M$ denote measurable sets. The measure on $M$ is Borel with respect to the metric topology, so the set $\widetilde{E} = \{x\in M: \rho(x,E)\leq \rho(E,F)/2\}$ is closed and hence measurable. The result follows by writing
\[
\|\ca_E T_tS_s \ca_F\|
= \|\ca_E T_t(\ca_{\widetilde{E}}+\ca_{M\setminus {\widetilde{E}}})S_s \ca_F\|
\leq \|T_t\|\|\ca_{\widetilde{E}}S_s \ca_F\| + \|\ca_E T_t\ca_{M\setminus {\widetilde{E}}}\| \|S_s\|
\]
for all $s, t >0$, since $\rho(\widetilde{E},F)\geq \rho(E,F)/2$ and $\rho(E,M\setminus {\widetilde{E}}) \geq \rho(E,F)/2$.
\end{proof}

The following off-diagonal estimates are used to prove Propositions~\ref{Prop: MainEst} and~\ref{Prop: AMcRLem5.2FPS}.

\begin{Lem}\label{Lem: ODestSchwartz+Psi}
Suppose that $D$ is a self-adjoint operator on $L^2(\cV)$ and the group $(e^{itD})_{t\in\R}$ has finite propagation speed. If $m,n,N\in\N$ and $\delta,\sigma,\tau>0$ satisfy
\[
m\leq N,\quad m<\tau,\quad n<\sigma\quad\text{and}\quad \delta\in(0,\sigma-n),
\]
then for each $\eta\in\widetilde\Psi_N(\R)$ and $\psi\in\Psi_{\sigma}^\tau(S_\theta^o)$, there exists $C>0$ such that
\begin{equation}\label{odG}
\|\ca_E(\eta_t\psi_s)(D )\ca_F\| \leq C
  \begin{cases}
    (s/t)^{n} \langle t/\rho(E,F)\rangle^{\sigma-{n}-\delta},
    &\textrm{if } 0<s\leq t; \\
    (t/s)^{m} \langle s/\rho(E,F)\rangle^{\sigma+{m}-\delta},
    &\textrm{if } 0<t\leq s,
  \end{cases}
\end{equation}
for all measurable sets $E, F \subseteq M$.
\end{Lem}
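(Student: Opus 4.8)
The plan is to reduce each case of~\eqref{odG} to a composition of one $\widetilde{\Theta}(\R)$-class operator at scale $t$ and one $\Psi(S^o_\theta)$-class operator at scale $s$, and then to combine the \emph{sharp} exponential off-diagonal bound of Lemma~\ref{Lem: ODestSchwartz} (which comes from finite propagation speed) with the polynomial bound~\eqref{eq:PfcOD} by means of the composition Lemma~\ref{Lem: ODestComp}. Throughout I would use that $D$ satisfies~\eqref{H1}--\eqref{H3} with $\omega=0$ and $m=1$ by Lemma~\ref{Lem: ODestRes}, that the Borel functional calculus is multiplicative, and the elementary fact that $e^{-a}\lesssim\langle a^{-1}\rangle^{\beta}$ for every fixed $\beta>0$ and every $a>0$ (since $a^{\beta}e^{-a}$ is bounded on $(0,\infty)$); the latter turns any bound $\|\ca_E\varphi_t(D)\ca_F\|\lesssim e^{-\rho(E,F)/t}$ into $\|\ca_E\varphi_t(D)\ca_F\|\lesssim\langle t/\rho(E,F)\rangle^{\beta}$ for whatever exponent $\beta$ the argument needs.

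\emph{Case $0<s\le t$.} Here I would absorb a power of $z$ from $\psi$ into $\eta$: set $\psi^{(n)}(z):=z^{-n}\psi(z)$ on $S^o_\theta\cup\{0\}$ and $\widetilde\eta(x):=x^{n}\eta(x)$ on $\R$. Because $n<\sigma$, the bound $|\psi(z)|\lesssim\min\{|z|^{\sigma},|z|^{-\tau}\}$ shows $\psi^{(n)}\in\Psi^{\tau+n}_{\sigma-n}(S^o_\theta)$, while Lemma~\ref{Lem: SchwartzMoments}(1) gives $\widetilde\eta\in\widetilde{\Psi}_{N+n}(\R)\subseteq\widetilde{\Theta}(\R)$. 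Since $\eta(tz)\psi(sz)=(s/t)^{n}\,\widetilde\eta(tz)\,\psi^{(n)}(sz)$ for $z\ne0$ and both sides vanish at $z=0$, multiplicativity of the functional calculus yields $(\eta_t\psi_s)(D)=(s/t)^{n}\,\widetilde\eta_t(D)\,\psi^{(n)}_s(D)$. Lemma~\ref{Lem: ODestSchwartz} bounds $\|\ca_E\widetilde\eta_t(D)\ca_F\|$ by a constant multiple of $e^{-\rho(E,F)/t}$, hence of $\langle t/\rho(E,F)\rangle^{\sigma-n-\delta}$, and~\eqref{eq:PfcOD} applied with $f\equiv1$ to $\psi^{(n)}$ — legitimate precisely because $\delta\in(0,\sigma-n)$ — bounds $\|\ca_E\psi^{(n)}_s(D)\ca_F\|$ by $\langle s/\rho(E,F)\rangle^{\sigma-n-\delta}$. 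Lemma~\ref{Lem: ODestComp} with $\alpha=\sigma-n-\delta$ then gives $\|\ca_E\widetilde\eta_t(D)\psi^{(n)}_s(D)\ca_F\|\lesssim\langle\max\{s,t\}/\rho(E,F)\rangle^{\sigma-n-\delta}=\langle t/\rho(E,F)\rangle^{\sigma-n-\delta}$, and multiplying by $(s/t)^{n}$ is the first line of~\eqref{odG}.

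\emph{Case $0<t\le s$.} Symmetrically, I would absorb a power of $z$ from $\eta$ into $\psi$: set $\psi^{[m]}(z):=z^{m}\psi(z)$ and $\bar\eta(x):=x^{-m}\eta(x)$, the latter extended to $x=0$ as in Lemma~\ref{Lem: SchwartzMoments}(2). Because $m<\tau$ one has $\psi^{[m]}\in\Psi^{\tau-m}_{\sigma+m}(S^o_\theta)$, and because $m\le N$, Lemma~\ref{Lem: SchwartzMoments}(2) gives $\bar\eta\in\widetilde{\Theta}(\R)$. From $\eta(tz)\psi(sz)=(t/s)^{m}\,\bar\eta(tz)\,\psi^{[m]}(sz)$ one gets $(\eta_t\psi_s)(D)=(t/s)^{m}\,\bar\eta_t(D)\,\psi^{[m]}_s(D)$. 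As before, Lemma~\ref{Lem: ODestSchwartz} bounds $\|\ca_E\bar\eta_t(D)\ca_F\|$ by $\langle t/\rho(E,F)\rangle^{\sigma+m-\delta}$, and~\eqref{eq:PfcOD} applied to $\psi^{[m]}$ — legitimate since $0<\delta<\sigma-n<\sigma+m$ — bounds $\|\ca_E\psi^{[m]}_s(D)\ca_F\|$ by $\langle s/\rho(E,F)\rangle^{\sigma+m-\delta}$; Lemma~\ref{Lem: ODestComp} with $\alpha=\sigma+m-\delta$ and $t\le s$ produces the factor $\langle s/\rho(E,F)\rangle^{\sigma+m-\delta}$, and multiplying by $(t/s)^{m}$ is the second line of~\eqref{odG}.

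I do not anticipate a serious obstacle: once the auxiliary functions $\psi^{(n)},\widetilde\eta$ and $\psi^{[m]},\bar\eta$ are identified, the rest is a mechanical assembly of Lemmas~\ref{Lem: ODestSchwartz}, \ref{Lem: ODestComp}, \ref{Lem: SchwartzMoments} and estimate~\eqref{eq:PfcOD}. The one point demanding care is the exponent bookkeeping — seeing that the single exponent handed to Lemma~\ref{Lem: ODestComp} must be exactly $\sigma-n-\delta$ in the first case and $\sigma+m-\delta$ in the second, which is what pins down the hypotheses $n<\sigma$, $m<\tau$, $m\le N$ and $\delta\in(0,\sigma-n)$ — together with the structural observation that one genuinely needs the $\widetilde{\Theta}(\R)$ factor (with its exponential, hence arbitrarily-fast polynomial, decay coming from finite propagation speed) rather than a second $\Psi(S^o_\theta)$ factor, since otherwise the composition could not close at the stated exponents.
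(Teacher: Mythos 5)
Your proof is correct and coincides with the paper's own argument: the same factorisation $\eta_t\psi_s=(s/t)^{n}\widetilde\eta_t\psi^{(n)}_s$ (resp.\ $(t/s)^{m}\bar\eta_t\psi^{[m]}_s$), the same invocations of Lemma~\ref{Lem: SchwartzMoments} to place $\widetilde\eta,\bar\eta$ in the $\widetilde\Theta(\R)$ scale, and the same combination of Lemma~\ref{Lem: ODestSchwartz}, estimate~\eqref{eq:PfcOD}, and Lemma~\ref{Lem: ODestComp} with the exponents $\sigma-n-\delta$ and $\sigma+m-\delta$. You merely make explicit the step $e^{-a}\lesssim\langle a^{-1}\rangle^{\beta}$ which the paper uses tacitly; otherwise the two proofs are identical up to notation.
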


\begin{proof}
Let $E, F\subseteq M$ denote measurable sets. Suppose that $0<s\leq t$ and define
\[
{\tilde\eta}(x)=x^{n}\eta(x)\ \forall x\in \R,\quad {\tilde\psi}(z)=z^{-{n}}\psi(z)\ \forall z \in S^o_\theta \quad\text{and}\quad {\tilde\psi}(0)=0.
\]
The function ${\tilde\eta}$ is in $ \widetilde\Psi_{N+{n}}(\R)$ by Lemma~\ref{Lem: SchwartzMoments}, so Lemma~\ref{Lem: ODestSchwartz} implies that
\begin{equation}\label{odF}
\|\ca_E{\tilde\eta}_t(D)\ca_F\| \lesssim e^{-\rho(E,F)/t} \lesssim \langle t/\rho(E,F)\rangle^{\sigma-{n}-\delta}.\end{equation}
The function ${\tilde\psi}$ is in $\Psi_{\sigma-{n}}^{\tau+{n}}(S^o_\theta)$, so~\eqref{eq:PfcOD} implies that
\begin{equation}\label{odP}
\|\ca_E {\tilde\psi}_s(D ) \ca_F\| \lesssim \langle s/\rho(E,F) \rangle^{\sigma-{n}-\delta}.
\end{equation}
We combine~\eqref{odF} and~\eqref{odP} using Lemma~\ref{Lem: ODestComp} to obtain
\begin{equation}\label{odC}
\|\ca_E {\tilde\eta}_t(D){\tilde\psi}_s(D ) \ca_F\|
\lesssim \langle t/\rho(E,F) \rangle^{\sigma-{n}-\delta}
\end{equation}
when $0<s\leq t$. The $B^\infty(\R)$ functional calculus is an algebra homomorphism and $\eta_t\psi_s = (s/t)^{n} {\tilde\eta}_t{\tilde\psi}_s$ on $\R$, where both ${\tilde\eta}_t$ and ${\tilde\psi}_s$ are in $B^\infty(\R)$. Therefore, we have $(\eta_t\psi_s)(D) = (s/t)^{n} {\tilde\eta}_t(D){\tilde\psi}_s(D)$,  and so~\eqref{odC} implies~\eqref{odG} when $0<s\leq t$. 

Now suppose that $0<t\leq s$ and define
\[
{\tilde{\tilde\eta}}(x)=x^{-{m}}\eta(x)\ \forall x \in \R\setminus\{0\},\ {\tilde{\tilde\eta}}(0)=\lim_{x\rightarrow 0} x^{-m} \eta(x) \ \text{and}\ {\tilde{\tilde\psi}}(z)=z^{m}\psi(z) \ \forall z\in S^o_\theta \cup \{0\}.
\]
The function ${\tilde{\tilde\eta}}$ is in $\widetilde\Theta(\R)$ by Lemma~\ref{Lem: SchwartzMoments}, since $m\leq N$ (note that ${\tilde{\tilde\eta}}(0)=\partial^m\eta(0)/m!$ and so we may have $\tilde{\tilde\eta}(0) \neq 0$ when $m=N$). Lemma~\ref{Lem: ODestSchwartz} then implies that $\|\ca_E{\tilde{\tilde\eta}}_t(D)\ca_F\|\lesssim e^{-\rho(E,F)/t}$. The function ${\tilde{\tilde\psi}}$ is in $\Psi_{\sigma+{m}}^{\tau-{m}}(S^o_\theta)$, so~\eqref{eq:PfcOD} implies that $\|\ca_E {\tilde{\tilde\psi}}_s(D) \ca_F\| \lesssim \langle s/\rho(E,F)\rangle^{\sigma+{m}-\delta}$. We also have $\eta_t\psi_s = (t/s)^{m} {\tilde{\tilde\eta}}_t{\tilde{\tilde\psi}}_s$ on $\R$, so by writing $(\eta_t\psi_s)(D)=(t/s)^{m} {\tilde{\tilde\eta}}_t(D){\tilde{\tilde\psi}}_s(D)$ and using Lemma~\ref{Lem: ODestComp} to combine the two preceding estimates, we obtain \eqref{odG} when ${0<t\leq s}$.
\end{proof}

\section*{Acknowledgements}
Auscher and Morris thank the Mathematical Sciences Institute at the Australian National University for support during the project. Auscher was also partially supported by the ANR project ``Harmonic Analysis at its Boundaries'', ANR-12-BS01-0013-01. McIntosh was supported by the Australian Research Council. We thank Lashi Bandara, Charles Batty, Andrea Carbonaro, Steve Hofmann and Pierre Portal for helpful conversations that improved the paper.

\end{document}